\documentclass[a4paper,9pt,reqno]{amsart}
\usepackage{hyperref}
\hypersetup{
    colorlinks,
    citecolor=blue,
    filecolor=blue,
    linkcolor=blue,
    urlcolor=blue
}
\usepackage{amsmath}%
\usepackage{amsfonts}%
\usepackage{amssymb}%
\usepackage{graphicx}
\usepackage{enumitem}
\usepackage{comment}
%
\newtheorem{theorem}{Theorem}[section]
\theoremstyle{plain}

\newtheorem{corollary}[theorem]{Corollary}

\newtheorem{definition}[theorem]{Definition}
\newtheorem{example}[theorem]{Example}

\newtheorem{lemma}[theorem]{Lemma}

\newtheorem{proposition}[theorem]{Proposition}
\newtheorem{remark}{Remark}

\numberwithin{equation}{section}

\newcommand{\R}{\mathbb{R}}
\newcommand{\N}{\mathbb{N}}

\newcommand{\E}{\mathbb{E}}
\newcommand{\Var}{\mathrm{Var}}

\newcommand{\T}{\mathcal{T}}

\newcommand{\s}{X}

\newcommand{\mesh}{\delta}
\newcommand{\m}{\delta}

\begin{document}
\title[Stein's method  with bespoke derivatives]{One-dimensional Stein's method  with bespoke derivatives}
\author{Gilles Germain}
\address[]
{Gilles Germain, Université libre de Bruxelles, Département de Mathématique, Campus Plaine, Boulevard du Triomphe
CP210, B-1050 Brussels}
\email[]{gilles.germain@ulb.be}%
\author{Yvik Swan}
\address[]{Yvik Swan, Université libre de Bruxelles, Département de Mathématique, Campus Plaine, Boulevard du Triomphe
CP210, B-1050 Brussels}
\email[]{ yvik.swan@ulb.be}

\begin{abstract}
We introduce a  version of  Stein's method of comparison of operators specifically tailored to the problem of   bounding the Wasserstein-1 distance between continuous and discrete distributions on the real line. Our approach rests on a new family  of weighted discrete derivative operators, which we call bespoke derivatives.  
We also propose new bounds on the derivatives of the solutions of  Stein equations for Integrated Pearson random variables; this is a crucial step in Stein's method. We apply our result to several examples, including the Central Limit Theorem, Polya-Eggenberger urn models, the empirical distribution of the ground
state of a many-interacting-worlds harmonic oscillator, the stationary distribution for the number of genes in the Moran model, 
and the stationary distribution of the Erlang-C system. Whenever our bounds can be compared with bounds from the literature, our constants are sharper. 
\end{abstract}
\maketitle

\section{Introduction and main results} 
\label{sec:1}

 In this paper, we use   Stein's method   to propose new   bounds on the Wasserstein distance 
$W_1(X,Z):=\int_{-\infty}^{+\infty} | \mathbb{P}[X \le z] - \mathbb{P}[Z \le z] | dz$ (also known as the  Kantorovitch or $L^1$ distance)  
between the laws of real random variables $X$ and $Z$ under the assumption that  $X$ is  discrete   and $Z$ is continuous. The use of Stein's method for this purpose   has attracted much attention. For instance \cite{chen2023optimal} use the \emph{zero-bias transform} to study  normal approximation for the empirical distribution of the ground state of a many-interacting-worlds harmonic oscillator.   \cite{chatterjee2011nonnormal}   use  \emph{exchangeable pairs} to study   exponential approximation of   the uniform measure on the set of eigenvalues of the Bernoulli-Laplace Markov chain. In the same vein,  \cite{dobler2015stein} also uses an {exchangeable pairs} approach to study  the distance between the beta   and  the Polya-Eggenberger distribution, while  \cite{goldstein2013stein} tackle the exact same problem using a  \emph{comparison of operators approach}. This last reference is the  main source of inspiration behind the present work, and here we propose to revisit their approach using a new family of discrete derivative operators which we call ``bespoke derivatives''.

The   main theoretical  contribution of the present paper is Theorem \ref{theo main intro} below, which contains a  general abstract bound permitting to tackle all the aforementioned problems, and many more.   Before stating the theorem, we state the  standing  Assumption under which we shall work for the entire paper. 

\medskip

\noindent \textbf{Assumption.} The   real random variable $Z\sim q$ has  probability density function $q : \R \to \R^+$ such that  $\{x \in \mathbb R \, | \, q(x)>0 \}$ is an interval with interior $(a, b)$ for $- \infty \le a < b \le +\infty$ and   $q$  is absolutely continuous on every compact interval included in $(a,b)$; also  $\E[|Z|]<\infty$, i.e.\ the identity function $\mathrm{Id}$ belongs to  $L^1(q)$.

The   real random variable $X\sim p$ is  a discrete random  variable with mass function $(p_i)_{i \in I} \subset (0, 1)$ on a finite (resp.,  countable)  set of points $(x_i)_{i  \in I} \subset \overline{(a,b)}$ such that  $x_i < x_{i+1}$   for  $I = \{0, 1, \ldots, \ell \}$ ($\ell \in \mathbb N$) (resp.,  $I = \mathbb N$). 

\medskip

 Under this  Assumption (which,  along with the notations it contains, shall not be repeated) we obtain the following result.  

 \begin{theorem}[Uniform Wasserstein bounds]
\label{theo main intro} 

Let  $w : (a, b) \to \mathbb R^+_0$ be two times differentiable, set  $s(x) = (q(x) w(x))'/q(x)$ for all $x \in (a, b)$ and define  the weight sequence 
 $(\pi_i)_{i \in I} =: (\pi(x_i))_{i \in I} $  through
\begin{align}
\label{eq_ai3-intro}
 \pi_i&=\frac{1}{p_i w_i}\sum_{j=0}^i \left((x_j-x_i)s_j+w_j\right)p_j, 
\end{align}
with $w_j = w(x_j)$ and $s_j = s(x_j)$ for $j \in I$.

Let  $\tau_q$ be the Stein kernel of $q$ (see equation \eqref{steinkk}). We make the following assumptions on $q$ and $w$:  $\tau_q/w$ is bounded on $(a, b)$,     $s\in L^1(q)$, $s$ is strictly decreasing and continuous,  $\E[s(Z)]=0$. We also assume that  $w\in L^1(p)$,   $\pi w\in L^1(p)$,  and 
\begin{align}
\label{condition1-intro}
\E[s(X)]=0\quad\text{and}\quad \E\left[X s(X) + w(X) \right]=0.
\end{align}   
Then  
\begin{align}
W_1(X,Z)&  \leq C^0_q(w)  \, \mathbb{E}\left[|\pi(X)| \mesh^+(X) + |1-\pi(X)| \mesh^-(X) \right]\nonumber\\
&\quad +C_q^1(w)  \, \mathbb{E}\left[|\pi(X)| (\mesh^+(X))^{2} + |1-\pi(X)| (\mesh^-(X))^2 \right]\label{elegantform-intro}
\end{align}
where  
\begin{itemize}
    \item   $\mesh^-(x_i) = x_{i}-x_{i-1}$ for all $i \in I\setminus \{0\}$, and $\mesh^+(x_i) = x_{i+1}-x_i$  for all $i \in I$ (or $I \setminus \{\ell\}$ when $\ell \in \mathbb N$);
\item $C_q^0(w)$ and $C_q^1(w)$ are  absolute constants depending only on $q$ and $w$ (see Section~\ref{sn:abstractwass}).
\end{itemize} \end{theorem}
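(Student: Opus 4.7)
The natural route is Stein's method of comparison of operators, with the weights $\pi_i$ engineered so that a discrete Stein identity closes exactly for $X$. Associated to $q$ and the gauge $w$ is the continuous Stein operator
\[
\mathcal{L} f(x) = w(x) f'(x) + s(x) f(x),
\]
which, by integration by parts against $q$ together with $\mathbb{E}[s(Z)]=0$, satisfies $\mathbb{E}[\mathcal{L} f(Z)] = 0$ on a rich class of test functions. For each $1$-Lipschitz $h$, let $f_h$ solve the Stein equation $\mathcal{L} f_h = h - \mathbb{E}[h(Z)]$; the paper's bounds on Stein solutions for integrated Pearson laws (announced in the abstract and established earlier) deliver sup-norm estimates of the form $\|f_h'\|_\infty \lesssim C_q^0(w)$ and $\|f_h''\|_\infty \lesssim C_q^1(w)$, modulo factors absorbed from $w$ and the bounded quotient $\tau_q/w$. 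These are the constants on the right of \eqref{elegantform-intro}.

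\textbf{Bespoke discrete operator.} The next step is to construct a discrete companion
\[
\tilde{\mathcal{L}} f(x_i) = w_i \tilde D f(x_i) + s_i f(x_i), \qquad \tilde D f(x_i) = \pi_i \frac{f(x_{i+1}) - f(x_i)}{\delta^+(x_i)} + (1 - \pi_i) \frac{f(x_i) - f(x_{i-1})}{\delta^-(x_i)},
\]
satisfying the exact identity $\mathbb{E}[\tilde{\mathcal{L}} f(X)] = 0$ on sufficiently regular $f$. I would derive the weights by summation by parts: rewriting $\sum_i p_i s_i f(x_i)$ against the first differences $f(x_{i+1}) - f(x_i)$ produces cumulative sums of $s_j p_j$, and balancing these against $\sum_i p_i w_i \tilde D f(x_i)$ forces a recurrence whose closed-form solution is exactly the partial sum in \eqref{eq_ai3-intro}. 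The two scalar moment conditions \eqref{condition1-intro} are precisely the boundary conditions making the Abel telescoping close at the right endpoint (finite case); in the countable case $w, \pi w \in L^1(p)$ provides the tail-vanishing supplement.

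\textbf{Error control.} With both identities in hand,
\[
\mathbb{E}[h(X)] - \mathbb{E}[h(Z)] = \mathbb{E}[\mathcal{L} f_h(X)] = \mathbb{E}[(\mathcal{L} - \tilde{\mathcal{L}}) f_h(X)] = \mathbb{E}\bigl[w(X)(f_h'(X) - \tilde D f_h(X))\bigr].
\]
I would expand $f_h(x_i \pm \delta^\pm(x_i))$ by Taylor around $x_i$ and split the per-point error into two scales: a first-order ``mean-value'' piece bounded by $\|f_h'\|_\infty \cdot [|\pi_i|\delta^+(x_i) + |1-\pi_i|\delta^-(x_i)]$, and a second-order Taylor-remainder piece bounded by $\|f_h''\|_\infty \cdot [|\pi_i|(\delta^+(x_i))^2 + |1-\pi_i|(\delta^-(x_i))^2]$. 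Absolute values appear because $\pi_i$ is not assumed to lie in $[0,1]$. Taking expectation, absorbing $w$ into the Stein constants, and taking the supremum over $1$-Lipschitz $h$ yields exactly \eqref{elegantform-intro}.

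\textbf{Main obstacle.} The genuinely delicate step is the Abel-summation argument that both identifies \eqref{eq_ai3-intro} and certifies closure of the boundary terms. In the countable case this is subtle: one must juggle cumulative sums of $s_j p_j$ whose total vanishes by $\mathbb{E}[s(X)]=0$, against cumulative sums of $(x_j - \cdot)s_j p_j + w_j p_j$ whose convergence relies on $\mathbb{E}[Xs(X)+w(X)]=0$ and the integrability hypotheses on $w$ and $\pi w$. A secondary technical point is to keep the scales $\delta$ and $\delta^2$ cleanly separated in the final bound, so that the constants $C_q^0(w)$ and $C_q^1(w)$ each correspond to a distinct regularity bound on $f_h$ rather than collapsing into a single coarser factor; this is what produces the ``sharper constants'' advertised relative to the literature.
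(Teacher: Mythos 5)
Your high-level plan is the paper's: solve the continuous Stein equation, build a discrete companion with bespoke weights so that $\E[\widetilde{\mathcal L}f(X)]=0$, and reduce to bounding $\E[w(X)(f_h'(X)-\Delta^\pi f_h(X))]$. The summation-by-parts derivation of $\pi_i$ and its boundary closure via \eqref{condition1-intro} also match Proposition~\ref{prop a}. The gap is entirely in the error-control step.

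You identify $C_q^0(w)$ with a sup norm of $f_h'$ and $C_q^1(w)$ with a sup norm of $f_h''$, and you propose to close the estimate by Taylor expansion with a remainder controlled by $\|f_h''\|_\infty$. This does not match the theorem. The paper's constants (Definition~\ref{defn:2.10}) are $C_q^0(w)=\tfrac12\big(\sup_h\|w'f_h'\|_\infty+\sup_h\|(f_h'w)'\|_\infty\big)$ and $C_q^1(w)=\tfrac16\sup_h\|f_h'\|_\infty\|w''\|_\infty$ --- there is no $f_h''$ anywhere, and the $\delta$-order term is not $\|f_h'\|_\infty$ but a composite involving derivatives of both $w$ and $f_h'$. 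The proposed naive Taylor step is also incoherent in two respects: first, the ``first-order mean-value piece bounded by $\|f_h'\|_\infty\delta$'' does not exist, since the first-order term of the Taylor expansion is exactly reproduced by the difference quotient and the leading error is $O(\delta)$ with a coefficient involving the modulus of continuity of $f_h'$, not $\|f_h'\|_\infty$; second, the error sits multiplied by $w(x_i)$ before taking expectation, so what you would actually need to control is $\|w f_h''\|_\infty$, a quantity the paper never bounds and which is often genuinely unavailable when $w=\tau_q$ is unbounded (gamma, beta, etc.).

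The step you are missing is an algebraic split, not an analytic estimate: writing
\[
w(x_i)\big(f'(x_i)-f'(y)\big)=\big[f'(x_i)w(x_i)-f'(y)w(y)\big]+\big(w(y)-w(x_i)\big)f'(y)
\]
and representing each bracket as an integral of $(f'w)'$, then further splitting $w'(t)f'(y)=w'(y)f'(y)+(w'(t)-w'(y))f'(y)$. This fuses $w$ into the derivative so that the only sup norms that appear are $\|(f_h'w)'\|_\infty$, $\|w'f_h'\|_\infty$, and $\|w''\|_\infty\|f_h'\|_\infty$, which \emph{are} tractable (these are exactly what Lemmas~\ref{prop borne}--\ref{prop borne 2} bound for Integrated Pearson targets). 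Without this rewriting your argument does not close, and with it you recover precisely the two-scale structure of \eqref{elegantform-intro} but with the correct constants.
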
 

Note that the sequence of weights defined in \eqref{eq_ai3-intro} satisfies $\pi_0 = 1$ and, if $\ell < \infty$, $\pi_\ell = 0$. We have the following immediate consequence of Theorem \ref{theo main intro}. 

\begin{corollary}\label{cor:regsp}
If furthermore $x_i - x_{i-1} = \mesh>0$ for all $i \in I\setminus \{0\}$    then 
\begin{align} \label{simplllfir}
    W_1(X, Z) \le \left(C_q^0(w) \mesh +C^1_q(w)\mesh^2 \right) \E[|\pi(X)|   + |1-\pi(X)|].
\end{align}
If moreover  $0 \le \pi_i    \le 1$  for all $i \in I$ then 
\begin{align} \label{simplll}
    W_1(X, Z) \le C_q^0(w) \mesh +C^1_q(w)\mesh^2.
\end{align}
\end{corollary}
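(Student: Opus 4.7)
The plan is to derive both inequalities from Theorem~\ref{theo main intro} by direct substitution, exploiting the equi-spacing hypothesis together with the boundary identities $\pi_0 = 1$ and (when $\ell<\infty$) $\pi_\ell = 0$ recorded in the remark immediately preceding the corollary.

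First I would plug $\mesh^{\pm}(x_i) = \mesh$ into the abstract bound \eqref{elegantform-intro}. The only subtlety is that $\mesh^-(x_0)$ is not defined, and neither is $\mesh^+(x_\ell)$ in the finite case. However, at $x_0$ the prefactor of $\mesh^-(X)$ is $|1-\pi_0| = 0$, and at $x_\ell$ the prefactor of $\mesh^+(X)$ is $|\pi_\ell| = 0$, so these undefined quantities never actually contribute to the expectations and it is legitimate to replace every occurrence of $\mesh^{\pm}(X)$ by $\mesh$ (respectively $\mesh^2$ for the squared terms). Pulling the constants $\mesh$ and $\mesh^2$ out of the expectations and collecting the two expectations into one immediately yields \eqref{simplllfir}.

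For the second inequality \eqref{simplll}, I would use the additional hypothesis $0 \le \pi_i \le 1$ to drop the absolute values: pointwise one has $|\pi(X)| + |1-\pi(X)| = \pi(X) + (1-\pi(X)) = 1$, hence $\E[|\pi(X)| + |1-\pi(X)|] = 1$, and substituting this into \eqref{simplllfir} gives \eqref{simplll}.

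I do not anticipate a genuine obstacle: the corollary is a bookkeeping consequence of Theorem~\ref{theo main intro}. The only point that requires a moment of care is the boundary argument above, ensuring that replacing the undefined endpoint values $\mesh^-(x_0)$ and $\mesh^+(x_\ell)$ by $\mesh$ costs nothing because the corresponding weights $|1-\pi_0|$ and $|\pi_\ell|$ vanish.
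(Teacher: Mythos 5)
Your proof is correct and is exactly the bookkeeping argument the paper has in mind: the paper labels the corollary an ``immediate consequence'' of Theorem~\ref{theo main intro} without writing out a proof, and your substitution of $\mesh^{\pm}(x_i)=\mesh$ into \eqref{elegantform-intro} together with the observation that $\pi_0=1$ and $\pi_\ell=0$ (stated just before the corollary) kill the undefined boundary terms is the natural filling-in. The reduction $\E[|\pi(X)|+|1-\pi(X)|]=1$ under $0\le\pi_i\le1$ is likewise the intended step for \eqref{simplll}.
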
 

 As we shall soon see, the condition $0 \le \pi_i \le 1$ for all $i \in I$  is  satisfied in many (but not all) interesting situations. Since the mesh $\mesh$ ought to be small    in all our intended applications (indeed we approximate discrete distributions by continuous ones) this simple corollary indicates  that our results seem to exhibit the correct behavior.

Condition \eqref{condition1-intro} is  a standardisation  relating certain moments of $X$ to those of $Z$. In practice this condition comes at no loss of generality though 
in specific applications it may result in Theorem \ref{theo main intro} giving  slightly different bounds than those from the literature. 
For comparison purposes, it  will   be useful to make note of the inequality  
\begin{equation}\label{rema1}
W_1(b_1X+c_1, b_2X+c_2) 
\leq |b_1-b_2|\E|X|+|c_1-c_2|
\end{equation}
which will allow us to switch between standardizations. 
 
The constants $C_q^0(w)$ and $C_q^1(w)$ in \eqref{elegantform-intro}  are  a form of  ``Stein  factors'' which are  familiar features of bounds obtained through  Stein's method; see Definition \ref{defn:2.10}.  We will study  these constants in Section~\ref{sn:abstractwass}. For instance, we will show that when $q = \varphi$ is the standard normal distribution then $C_\varphi^0(1) = 1$ and $C_\varphi^1(1) = 0$, whereas when $q$ is the density of an exponential distribution with parameter $\lambda>0$ and $w(x) = x/\lambda$ then  $C_q^0(w) = 3/2$ for all $\lambda>0$ and $C_q^1(w) = 0$. 
%
%

Finally, the weights $(\pi_i)_{i \in I}$ defined in \eqref{eq_ai3-intro} are the main novelty of the paper; we will study them in detail  in Sections \ref{sec:2} and \ref{sn:applications}. We will give many examples where these sequences are perfectly explicit, so that bounds such as \eqref{elegantform-intro} and its offshoots can be easily computed in all these contexts. For cases where  the weights are not available explicitly, sufficient conditions guaranteeing $0 \le \pi\le 1$ will be provided in Section \ref{sn:more}.

Obviously, the quality of bound \eqref{elegantform-intro} hinges on the choice of the function $w$ which is crucial both for the value of the constant $C_q^j(w)$ $j=0,1$ and for the behavior of the weighting sequence $(\pi_i)_{i \in I}$.   
%
When $Z\sim q$ has finite mean, a natural choice of test function $w$ in Theorem \ref{theo main intro} is the \emph{Stein (covariance) kernel} defined for $x \in (a, b)$ as 
\begin{equation}\label{steinkk}
    \tau_q(x) = \frac{1}{q(x)} \int_x^b (u - \mathbb{E}[Z]) q(u) du
\end{equation}
and 0 elsewhere. 
 The function $\tau_q$ is well-studied within the context of Stein's method, see e.g.\ \cite{ernst2020first, saumard2019weighted} for  recent overviews. 
  In particular, since  in this case it holds that $s(x) = {(\tau_q(x)q(x))'}/{q(x)} = -(x - \mathbb{E}[Z])$,  the assumptions on $q$ in Theorem \ref{theo main intro} are trivially satisfied, and conditions  \eqref{condition1-intro} then read as $\mathbb E[X] = \mathbb E[Z]$ and $\mbox{Var}[X] = \mathbb{E}[\tau_q(X)]$. 
  We note that when $Z$ is Pearson distributed with finite variance, its Stein kernel is polynomial of degree at most 2 (see e.g.\ \cite{stein1986approximate}). Such laws are called integrated Pearson (see \cite{Afendras}) and direct computations show that then, for $w = \tau_q$, condition \eqref{condition1-intro} simplifies to 
\begin{equation*}
\E[X]=\E[Z]\quad\text{and}\quad\Var[X]=\Var[Z].
\end{equation*}
We shall also obtain very simple bounds on the constants $C_q^j(\tau_q), j = 0, 1$ in this case.

\medskip 

To showcase the kind of results  we can obtain from  Theorem \ref{theo main intro}, we propose the following examples (many more examples will be provided in Section \ref{sn:applications}).

\begin{example}[P\'olya-Eggenberger urn models]\label{ex:1}
Let $Y$ be distributed according to the P\'olya-Eggenberger  distribution with parameters $\alpha$, $\beta$, $m$ and $n$. Then with $X = Y/n$ and $Z \sim \mathrm{Beta}(\alpha/m, \beta/m)$   it holds that 
\begin{equation*}
    W_1(X, Z) \le \frac{C(\alpha, \beta, m)}{n}
\end{equation*}
with $C(\alpha, \beta, m)$ a function of the parameters  given in Proposition \ref{prop:ourgoldrein} below. This bound is of the exact same form and order as the one in    \cite{goldstein2013stein}; only our constants differ. See  Proposition~\ref{prop:ourgoldrein} and Remark \ref{rem:polyaeg} for more details.  
\end{example}

\begin{example}[Bernoulli-Laplace Markov chain]\label{ex:2}
    Let $Y$ be a random eigenvalue chosen randomly from the set 
eigenvalues of the
Bernoulli-Laplace Markov chain. Then with  $X = \frac{n}{2} Y + 1$ and $Z \sim \mathrm{Exp}(1)$ it holds that 
\begin{equation*}
    W_1(X, Z)  \le\frac{3\sqrt2}{\sqrt{n}} + \frac{3}{n} \le \frac{5}{\sqrt n}.
\end{equation*}
This is to be compared with \cite{chatterjee2011nonnormal} where a Wasserstein bound in exactly the same problem  was obtained (through an entirely different take on Stein's method) with the same rate but a  constant equal to 12. See Proposition \ref{prop:chatt} for more details.
\end{example}

    \begin{example}[Normal approximation of a hypergeometric distribution]\label{ex:3}
    Let $Y$ be the number of marked balls in a sample of  $r$ balls taken from  an urn with $N$ balls, $n\le r$ of which are marked; $Y$ is a hypergeometric random variable with mean $\mu = rn/N$ and variance $\sigma^2= nr(N-r)(N-n)/((N-1)N^2)$ (we suppose that $N > n+r$).  Let $Z \sim \mathcal{N}(0, 1)$. Then with $X = (Y - \mu)/\sigma$ it holds that $$W_1(X,Z) \le \frac{1}{\sigma},$$
which concurs with the known fact (already mentioned in  Feller \cite{feller1945normal}) that the standardized hypergeometric  and standard Gaussian become indistinguishable if and only if  $\sigma \to \infty$, see e.g.\ \cite{nicholson1956normal, lahiri2007berry}. More details are provided in Example \ref{exa:hyperggg}. Quantitative bounds for normal approximation of a hypergeometric distribution are available  in  Kolmogorov distance (i.e. supremum norm of the difference of cumulative distribution functions) in the symmetric case \cite{mattner2018normal}.  
 
\end{example}

\medskip 

  The rest  paper is organised as follows. After introducing some notations in  Section~\ref{sec:notations}, in Section   \ref{sec:2.3} we set up our version of Stein's method of comparison of operators with bespoke derivatives (introducing the different  operators, the Stein equations, etc.) leading to a general abstract bound stated in Theorem \ref{th:steincomp}.  In Section \ref{sec:besp} we explain the origin of the weights $(\pi_i)_{i \in I}$
  and  in Section \ref{ex:xomepisnorm} we provide explicit expressions of these weights  when the target distribution $q$  is the standard Gaussian and $w= 1$ (which is the Stein kernel in this case). In Section  \ref{sn:abstractwass} we define and study the Stein  factors $C_q^j(w), j = 0, 1$, and also provide explicit bounds when the target $q$ is Integrated Pearson and $w = \tau_q$ is the corresponding Stein kernel. In Section \ref{sn:proof} we prove the  Wasserstein bound from  our main Theorem~\ref{theo main intro}. In Section~\ref{sn:more} we provide conditions on $q$ and $w$ under which $0 \le \pi \le 1$. Finally, on a side note which may be of independent interest, in Section~\ref{sn:buildconvsequ} we discuss how our approach can be used to provide discrete approximations of continuous distributions under certain constraints.   In Section~\ref{sn:applications} we propose detailed computations in a variety of examples: exponential approximation in Section \ref{sn:expapp} (this includes Example \ref{ex:2}), gamma approximation in Section \ref{sec:gamma}, beta approximation in Section \ref{sn:beta} (this includes Example \ref{ex:1}),  normal approximation in Section \ref{sn:normalapprox} (this includes Example \ref{ex:3}), and asymptotics of queuing systems in Section \ref{sn:queue}. Many proofs are given in the text, but the less illustrative ones are relegated to  Appendix \ref{sec:proofs}.

\section{Stein's method with bespoke derivatives}
\label{sec:2}

Here and throughout, we assume that  $X \sim p, Z\sim q$ satisfy our Assumption; we also instate the notations from this Assumption. This will not be repeated. 

\subsection{Notations}
\label{sec:notations}

Let $\mathcal X := \{x_i, i \in I\}$; recall that this is an ordered set. Let $\mesh^-_i=\mesh^-(x_i) = x_{i}-x_{i-1}$ for all $i \in I\setminus \{0\}$, and $\mesh^+_i=\mesh^+(x_i) = x_{i+1}-x_i$  for all $i \in I$ (or $I \setminus \{\ell\}$ when $\ell \in \mathbb N$).  
Let $\mathcal X^\star$ be the collection of all  functions $f : \mathcal X \to \mathbb R$, which  we identify with the collection of column vectors  $f =(f_i)_{i \in I} = (f(x_i))_{i \in I}$ (the vector is an  infinite sequence  if $I = \mathbb{N}$).  For $f, g$ two elements of $\mathcal X^\star$ we also define $fg$ as the column vector with components $(fg)_i = f(x_i) g(x_i)$, and likewise for $f+g, f/g$ etc.   Finally, any function $f: \R \to \R$ is associated uniquely with a sequence $f\in \mathcal X^\star$ (we use the same notation for the function and the sequence) defined by $f_i = f(x_i)$ for $i \in I$. 
%
 %
  The   forward and backward derivatives are  then  defined on $\mathcal X^\star$ as 
\begin{align*}
    & (\Delta^+ f)_i = \Delta^+ f(x_i) = \frac{f_{i+1} - f_i}{\mesh^+_i}   \mbox{ and } (\Delta^- f)_i = \Delta^- f(x_i) = \frac{f_{i} - f_{i-1}}{\mesh^-_i}  \mbox{ for } i \in I
\end{align*}
with the convention that $(\Delta^- f)_0= \Delta^- f(x_0) = 0$, and  $(\Delta^+ f)_\ell = \Delta^+ f(x_\ell) = 0$ if $\ell < +\infty$.  
    For any sequence   $\pi = (\pi_i)_{i \in I} \in \mathcal X^\star$ we introduce    the weighted  derivative operators  \begin{equation*}
        (\Delta^{\pi} f)_i = \pi_i (\Delta^+ f)_i + (1-\pi_i) (\Delta^-f)_i \mbox{ for } i \in I. 
    \end{equation*}
  It will be useful to consider the action of $\Delta^{\pi}$ on $\mathcal{X}^\star$ as matrix multiplication, with 
\begin{equation*}
\Delta^{\pi} = \big( \Delta^{\pi}_{i, j}\big)_{i, j \in I} = \begin{pmatrix}
-\frac{\pi_0}{\m_1} & \frac{\pi_0}{\m_1} &0&  \cdots & 0 & 0 \\
- \frac{1-\pi_1}{\m_1} & \frac{1-\pi_1}{\m_1} - \frac{\pi_1}{\m_2}& \frac{\pi_1}{\m_2} &  \ldots & 0 & 0 \\
0 & - \frac{1-\pi_2}{\m_2} & \frac{1-\pi_2}{\m_2} - \frac{\pi_2}{\m_3} &\ldots & 0 & 0 \\
\vdots & \vdots &  \vdots &\ddots & \vdots & \vdots \\
0 & 0 & 0 & \cdots & -\frac{1-\pi_\ell}{\m_\ell} & \frac{1-\pi_\ell}{\m_\ell} 
\end{pmatrix}
\end{equation*}
(the matrix is of infinite size when $I = \mathbb N$). 
Then 
$\Delta^{\pi}f$ is the column vector obtained by right-multiplying $ \Delta^\pi$  with the column vector $f$, i.e. it is the vector with components $(\Delta^\pi f)_j = \sum_{k \in I} \Delta^\pi_{j k} f_k$ (where each summation is of at most three terms) for $j \in I$.  We also write $(\Delta^\pi)^tf $ for the vector obtained by pre-multiplying $f$ with $(\Delta^\pi)^t$, the transpose of $\Delta^\pi$.

\subsection{Comparison of weighted  Stein operators}
\label{sec:2.3}
Consider the following linear  operators, which we call Stein operators.

\begin{definition}[Weighted  Stein operators] Let $w: (a, b) \to \R_0^+$ be a twice differentiable function and also let $w = (w_i)_{i \in I}$ the corresponding element of $\mathcal X^\star$. Let $s(x) = (w(x) q(x))'/q(x)$ for $x \in (a, b)$. 

\begin{enumerate}
    \item Let $\pi  \in \mathcal X^\star$. The $\pi, w$-Stein operator for $p$  is the linear operator sending $f \in \mathcal X^\star$ onto $(\mathcal T_{p, w, \pi}f) \in \mathcal X^\star$ with components 
    \begin{equation} \label{eq:steinopd}
    (\mathcal T_{p, w, \pi}f)_i =w_i  (\Delta^{\pi} f)_i-\frac{((\Delta^{\pi})^t(w  p ))_i}{p_i} f_i
\end{equation}
for all $i \in I$.
\item The $w$-density Stein operator for $q$ is  the linear operator sending differentiable functions $f$ onto 
\begin{equation} \label{eq:steinopc}
    \T_{q, w} f(x) 
    = w(x) f'(x) + s(x) f(x)
\end{equation}
 for $x \in (a, b)$. 
\end{enumerate}

\end{definition}

\begin{remark}
Weighted Stein operators for continuous distributions have already received a lot of attention and our operator \eqref{eq:steinopc} is studied e.g.\ in \cite{ley2017distances}. Our discrete operator \eqref{eq:steinopd} is connected to  \cite{yang2018goodness} (see their Theorem 3). 
\end{remark}

The following two technical results follow from standard arguments (see Appendix \ref{sec:proofs} for the proofs). 
\begin{proposition}[Stein classes] \label{prop:pstinope}
$\mbox{}$
Let $\pi, w \in \mathcal X^\star$ be such that  $w \in L^1(p)$ and $w \pi \in L^1(p)$ (i.e.\  $\E[|w(X)|] := \sum_{i\in I}  |w_i| p_i<\infty$ and $\E[|\pi(X)w(X)|] := \sum_{i\in I} |w_i \pi_i| p_i <\infty$). Then $\mathbb E [\mathcal T_{p, w,  \pi} f(X)] = 0$ for all {bounded}  $f \in  \mathcal X^\star$. 
\end{proposition}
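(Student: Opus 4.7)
The plan is to expand $\mathbb E[\mathcal T_{p,w,\pi}f(X)]$ using the definition \eqref{eq:steinopd} and recognize the resulting expression as a telescoping sum. Writing $\langle u,v\rangle = \sum_{i\in I} u_i v_i$, the expansion gives
\begin{equation*}
\mathbb E[\mathcal T_{p,w,\pi}f(X)] = \sum_{i\in I} p_i(\mathcal T_{p,w,\pi}f)_i = \langle wp,\Delta^{\pi} f\rangle - \langle (\Delta^{\pi})^t(wp),f\rangle,
\end{equation*}
and the right-hand side is zero by the very definition of the transpose, provided the sums are absolutely convergent and can be rearranged. In the finite case $I=\{0,\dots,\ell\}$ this is elementary finite-dimensional linear algebra: the boundary conventions $(\Delta^- f)_0 = 0$ and $(\Delta^+ f)_\ell = 0$ are already encoded in the matrix $\Delta^{\pi}$, so the matrix identity $(wp)^T\Delta^{\pi}f = ((\Delta^{\pi})^t(wp))^T f$ is an immediate transposition.

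For the countable case $I=\mathbb N$, I would make the cancellation explicit via summation by parts. Setting $a_i = p_i w_i \pi_i/\delta^+_i$ and $b_i = p_i w_i(1-\pi_i)/\delta^-_i$ (with $b_0:=0$, and $a_\ell := 0$ when $\ell<\infty$), a short algebraic manipulation yields, for every $i\in I$,
\begin{equation*}
p_i(\mathcal T_{p,w,\pi}f)_i = (a_i f_{i+1} - a_{i-1}f_i) + (b_{i+1}f_i - b_i f_{i-1}),
\end{equation*}
which is manifestly telescoping. Summing over $i=0,\dots,N$ collapses the sum to the boundary expression $S_N = a_N f_{N+1} + b_{N+1}f_N$. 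In the finite case, the boundary conventions $a_\ell=0$ and $b_{\ell+1}=0$ give $S_\ell = 0$ directly. In the countable case, $S_N$ must be shown to tend to $0$ as $N\to\infty$, which is where the integrability assumptions $w\in L^1(p)$ and $w\pi \in L^1(p)$, together with $\|f\|_\infty<\infty$, come into play: they force the tails $p_N|w_N\pi_N|$ and $p_N|w_N(1-\pi_N)|$ to decay fast enough to kill the boundary.

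The telescoping identity is mechanical, so the main obstacle is the countable case: the bookkeeping needed to pass from the summability hypotheses to the vanishing of $a_N f_{N+1} + b_{N+1}f_N$ depends on the geometry of the support $(x_i)_{i\in I}$ through the inverse mesh $1/\delta^+_N$. The cleanest route is probably a truncation-plus-dominated-convergence argument on the partial sums $S_N$, using $\|f\|_\infty$ to reduce everything to tail estimates on $p_i|w_i|$ and $p_i|w_i\pi_i|$.
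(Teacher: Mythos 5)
Your telescoping approach is genuinely different from the paper's. The paper's proof works at the level of the double series: it argues that $\sum_{i,j}\left|f(x_j)\,\Delta^{\pi}_{ij}\,w(x_i)\,p_i\right|<\infty$ and then invokes Fubini's theorem for infinite series to swap the order of summation, obtaining $\langle wp,\Delta^{\pi}f\rangle=\langle(\Delta^{\pi})^t(wp),f\rangle$ directly and hence $\mathbb E[\mathcal T_{p,w,\pi}f(X)]=0$. That route never isolates a boundary term at all. Your route instead makes the telescoping explicit, writing $p_i(\mathcal T_{p,w,\pi}f)_i=(a_if_{i+1}-a_{i-1}f_i)+(b_{i+1}f_i-b_if_{i-1})$ and reducing the claim to the vanishing of the boundary term $S_N=a_Nf_{N+1}+b_{N+1}f_N$. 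The telescoping identity you derive is correct, and in the finite case the two routes are essentially the same elementary statement about transposes; the difference only shows when $I=\mathbb N$.

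In the countable case there is a genuine gap. You assert that the hypotheses $w\in L^1(p)$ and $w\pi\in L^1(p)$, together with boundedness of $f$, ``force the tails $p_N|w_N\pi_N|$ and $p_N|w_N(1-\pi_N)|$ to decay fast enough to kill the boundary.'' That is not the right quantity: $a_N=p_Nw_N\pi_N/\delta^{+}_N$ carries an inverse mesh factor $1/\delta^{+}_N$, and if the support points accumulate (so $\delta^{+}_N\to 0$), the decay of $p_N w_N\pi_N$ alone need not force $a_N\to0$; the stated $L^1(p)$ hypotheses simply do not control $a_N$ or $b_{N+1}$. You did flag this dependence on the inverse mesh, but then defer it to ``bookkeeping'' and a ``truncation-plus-dominated-convergence'' argument without saying what would dominate the boundary term, so the proposal does not actually close the step it identifies as the crux. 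The paper's Fubini strategy confronts the same $1/\delta$ factors, but they appear inside the absolute-convergence check for the double series rather than as an explicit boundary limit, so no separate passage-to-the-limit in the partial sums is needed. Either way, some control of the inverse mesh has to enter: your writeup does not supply it, and would need to, before the countable case is complete.
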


\begin{proposition}[Stein equations] \label{prop:qsteinope}
Let $\mathrm{Lip}(1)$ be  the  collection of functions $h : \mathbb R \to \mathbb R$ such that  $h$ is Lipschitz with a.e.\ derivative bounded by 1. For each $h \in \mathrm{Lip}(1)$,   the function defined on $(a, b)$ by 
\begin{equation}\label{eq_coucou}
    f_h(x)=\frac{1}{p(x)w(x)}\int_a^x (h(y)-\E[h(Z)])p(y) dy
\end{equation}
is solution to  $\mathcal T_{q, w} f_h (x)= h(x) - \mathbb E[h(Z)]$ for all $ x \in (a, b)$. Moreover, if {$s\in L^1(q)$ is strictly decreasing, continuous, if  $\E[s(Z)]=0$} and if $\tau_q/w$ is bounded on $(a, b)$ (recall the function $\tau_q$ from \eqref{steinkk}) then  $f_h$ is also bounded on $(a, b)$; in particular $\mathbb E [\mathcal T_{p, w, \pi} f_h(X)] = 0$ for all $h \in \mathrm{Lip}(1)$. 
\end{proposition}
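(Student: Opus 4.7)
My plan is to split the proof into three parts: direct verification of the Stein equation, proof of boundedness of $f_h$, and application of Proposition~\ref{prop:pstinope}. (I read the $p$ in the definition of $f_h$ as a typo for $q$, since otherwise $p(x)$ is not defined for $x\in(a,b)$.)

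First, I would verify the Stein equation by direct differentiation. Set $F(x)=\int_a^x(h(y)-\E[h(Z)])q(y)\,dy$, so that $f_h(x)=F(x)/(q(x)w(x))$. Since $s(x)q(x)=(q(x)w(x))'$ by definition, the quotient rule gives
\[
f_h'(x)=\frac{h(x)-\E[h(Z)]}{w(x)}-\frac{s(x)f_h(x)}{w(x)},
\]
which rearranges to $w(x)f_h'(x)+s(x)f_h(x)=h(x)-\E[h(Z)]$, i.e.\ $\mathcal T_{q,w}f_h(x)=h(x)-\E[h(Z)]$.

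The heart of the argument is the boundedness of $f_h$. Since $\int_a^b (h(y)-\E[h(Z)])q(y)\,dy=0$, I can split
\[
F(x)=(1-F_Z(x))\!\int_a^x h(u)q(u)\,du - F_Z(x)\!\int_x^b h(v)q(v)\,dv,
\]
where $F_Z$ is the c.d.f.\ of $Z$; rewriting $1-F_Z(x)=\int_x^b q(v)\,dv$ and $F_Z(x)=\int_a^x q(u)\,du$ and applying Fubini yields the symmetric form
\[
F(x)=\int_a^x\!\int_x^b\bigl(h(u)-h(v)\bigr)q(u)q(v)\,du\,dv.
\]
For $u<x<v$ we have $|h(u)-h(v)|\le v-u$ since $h\in\mathrm{Lip}(1)$, so
\[
|F(x)|\le\int_a^x\!\int_x^b (v-u)q(u)q(v)\,du\,dv.
\]
Splitting this last integral into two pieces and using $\int_a^b(y-\E[Z])q(y)\,dy=0$ together with the two equivalent forms of the Stein kernel, namely $\tau_q(x)q(x)=\int_x^b(y-\E[Z])q(y)\,dy=-\int_a^x(y-\E[Z])q(y)\,dy$, the double integral collapses neatly to $\tau_q(x)q(x)$. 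Dividing by $q(x)w(x)$ gives $|f_h(x)|\le\tau_q(x)/w(x)$, which is bounded on $(a,b)$ by assumption. The hypotheses that $s\in L^1(q)$ is strictly decreasing and continuous with $\E[s(Z)]=0$ are what make $\tau_q$ finite and the above boundary identities legal (in particular $\E[s(Z)]=0$ is equivalent to $wq$ having matching boundary limits at $a$ and $b$, so no stray boundary terms appear when relating $F(x)$ to $\tau_q(x)q(x)$).

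Once $f_h$ is shown to be bounded on $(a,b)$, the final conclusion $\E[\mathcal T_{p,w,\pi}f_h(X)]=0$ for all $h\in\mathrm{Lip}(1)$ is immediate from Proposition~\ref{prop:pstinope}, whose hypotheses $w\in L^1(p)$ and $w\pi\in L^1(p)$ are part of the standing assumptions of the proposition. The only genuinely non-routine step is the symmetrization leading to the $\tau_q$ identity in the boundedness argument; everything else is bookkeeping.
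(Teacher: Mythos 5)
Your proof is correct and self-contained, and you correctly spotted the typo: the density in \eqref{eq_coucou} must be $q$, not $p$, since $f_h$ is defined on $(a,b)$. The verification of the Stein equation by the quotient rule is routine and right. The substance of your argument is the symmetrization
\[
\int_a^x \bigl(h(y)-\E[h(Z)]\bigr)q(y)\,dy \;=\; \int_a^x\!\int_x^b \bigl(h(u)-h(v)\bigr)q(u)q(v)\,du\,dv,
\]
followed by the Lipschitz bound $|h(u)-h(v)|\le v-u$ and the identity $\int_a^x\!\int_x^b(v-u)q(u)q(v)\,du\,dv=\tau_q(x)q(x)$ (which you verify correctly by splitting $v-u=(v-\E Z)+(\E Z-u)$), giving $|f_h|\le\tau_q/w$. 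The paper itself gives no argument here: it simply cites Proposition~2.14 and Remark~2.15 of D\"obler (2015) and remarks that the quantity $S$ there is $\tau_q/w$ in the present notation. Your submission therefore supplies the content the paper delegates to that reference, and it is the standard route underlying that bound; the conclusion $\E[\mathcal T_{p,w,\pi}f_h(X)]=0$ via Proposition~\ref{prop:pstinope} is then immediate.

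One small inaccuracy in your commentary, which does not affect the validity of the proof. Your boundedness argument in fact uses only that $\tau_q/w$ is bounded together with the standing assumption $\E|Z|<\infty$ (so that $\tau_q$ and $\E[h(Z)]$ are finite); the hypotheses that $s\in L^1(q)$ is strictly decreasing, continuous and satisfies $\E[s(Z)]=0$ play no role in the identity $|F(x)|\le\tau_q(x)q(x)$, which follows solely from $\int_a^b(u-\E Z)q(u)\,du=0$. In particular $\E[s(Z)]=0$ is not needed to control stray boundary terms in your computation, since no integration by parts against $qw$ occurs. Those extra hypotheses are stated in the proposition because they are used downstream (notably in Lemma~\ref{prop borne} and in the discrete existence analysis), not because they are needed for boundedness of $f_h$ as such; your proof in fact establishes boundedness under slightly weaker assumptions.
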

 Propositions \ref{prop:pstinope} and \ref{prop:qsteinope} are all we need to perform Stein's method of comparison of operators, through our next result.
 
 \begin{theorem}\label{th:steincomp} Under the previous notations and assumptions it holds that     \begin{equation*}
    W_1(X, Z) = \sup_{h \in \mathcal H} \left|  \E\left[ w(X) \left(f_h'(X)-\Delta^{\pi} f_h(X)\right)+\left(s(X)+\frac{(\Delta^{\pi})^t (w  p )}{p}(X)\right)f_h(X)\right]\right|.
\end{equation*}
 \end{theorem}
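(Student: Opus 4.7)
The plan is to combine the Kantorovich--Rubinstein dual representation of $W_1$ with the two Stein identities derived in Propositions~\ref{prop:pstinope} and \ref{prop:qsteinope}. Concretely, I would start from
$$W_1(X,Z) = \sup_{h \in \mathrm{Lip}(1)} \bigl|\E[h(X)] - \E[h(Z)]\bigr|,$$
which is the standard dual formulation, applicable here since both $X$ and $Z$ have finite first moment under the standing Assumption. Taking $\mathcal H = \mathrm{Lip}(1)$, it then suffices to show that for each $h \in \mathrm{Lip}(1)$ the difference $\E[h(X)] - \E[h(Z)]$ equals the expression inside the supremum in the statement, since the absolute value and sup can then be pulled outside.

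Fix such an $h$ and let $f_h$ be the Stein solution furnished by Proposition~\ref{prop:qsteinope}, so that $w(x) f_h'(x) + s(x) f_h(x) = h(x) - \E[h(Z)]$ on $(a, b)$; under the hypotheses made on $q$, $s$ and $\tau_q/w$, this $f_h$ is also bounded. Evaluating the Stein equation at $X$ and taking expectations (legitimate since $f_h$ is bounded, $w\in L^1(p)$, and $\E[s(X)]$ is well-defined by assumption) produces
$$\E[h(X)] - \E[h(Z)] = \E\bigl[w(X)f_h'(X) + s(X) f_h(X)\bigr].$$

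Next I would invoke Proposition~\ref{prop:pstinope} applied to the bounded $f_h$: since $w,\pi w \in L^1(p)$, it gives $\E[\mathcal{T}_{p,w,\pi} f_h(X)] = 0$, which unpacks as the identity
$$\E\!\left[w(X)\, \Delta^{\pi} f_h(X)\right] = \E\!\left[\frac{(\Delta^{\pi})^{t}(wp)(X)}{p(X)} f_h(X)\right].$$
Subtracting this vanishing identity from the previous display and regrouping (the $f_h'$-term with the $\Delta^{\pi}f_h$-term, and the $s f_h$-term with the transposed-operator term) yields exactly
$$\E[h(X)] - \E[h(Z)] = \E\!\left[w(X)\bigl(f_h'(X) - \Delta^{\pi} f_h(X)\bigr) + \Bigl(s(X) + \tfrac{(\Delta^{\pi})^{t}(wp)(X)}{p(X)}\Bigr) f_h(X)\right],$$
and taking absolute values followed by the supremum over $h\in\mathrm{Lip}(1)$ closes the argument.

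The argument is conceptually straightforward: the only genuine obstacle is bookkeeping in order to guarantee that every expectation above converges absolutely, so that the rearrangement and the application of both Stein identities are legitimate. This is precisely what the integrability hypotheses $w, \pi w \in L^1(p)$, $s \in L^1(q)$, the moment conditions in \eqref{condition1-intro}, and the boundedness of $\tau_q/w$ on $(a,b)$ are engineered to deliver; once those are in place, the proof reduces to the telescoping of the two Stein identities against the Kantorovich--Rubinstein test class.
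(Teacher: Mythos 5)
Your proof is correct and follows essentially the same route as the paper's: both start from the Kantorovich--Rubinstein duality, substitute the Stein solution $f_h$ from Proposition~\ref{prop:qsteinope} to rewrite $\E[h(X)]-\E[h(Z)]$ as $\E[\mathcal T_{q,w} f_h(X)]$, and then subtract the vanishing quantity $\E[\mathcal T_{p,w,\pi} f_h(X)]$ from Proposition~\ref{prop:pstinope}. The only cosmetic difference is that you unpack both operators componentwise and regroup, whereas the paper writes the same telescoping more compactly as $\E[(\mathcal T_{q,w} - \mathcal T_{p,w,\pi}) f_h(X)]$ and unravels at the last step.
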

 
 \begin{proof}
    First recall the classical representation 
     for the Wasserstein distance:
\begin{equation}\label{eqwa}
    W_1(X, Z) = \sup_{h \in \mathrm{Lip}(1)} |\E[h(X)]-\E[h(Z)]|.
\end{equation} 
Next  fix    $h\in \mathrm{Lip}(1)$  and  let $f_h$ be as  given in Proposition \ref{prop:qsteinope}. Then 
\begin{align*}
\E[h(X)]-\E[h(Z)]&=\E\left[\mathcal T_{q, w}  f_h(X) \right]  = \E\left[\mathcal T_{q, w}  f_h(X) \right] - \E\left[\mathcal T_{p, w, \pi}f_h(X) \right]\\
& = \E\left[(\mathcal T_{q, w}   - \mathcal T_{p, w, \pi})f_h(X) \right]. 
\end{align*}
Unravelling the expressions for the operators we note that  
\begin{equation*}
   (\mathcal T_{q, w}   - \mathcal T_{p, w,  \pi})f  
 = w \left(f'-\Delta^{\pi} f\right)+\left(s+\frac{(\Delta^{\pi})^t (w  p )}{p}\right)f
\end{equation*}
which gives the claim. 
 \end{proof}

\subsection{Bespoke weighted derivatives} \label{sec:besp}
 We suggest to tackle the approximation of $q$ by $p$  (or vice-versa) by   fixing  $w$ then choosing $\pi$ so  that 
\begin{equation}
\label{eq_deltas}
\frac{((\Delta^{\pi})^t (w  p ))_i}{p_i}=-s_i, \mbox{ for all } i \in I
\end{equation}
(still with $s_i = s(x_i)$). 
If \eqref{eq_deltas} has a solution $\pi(= \pi(w, p, q))$ then from Theorem \ref{th:steincomp} all that remains is to bound the expression
\begin{equation}\label{eq:dDelta}
 \sup_{h \in \mathcal H}  \left| \E\left[w(X) \left(f_h'(X)-\Delta^{\pi} f_h(X)\right)\right]\right|
\end{equation}
with $f_h$ as given in Proposition \ref{prop:qsteinope}. 

In order to apply our method, it  is first necessary to determine conditions under which equation \eqref{eq_deltas} admits a solution $\pi$ such that $\pi w\in L^1(p)$. We begin by noting  how, under the assumptions of Proposition \ref{prop:pstinope},   $\pi \in \mathcal X^\star$ solves \eqref{eq_deltas} if and only if 
\begin{equation}
\label{eq stein a}
\E\left[w(X)\Delta^{\pi} f(X) \right]=-\E\left[s(X) f(X)\right]
\end{equation}
for all {bounded function} $ f\in \mathcal{X}^\star$. This gives an interpretation to the  weights $\pi$, at least  when $w = \tau_q$ is the Stein kernel of $Z$ given by \eqref{steinkk}. Indeed it then follows that   $s(x) = -(x - \mathbb{E}[{Z}])$. Since  the Stein kernel also satisfies  the integration by parts  formula 
\begin{equation}\label{steinkkibp}
    \mathbb{E}[\tau_q(Z) f'(Z)] = \mathbb{E}[(Z - \mathbb{E}[Z]) f(Z)]
\end{equation}
(supposed to hold for all smooth $f$ with at most polynomial growth),  direct comparison of   \eqref{eq stein a}    and \eqref{steinkkibp} indicates that the coefficients $\pi$ allow to tweak the discrete derivatives $\Delta^{\pi}$ in such a way that  the Stein kernel of $X$ associated to $\Delta^{\pi}$ is equal  to the Stein kernel of $Z$ (associated to the usual derivative). This is why we call derivative operator $\Delta^\pi$ a bespoke derivative. 

Our next result (see  Appendix \ref{sec:proofs} for the proof)  provides sufficient conditions for existence of solutions $\pi$ to \eqref{eq_deltas} and also gives an explicit formula for the solutions, leading to formula \eqref{eq_ai3-intro}.

\begin{proposition}
\label{prop a}
If $I$ is finite the weights $(\pi_i)_{i \in I}$ defined by  \begin{align}
\label{eq_ai3}
\pi_i&=\frac{1}{p_i w_i}\sum_{j=0}^i \left((x_j-x_i)s_j+w_j\right)p_j, \quad i \in I
\end{align}
satisfy \eqref{eq_deltas} if and only if 
\begin{align}
\label{condition1}
\E[s(X)]=0\quad\text{and}\quad \E\left[X s(X) + w(X) \right]=0.
\end{align}
If $I=\N$ the same weights $(\pi_i)_{i \in I}$  satisfy \eqref{eq_deltas}, and $\pi w \in L^1(p)$ {only} if \eqref{condition1} holds.

\end{proposition}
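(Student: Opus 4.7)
The plan is to recast equation \eqref{eq_deltas} as a first-order telescoping recursion, solve it explicitly to obtain \eqref{eq_ai3}, and then identify \eqref{condition1} with the boundary/integrability constraints.

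First, I would unpack $((\Delta^\pi)^t(wp))_i$ using the matrix form of $\Delta^\pi$ from Section \ref{sec:notations}. Introducing the auxiliary sequence $B_i := \pi_{i-1}w_{i-1}p_{i-1} + (1-\pi_i) w_i p_i$, a direct computation shows that $((\Delta^\pi)^t(wp))_i = B_i/\m_i - B_{i+1}/\m_{i+1}$ at every interior index, while the conventions $(\Delta^- f)_0 = 0$ and $(\Delta^+ f)_\ell = 0$ cause the first term to drop at $i=0$ and the second to drop at $i = \ell$ (when $\ell<\infty$). Equation \eqref{eq_deltas} is thereby equivalent to the first-order recursion $B_{i+1}/\m_{i+1} = B_i/\m_i + s_i p_i$ at interior $i$, together with the initial equation $B_1/\m_1 = s_0 p_0$ and, in the finite case, the closing equation $B_\ell/\m_\ell = -s_\ell p_\ell$.

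Telescoping from the initial condition produces $B_k/\m_k = \sum_{j=0}^{k-1} s_j p_j$; fixing the natural left-boundary value $\pi_0 = 1$ (dictated by $(\Delta^- f)_0 = 0$), the definition of $B_i$ becomes a first-order recursion on $\pi_i w_i p_i$ whose inductive solution is exactly \eqref{eq_ai3}. In the finite case, comparing the telescoped value $B_\ell/\m_\ell = \sum_{j=0}^{\ell-1} s_j p_j$ with the closing equation $B_\ell/\m_\ell = -s_\ell p_\ell$ forces $\E[s(X)] = 0$; substituting $i = \ell$ into \eqref{eq_ai3} then gives $\pi_\ell w_\ell p_\ell = \E[X s(X)] - x_\ell \E[s(X)] + \E[w(X)]$, so the symmetric right-boundary value $\pi_\ell = 0$ is, under $\E[s(X)] = 0$, equivalent to $\E[X s(X) + w(X)] = 0$. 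The two identities of \eqref{condition1} thus appear exactly as the compatibility conditions that make \eqref{eq_ai3} an honest solution of \eqref{eq_deltas} respecting the boundary conventions at both endpoints.

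For $I = \N$ the recursion has no closing equation, so \eqref{eq_ai3} automatically solves \eqref{eq_deltas}; the integrability hypothesis $\pi w \in L^1(p)$ must then play the role of the absent right endpoint. It forces $\pi_i w_i p_i \to 0$ along a subsequence, and passing to the limit in the identity $\pi_i w_i p_i = \sum_{j=0}^i x_j s_j p_j - x_i \sum_{j=0}^i s_j p_j + \sum_{j=0}^i w_j p_j$ yields the two moment identities in \eqref{condition1}. I expect this limit step to be the main technical obstacle: the three partial sums $A_i, C_i, D_i$ on the right need not all converge absolutely a priori, so one must leverage $w \in L^1(p)$ together with $s \in L^1(p)$ (implicit in the Stein-operator hypotheses) to argue that the combination $C_i - x_i A_i + D_i \to 0$ forces $A_i \to 0$ and $C_i + D_i \to 0$ separately.
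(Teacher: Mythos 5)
Your treatment of the finite case is a clean reformulation of the paper's own argument rather than a different route. Introducing $B_i=\pi_{i-1}w_{i-1}p_{i-1}+(1-\pi_i)w_ip_i$ and reading $((\Delta^\pi)^t(wp))_i$ as a first difference of $B_i/\delta_i$ telescopes exactly the three families of relations the paper writes out longhand in its Appendix proof; solving the resulting recursion from $\pi_0=1$ reproduces \eqref{eq_ai3} in both accounts. Both proofs also make the same move at the right endpoint: the consistency of the telescoped value $B_\ell/\delta_\ell=\sum_{j<\ell}s_jp_j$ with the closing row gives $\E[s(X)]=0$, while the second identity is extracted from imposing $\pi_\ell=0$ as a boundary normalisation rather than as a literal consequence of the linear system \eqref{eq_deltas}. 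You phrase this as ``respecting the boundary conventions at both endpoints'' and the paper says ``we want $\pi_\ell=0$''; it is the same step, and it is precisely what guarantees $\Delta^\pi\mathrm{Id}\equiv 1$, which is what makes the $f=\mathrm{Id}$ test function deliver the second condition.

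Where you genuinely diverge is $I=\N$, and there your plan has a gap you flag but do not close. The paper routes through \eqref{eq stein a}: under $\pi w\in L^1(p)$ the Stein identity holds for every bounded $f$, and testing with $f\equiv 1$ and with $f_n(x)=x\mathbf{1}_{[-n,n]}(x)$ and passing to the limit --- dominated convergence on $\E[w(X)\Delta^\pi f_n(X)]$, monotone convergence on $\E[f_n(X)s(X)]$, the latter legitimated by $-s$ being increasing --- produces the two identities of \eqref{condition1} separately. You instead observe that $\pi w\in L^1(p)$ forces $\pi_iw_ip_i\to 0$ (along the full sequence, in fact, not merely a subsequence: the general term of an absolutely summable series vanishes) and want to pass to the limit in $\pi_iw_ip_i=C_i-x_iA_i+D_i$ with $A_i=\sum_{j\le i}s_jp_j$, $C_i=\sum_{j\le i}x_js_jp_j$, $D_i=\sum_{j\le i}w_jp_j$. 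The trouble is that a single scalar limit cannot in general be decoupled into two moment identities. The obstruction is sharpest when $\mathcal X$ is bounded with $x_i\uparrow x^\ast<\infty$, a situation the standing Assumption permits when $I=\N$: then $A_i$, $C_i$, $D_i$ all converge and $\pi_iw_ip_i\to 0$ collapses to the single relation $\E[Xs(X)]-x^\ast\E[s(X)]+\E[w(X)]=0$, which does not split into \eqref{condition1}. When $\mathcal X$ is unbounded you would instead need a rate, $A_i=o(1/x_i)$, and $A_i\to 0$ alone does not supply one. Your sketch never invokes the monotonicity of $s$, which is the structural hypothesis the paper leans on to separate the two limits; without something playing that role the final step does not close.
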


\begin{remark}
    When $I = \N$,   $\pi w \in L^1(p)$ if  $$\lim_{i\to\infty}(\pi_{i+1}w_{i+1}p_{i+1})/(\pi_{i}w_{i}p_{i}) < 1$$
by the ratio test. 
\end{remark}
\begin{remark}
    The sequence from \eqref{eq_ai3} can be defined through the recurrence 
$\pi_0 = 1$, 
\begin{align*}
    \pi_{i} w_{i}p_{i} =  \pi_{i-1} w_{i-1} p_{i-1} +w_{i}p_{i}- (x_{i}-x_{i-1}) \sum_{j=0}^{i-1} s_j p_j, \quad i \in I\setminus\left\{0\right\}.
\end{align*}

\end{remark}
If we take $w=\tau_q$,  the conditions in  \eqref{condition1} read as 
\begin{equation}
\label{eq:w=tau}
\E[X]=\E[Z]\quad\text{and}\quad\Var[X]=\E[\tau_q(X)].
\end{equation}
This  can be further simplified when $q$ is an integrated Pearson density. Recall (see e.g.\ \cite{Afendras}) that $q$ belongs to the integrated Pearson family if  it has finite mean $m$  and its Stein kernel is  of the form $\tau_q(x)=\alpha x^2+\beta x+\gamma$.  We write $Z\sim \mathrm{IP}(m; \alpha, \beta, \gamma)$. In this case we obtain the following. 
\begin{proposition}
\label{w=tau2}
Let  $Z\sim \mathrm{IP}(m;\alpha, \beta, \gamma)$ and  take $w=\tau_q$ in Proposition \ref{prop a}. Then the conditions in \eqref{condition1} become 
\begin{equation}
\E[X]=\E[Z]\quad\text{and}\quad\Var[X]=\Var[Z].
\end{equation}
\end{proposition}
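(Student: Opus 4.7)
The plan is to start from the already-derived form of condition \eqref{condition1} when $w = \tau_q$, namely equation \eqref{eq:w=tau}: $\E[X] = \E[Z]$ together with $\Var[X] = \E[\tau_q(X)]$. The first equality is already one of the conclusions we want, so the entire task reduces to rewriting the single scalar equation $\Var[X] = \E[\tau_q(X)]$ as $\Var[X] = \Var[Z]$, using the extra information that $\tau_q$ is the quadratic $\tau_q(x) = \alpha x^2 + \beta x + \gamma$.

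First I would feed $f(z) = z$ into the Stein kernel integration-by-parts identity \eqref{steinkkibp} (which applies since the identity is assumed to hold for smooth functions of at most polynomial growth, and an integrated Pearson $Z$ with finite variance certainly permits this). This yields
\begin{equation*}
\E[\tau_q(Z)] = \E[Z(Z - m)] = \Var[Z].
\end{equation*}
Expanding the left-hand side with $\tau_q(z) = \alpha z^2 + \beta z + \gamma$ and $\E[Z^2] = \Var[Z] + m^2$ produces the algebraic identity
\begin{equation*}
(1 - \alpha)\Var[Z] = \alpha m^2 + \beta m + \gamma.
\end{equation*}

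Next I would perform the same quadratic expansion for $X$, which is purely algebraic and does not require any Stein identity for $X$:
\begin{equation*}
\E[\tau_q(X)] = \alpha\bigl(\Var[X] + \E[X]^2\bigr) + \beta\,\E[X] + \gamma.
\end{equation*}
Substituting the first condition $\E[X] = m$ into this expression and inserting it into $\Var[X] = \E[\tau_q(X)]$ gives
\begin{equation*}
(1 - \alpha)\Var[X] = \alpha m^2 + \beta m + \gamma = (1 - \alpha)\Var[Z],
\end{equation*}
the second equality coming from the Stein-kernel identity derived in the previous paragraph. Dividing by $1 - \alpha$ yields the desired $\Var[X] = \Var[Z]$, and the reverse implication is just the same chain of equalities read backward.

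The only genuinely delicate point is the divisibility by $1 - \alpha$, i.e.\ the edge case $\alpha = 1$. This does not arise in the integrated Pearson family as soon as $Z$ has finite variance, since the identity $(1-\alpha)\Var[Z] = \alpha m^2 + \beta m + \gamma$ forces $\alpha < 1$ (with $\alpha = 1$ corresponding to heavy-tailed laws where $\Var[Z]$ fails to be finite or $\tau_q(m) = 0$ makes the condition vacuous). Since the proposition's hypotheses implicitly include finite variance via the appearance of $\Var[Z]$, this poses no real obstacle; otherwise the proof is a short algebraic manipulation.
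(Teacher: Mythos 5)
Your proof is correct and follows essentially the same route as the paper: you rewrite the condition $\Var[X]=\E[\tau_q(X)]$ by expanding the quadratic Stein kernel, plug in $\E[X]=\E[Z]$, and cancel a factor $1-\alpha$ after invoking $\Var[Z]=\E[\tau_q(Z)]$; the paper does this with second moments rather than variances but the algebra is identical. Your explicit treatment of the $\alpha=1$ edge case is a small, correct addition that the paper leaves implicit.
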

Finally, we note that if  $Y$ is any discrete random variable and  $q$ is integrated Pearson then the random variable 
\begin{equation}\label{eq:autosta}
    X=\sqrt{\Var[Z]}\frac{Y-\E[Y]}{\sqrt{\Var[Y]}} +\E[Z]
\end{equation}
verifies  \eqref{condition1}.

\subsection{The weights when $Z$ is standard Gaussian} 
\label{ex:xomepisnorm} We take $q= \varphi$  the standard normal density and $w \equiv 1$.  Then \eqref{condition1} imposes $\mathbb{E}[X] = 0$ and $\mathrm{Var}[X] = 1$, and if furthermore $X = (Y-\mu)/\sigma $ with $Y$ integer valued, {$\mu=\E[Y]$ and $\sigma^2=\Var[Y]$} then \eqref{eq_ai3} becomes 
\begin{align}
\label{eq_ai3gau}
\pi_i&=\frac{1}{p_i }\sum_{j=0}^i \left(1 - \frac{j-\mu}{\sigma}\frac{j-i}{\sigma}\right)p_j, \quad i \in I. 
\end{align}
The following results hold (see the supplementary material for computations). 
\begin{enumerate}
    \item 
Let $Y \sim \mathrm{Bin}(n, t)$ for $n \in \mathbb N$ and $t \in (0, 1)$. Condition \eqref{condition1} is satisfied for  $X = (Y - nt)/\sqrt{nt(1-t)}$ so that  $\ell = n$, and $\mathcal{X} = \{ x_i = (i-nt)/\sqrt{nt(1-t)}, i = 0, \ldots, n\}$. Identity \eqref{eq_ai3gau} yields   
$$\pi_i=\frac{n-i}{n}$$ for $i = 0, \ldots, n. $ Note how $\pi_i \in [0, 1]$ for all $i = 0, \ldots, n$. Derivative  $\Delta^\pi$ was already studied, in the context of the binomial distribution, in  \cite{hillion2014natural}.
\item If  $Y \sim \mathrm{Poi}(\lambda)$ for $\lambda >0$ then we take $X = (Y - \lambda)/\sqrt{\lambda}$  so that $\ell = +\infty$, and  $\mathcal X = \{x_i=(i-\lambda)/\sqrt{\lambda}, i = 0, 1, \ldots \}$. Identity \eqref{eq_ai3gau} yields   $$\pi_i=1$$
for all $i\in\N$ so that $\Delta^\pi$ is simply the forward derivative in this case. 
\item If $Y$ follows  the negative binomial density with parameters $n$ and $t$ then we take $X = (Y - n(1-t)/t)/\sqrt{n(1-t)/t^2}$ so that $I = \mathbb{N}$, and $\mathcal X = \{x_i=(i-n(1-t)/t)/\sqrt{n(1-t)/t^2}, i = 0, 1, \ldots \}$. Identity \eqref{eq_ai3gau} yields    $$\pi_i=1+\frac{i}{n},$$
for all $i \ge 0$.
Note how $\pi_i \ge 1$ for all $i \ge 0$.
\item If $W$ follows  the hypergeometric  density with integer parameters $r, n, N$ such that $r\geq n$ and $N>n+r$ so that 
$
p_i= {\binom{n}{i} \binom{N-n}{r-i}}/{\binom{N}{r}},$ $ i = 0, \ldots, n$
then $\mu =nr/N $ and $\sigma^2= {n(1-n/N)(N-r)r/(N(N-1))}$ we take 
$X = (Y - \mu)/ \sigma$ so that $\ell = n$, and $\mathcal X = \{x_i=(i - \mu)/ \sigma, i = 0, 1, \ldots, n\}$. Identity \eqref{eq_ai3gau} yields    
\begin{equation*}
    \label{hypergeomai}
    \pi_i = \frac{(n-i)(r-i)(N i + (N-n)(N-r))}{nr(N-n)(N-r)}
\end{equation*}
for all $ i = 0, \ldots, n.$
Note that  $0 \le \pi_i \le 1$ for all $i = 0, \ldots, n$.
\item 
If $Y$ follows  the  uniform distribution on $\left\{0,\ldots,n\right\}$ then $\mu = n/2$ and $\sigma^2 = ((n+1)^2-1)/12$. We take $X = (Y - \mu)/\sigma$ so that $\ell =  n$, and $\mathcal X = \{x_i=(i-n/2)/\sqrt{((n+1)^2-1)/12}, i = 0, 1, \ldots, n \}$. Identity \eqref{eq_ai3gau} yields   $$\pi_i=\frac{(i+1)(n-i)(n-2(i-1))}{n(n+2)}$$ for $i = 0, \ldots, n$. These are neither necessarily bounded between 0 and 1 nor even positive. 
\end{enumerate}

Finally, we show that the coefficients $(\pi_i)_{i \in I}$ can  be explicitly  related to the third moment of $X$.
\begin{proposition} \label{prop:gaussthirdmom}
Let  $Y\sim p$ be a random variable on $\left\{0,\ldots,\ell\right\}$ ($\ell\in\N$) with mean $\mu$ and variance $\sigma^2$, and  let $\pi$ be given by \eqref{eq_ai3gau}. Then 
$$
\E\left[\pi\left(Y\right)\right]=\frac{1}{2}\left(1+\frac{1}{\sigma^2}\E\left[(Y-\mu)^3\right]\right).
$$
 
\end{proposition}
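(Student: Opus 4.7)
The plan is a direct computation: expand $\E[\pi(Y)]$ as a double sum using the defining formula \eqref{eq_ai3gau}, switch the order of summation, and identify the resulting expression in terms of the central moments of $Y$.

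First I would write
\begin{equation*}
\E[\pi(Y)]=\sum_{i=0}^\ell \pi_i p_i = \sum_{i=0}^\ell\sum_{j=0}^i\left(1-\frac{(j-\mu)(j-i)}{\sigma^2}\right)p_j,
\end{equation*}
and then interchange the order of summation (legitimate, as all sums are finite) to obtain
\begin{equation*}
\E[\pi(Y)]=\sum_{j=0}^\ell p_j\sum_{i=j}^\ell\left(1-\frac{(j-\mu)(j-i)}{\sigma^2}\right)=\sum_{j=0}^\ell p_j\left[(\ell-j+1)+\frac{(j-\mu)(\ell-j)(\ell-j+1)}{2\sigma^2}\right],
\end{equation*}
where I used $\sum_{i=j}^\ell(i-j)=(\ell-j)(\ell-j+1)/2$.

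Next I would introduce the change of variable $u=j-\mu$ and $c=\ell-\mu$, so that $\ell-j=c-u$ and $(\ell-j)(\ell-j+1)=c^2+c-(2c+1)u+u^2$. Multiplying by $u$ and taking expectation over $Y$ (so that $u\mapsto Y-\mu$, with $\E[u]=0$, $\E[u^2]=\sigma^2$, and $\E[u^3]$ appearing as the unknown quantity) makes the terms proportional to $u$ and $c^2 u$, $cu$ vanish, leaving
\begin{equation*}
\E\bigl[(Y-\mu)(\ell-Y)(\ell-Y+1)\bigr]=-\sigma^2-2c\sigma^2+\E[(Y-\mu)^3].
\end{equation*}
The first piece of the main sum contributes $\sum_j p_j(\ell-j+1)=\ell+1-\mu$. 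Combining these two pieces yields
\begin{equation*}
\E[\pi(Y)]=(\ell+1-\mu)+\frac{-\sigma^2-2(\ell-\mu)\sigma^2+\E[(Y-\mu)^3]}{2\sigma^2}=\tfrac12+\frac{\E[(Y-\mu)^3]}{2\sigma^2},
\end{equation*}
which is the claimed identity.

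No step is really an obstacle: the only care needed is in the algebraic expansion of $(j-\mu)(\ell-j)(\ell-j+1)$ in central-moment form so that the first and second moment contributions cancel cleanly against the simpler term $(\ell-j+1)$, leaving precisely $1/2$ plus the third-moment correction. If desired, one can present the computation more symmetrically by first noting that $\pi_\ell=0$ (as mentioned in the excerpt after Theorem \ref{theo main intro}) and writing $\E[\pi(Y)]=\sum_{i=0}^{\ell-1}\pi_i p_i$, but this does not materially shorten the proof.
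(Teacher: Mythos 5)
Your proof is correct and follows the same route as the paper: expand $\E[\pi(Y)]$ as a double sum, interchange summation, compute $\sum_{i=j}^\ell(j-i)$, and identify a third-moment combination. The only (cosmetic) difference is that you work with central moments $u=Y-\mu$ throughout, which makes the cancellations show up a step earlier than in the paper's version, which expands in $\E[Y^2]$, $\E[Y^3]$ before re-centering at the end.
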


\begin{remark}
 There are some similarities between our coefficients $\pi_i$ and the conditional probabilities $\beta_i$ appearing in \cite{holmes2004stein}; we have not been able to exploit this  connection in our applications at this stage. 

\end{remark}

\subsection{About the Stein  factors}
\label{sn:abstractwass}
We now return to the general setting to define and study the constants $C_q^0(w)$ and  $C_q^1(w)$  under the assumptions  stated in  Theorem \ref{theo main intro}.  

\begin{definition}\label{defn:2.10}
    For each $h \in \mathrm{Lip}(1)$ let $f_h$  be  given by \eqref{eq_coucou}. The Stein  factors  for $q$ with weight $w$ are
    \begin{equation}
        \label{eq:stima}
        C_q^0(w)=\frac{1}{2}\left(\sup_{h \in \mathrm{Lip}(1)} \| w'f_h'\|_\infty+\sup_{h \in \mathrm{Lip}(1)}\| (f_h'w)'\|_\infty\right),
    \end{equation}
    \begin{equation*}
         C^1_q(w)  = \sup_{h \in \mathrm{Lip}(1)} \frac{1}{6}\| f_h'\|_\infty \|w''\|_{\infty}
    \end{equation*}
    where $\| \cdot\|_\infty$ is the essential supremum over $(a, b)$. 
\end{definition}  
 Inspired by  \cite{dobler2015stein} we readily obtain  the following result.   
 \begin{lemma}
\label{prop borne}
 Let $F$ be the cumulative distribution function of $q$ and $\bar F $ the corresponding  survival function,  and set
\begin{align}
\label{eq Gamma12}
\Gamma_1(x)=\int_a^x  F
\quad\text{and}\quad \Gamma_2(x)=\int_x^b \bar{F} 
\end{align}
for all $x \in (a, b)$. 
Suppose that $w$ and $s$ satisfy the assumptions from Theorem \ref{theo main intro}. 
Then, for all $h\in Lip(1)$,
 \begin{align}
 |f'_h| & \leq  \left(\frac{1}{w}+\frac{s\bar{F}}{qw^2} \right)\Gamma_1
+\left(\frac{1}{w}- \frac{sF}{qw^2}\right)\Gamma_2  \label{eq:boundfp}\\ 
|(f_h'w)'| & \leq   \left|\frac{s}{w}+\frac{s^2\bar{F}}{qw^2}-\frac{s'\bar{F}}{w q}\right|\Gamma_1
+\left|\frac{s}{w}-\frac{s^2F}{qw^2}+\frac{s'F}{w q}\right|\Gamma_2+1 \label{eq:boundfwp}
 \end{align}
 \end{lemma}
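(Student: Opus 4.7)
The plan is to extract from the Stein equation an integral representation of $f_h'$ in which the contributions of $\Gamma_1$ and $\Gamma_2$ appear separately, and then apply $|h'|\le 1$. By Proposition \ref{prop:qsteinope}, $wf_h' + sf_h = h - \E[h(Z)]$, so $f_h' = (h-\E[h(Z)])/w - sf_h/w$; differentiating gives $(f_h'w)' = h' - s'f_h - sf_h'$. It therefore suffices to produce usable integral representations of $h - \E[h(Z)]$ and of $f_h$ themselves.

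I would apply integration by parts to $q(x)w(x)f_h(x) = \int_a^x (h(y) - \E[h(Z)])q(y)\, dy$ in two complementary ways. Using $q = F'$ with $F(a) = 0$ yields $qwf_h = F(x)(h(x) - \E[h(Z)]) - \int_a^x h'(y) F(y)\, dy$, while rewriting the integral as $-\int_x^b (h - \E[h(Z)])q\, dy$ (valid because $\int_a^b (h - \E[h(Z)])q = 0$) and using $q = -\bar{F}'$ with $\bar{F}(b) = 0$ yields $qwf_h = -\bar{F}(x)(h(x) - \E[h(Z)]) - \int_x^b h'(y) \bar{F}(y)\, dy$. Multiplying the first identity by $\bar{F}(x)$ and the second by $F(x)$ and summing eliminates the boundary term to give $qwf_h = -\bar{F}\int_a^x h'F - F\int_x^b h'\bar{F}$; subtracting in the opposite combination produces $h(x) - \E[h(Z)] = \int_a^x h'(y) F(y)\, dy - \int_x^b h'(y) \bar{F}(y)\, dy$. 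Substituting both expressions into $f_h' = (h - \E[h(Z)])/w - sf_h/w$ and collecting the coefficients of $\int_a^x h'F$ and $\int_x^b h'\bar{F}$ gives
\begin{equation*}
f_h'(x) = \Bigl(\tfrac{1}{w} + \tfrac{s\bar{F}}{qw^2}\Bigr)\int_a^x h'(y) F(y)\, dy + \Bigl(-\tfrac{1}{w} + \tfrac{sF}{qw^2}\Bigr)\int_x^b h'(y) \bar{F}(y)\, dy,
\end{equation*}
after which $|h'|\le 1$ together with the definitions of $\Gamma_1,\Gamma_2$ yield \eqref{eq:boundfp}. For \eqref{eq:boundfwp} I would substitute the same integral representations of $f_h$ and $f_h'$ into $(f_h'w)' = h' - s'f_h - sf_h'$, again collect the $\int_a^x h'F$ and $\int_x^b h'\bar{F}$ terms, and bound the residual $h'$ by $1$ to produce the constant $+1$ in the bound.

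The one genuine technicality is that the coefficients in \eqref{eq:boundfp} are written without absolute values, whereas the calculation above naturally produces them with bars. To drop the bars I would combine $(qw)' = sq$ with $\E[s(Z)] = 0$, which gives $q(x)w(x) = (qw)(a^+) + \int_a^x sq = (qw)(b^-) - \int_x^b sq$, and then use the strict monotonicity of $s$: on the regime $s > 0$ one has $\int_a^x sq \ge s(x)F(x)$, and on the regime $s < 0$ one has $-\int_x^b sq \ge -s(x)\bar{F}(x)$; the opposite regimes are trivial since $qw > 0$. Together these give $qw \ge sF$ and $qw \ge -s\bar{F}$ throughout $(a,b)$, which is exactly the non-negativity of the two bracketed coefficients. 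This sign check is the main subtlety of the argument; the rest is algebraic bookkeeping already sketched above.
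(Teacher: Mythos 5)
Your proof is correct and follows essentially the same route as the paper: both rest on the same two integral representations
\[
qwf_h=-\bar F\int_a^x h'F-F\int_x^b h'\bar F,
\qquad
f_h'=\Bigl(\tfrac1w+\tfrac{s\bar F}{qw^2}\Bigr)\int_a^x h'F
-\Bigl(\tfrac1w-\tfrac{sF}{qw^2}\Bigr)\int_x^b h'\bar F,
\]
and on computing $(f_h'w)'=h'-s'f_h-sf_h'$ and collecting coefficients of $\int_a^x h'F$ and $\int_x^b h'\bar F$. The difference is one of self-containment, not of method: the paper cites these representations (and the non-negativity of the two coefficients in \eqref{eq:boundfp}) from D\"obler's Proposition~2.14, Lemma~5.3(b) and equation~(47), whereas you rederive them by the two integrations by parts and supply a clean sign argument from $(qw)'=sq$, $\E[s(Z)]=0$ and the monotonicity of $s$ (using $\int_a^x sq\ge s(x)F(x)$ where $s>0$ and $-\int_x^b sq\ge -s(x)\bar F(x)$ where $s<0$, together with $(qw)(a^+),(qw)(b^-)\ge 0$, to get $qw\ge sF$ and $qw\ge -s\bar F$). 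Your version is therefore a complete stand-alone proof of the lemma, and the additional sign check closes a step the paper delegates to an external reference.
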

 
Recall the Stein kernel $\tau_q$ defined in \eqref{steinkk}. Among the many possible choices,  $w=\tau_q$ (and thus $s(x) = \E[Z]- x$) appears to be  most natural for many target distributions $q$. 
The Stein factors can be recomputed for this choice of weight, as follows. 

\begin{lemma}
\label{prop borne 2} Instate the notations of Lemma \ref{prop borne}, take $w=\tau_q$ and write $\overline{\mathrm{Id}}(x) = \E[Z]-x$. Then, 
\begin{align}
\label{eq Gamma12}
\Gamma_1=q\tau_q-\overline{\mathrm{Id}}F
\quad\text{and}\quad \Gamma_2 = q\tau_q+\overline{\mathrm{Id}}\bar{F}
\end{align}
and, for all $h\in\text{Lip}(1)$, 
\begin{align*}
|f_h'|& \leq 2 \frac{\Gamma_1 \Gamma_2}{q \tau_q^2}, \\
 |(f'_h\tau_q)'| & \leq  \left|\frac{\overline{\mathrm{Id}}}{\tau_q}+\frac{\overline{\mathrm{Id}}^2\bar{F}}{q\tau_q^2}+\frac{\bar{F}}{q \tau_q}\right|\Gamma_1
+\left|1-\left(\frac{\overline{\mathrm{Id}}}{\tau_q}+\frac{\overline{\mathrm{Id}}^2\bar{F}}{q\tau_q^2}+\frac{\bar{F}}{q\tau_q}\right)\Gamma_1\right|+1.
\end{align*}
Moreover, we have $|(f_h'\tau_q)'|\leq 2$ for all $h\in \mathrm{Lip}(1)$ if 
\begin{equation*}
\frac{\overline{\mathrm{Id}}q\tau_q}{\overline{\mathrm{Id}}^2+\tau_q}\leq F\leq \frac{\overline{\mathrm{Id}}q\tau_q}{\overline{\mathrm{Id}}^2+\tau_q}+1.
\end{equation*}
This last condition is satisfied e.g.\  when $-2\tau_q\leq \overline{\mathrm{Id}} \tau_q'$. 
\end{lemma}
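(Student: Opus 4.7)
The plan is to dispose of the four assertions in the order they appear, with the algebraic identity
\begin{equation*}
A\Gamma_1-B\Gamma_2=1
\end{equation*}
(where $A$ and $B$ denote the two unsigned bracketed expressions coming from \eqref{eq:boundfwp} after substituting $w=\tau_q$, $s=\overline{\mathrm{Id}}$, $s'=-1$) serving as the one non-routine input that powers the last two steps.

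\emph{Step 1 (explicit formulas for $\Gamma_1$ and $\Gamma_2$).} I would swap the order of integration in $\Gamma_1(x)=\int_a^xF(t)\,dt=\int_a^x(x-u)q(u)\,du$, rewrite this as $xF(x)-\int_a^xuq(u)\,du$, and then use $\int_a^b(u-\E[Z])q(u)\,du=0$ together with the definition of $\tau_q$ to recognise $\int_a^xuq(u)\,du=-q(x)\tau_q(x)+\E[Z]F(x)$. This produces $\Gamma_1=q\tau_q-\overline{\mathrm{Id}}F$, and a mirror-image computation on $(x,b)$ yields $\Gamma_2=q\tau_q+\overline{\mathrm{Id}}\bar F$.

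\emph{Step 2 (the bound on $f_h'$).} Plugging $w=\tau_q$, $s=\overline{\mathrm{Id}}$ into \eqref{eq:boundfp}, the two coefficients telescope via Step 1 since $1/\tau_q+\overline{\mathrm{Id}}\bar F/(q\tau_q^2)=\Gamma_2/(q\tau_q^2)$ and $1/\tau_q-\overline{\mathrm{Id}}F/(q\tau_q^2)=\Gamma_1/(q\tau_q^2)$. The two resulting contributions coincide and sum to the announced $2\Gamma_1\Gamma_2/(q\tau_q^2)$.

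\emph{Step 3 (the bound on $(f_h'\tau_q)'$ and the key identity).} The same substitution in \eqref{eq:boundfwp} gives $|(f_h'\tau_q)'|\le|A|\Gamma_1+|B|\Gamma_2+1$. The heart of the argument is to verify
\begin{equation*}
A\Gamma_1-B\Gamma_2=\frac{\overline{\mathrm{Id}}\,q\tau_q\,(\Gamma_1-\Gamma_2)+(\overline{\mathrm{Id}}^2+\tau_q)\,(\bar F\,\Gamma_1+F\,\Gamma_2)}{q\tau_q^2}=1,
\end{equation*}
which follows by combining Step 1 with the two consequences $\Gamma_1-\Gamma_2=-\overline{\mathrm{Id}}$ and $\bar F\,\Gamma_1+F\,\Gamma_2=q\tau_q$ (the latter a two-line expansion). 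Since $\Gamma_2\ge 0$, this reads $|B|\Gamma_2=|A\Gamma_1-1|=|1-A\Gamma_1|$, and plugging back gives the claimed inequality.

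\emph{Step 4 (uniform bound $\le 2$ and the stated sandwich).} Since $c+|1-c|+1=2$ for every $0\le c\le 1$, the Step 3 bound collapses to $|(f_h'\tau_q)'|\le 2$ as soon as $A\ge 0$ and $A\Gamma_1\le 1$. Using $\Gamma_1,\Gamma_2\ge 0$ and the identity of Step 3, $A\Gamma_1\le 1$ rewrites as $B\le 0$, which by direct inspection of $B$ amounts to $F\ge\overline{\mathrm{Id}}q\tau_q/(\overline{\mathrm{Id}}^2+\tau_q)$; similarly the sign analysis of $A$ turns $A\ge 0$ into $F\le\overline{\mathrm{Id}}q\tau_q/(\overline{\mathrm{Id}}^2+\tau_q)+1$.

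\emph{Step 5 (the sufficient differential condition).} Finally, to deduce the $F$-sandwich from $-2\tau_q\le\overline{\mathrm{Id}}\tau_q'$, I would introduce $G(x)=\overline{\mathrm{Id}}(x)q(x)\tau_q(x)/(\overline{\mathrm{Id}}(x)^2+\tau_q(x))$, check that $G$ vanishes at the endpoints of $(a,b)$ so that $F-G$ and $F-(G+1)$ have the correct boundary signs, and then track the sign of $(F-G)'$ using the Pearson-type identity $(\tau_q q)'=-\overline{\mathrm{Id}}q$ together with the hypothesis. This is the step I expect to be the genuine obstacle: extracting a two-sided global sandwich from a single one-sided differential inequality is delicate, and will likely require a careful case split around the zero $x=\E[Z]$ of $\overline{\mathrm{Id}}$.
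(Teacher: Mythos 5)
Your Steps 1--4 are correct and follow essentially the same path as the paper: Step 1 uses the same integration against $q$ that the paper carries out to identify $\Gamma_1=q\tau_q-\overline{\mathrm{Id}}F$ and $\Gamma_2=q\tau_q+\overline{\mathrm{Id}}\bar F$, Step 2 is the same telescoping, and Steps 3--4 hinge on the identity $A\Gamma_1-B\Gamma_2=1$, which is exactly what the paper proves (the paper does it by direct expansion of $B\Gamma_2$; your route via $\Gamma_1-\Gamma_2=-\overline{\mathrm{Id}}$ and $\bar F\Gamma_1+F\Gamma_2=q\tau_q$ is slightly cleaner but equivalent).

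The place where you stop is Step 5, and the anticipated obstacle there is actually not an obstacle. No case split around $x=\E[Z]$ is needed: both halves of the $F$-sandwich flow from a \emph{single} monotonicity statement. Writing $G=\overline{\mathrm{Id}}q\tau_q/(\overline{\mathrm{Id}}^2+\tau_q)$, the hypothesis $-2\tau_q\le\overline{\mathrm{Id}}\tau_q'$ is precisely equivalent (after computing $G'$ and simplifying over $(\overline{\mathrm{Id}}^2+\tau_q)^2$) to $G'\le q$, i.e.\ $(F-G)'\ge 0$ on all of $(a,b)$. Since $q\tau_q\to 0$ and $\overline{\mathrm{Id}}/(\overline{\mathrm{Id}}^2+\tau_q)$ stays bounded at both endpoints, $G\to 0$ at both $a$ and $b$, so $F-G\to 0$ at $a$ and $F-G\to 1$ at $b$. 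Monotonicity of $F-G$ then gives simultaneously $F-G\ge 0$ (from the left endpoint) and $F-G\le 1$ (from the right endpoint). That is the whole argument; there is no delicate two-sided extraction.

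One small correction: the Pearson-type identity you invoked has the wrong sign. From $\tau_q(x)q(x)=\int_x^b(u-\E[Z])q(u)\,du$ one gets $(\tau_q q)'=+\overline{\mathrm{Id}}\,q$, which is consistent with $s=(\tau_q q)'/q=\overline{\mathrm{Id}}$. Using $-\overline{\mathrm{Id}}q$ would break the computation of $G'$: with the correct sign, $(\overline{\mathrm{Id}}q\tau_q)'=-q\tau_q+\overline{\mathrm{Id}}^2 q$, and the inequality $q\ge G'$ collapses, after expanding $(\overline{\mathrm{Id}}^2+\tau_q)^2\ge\overline{\mathrm{Id}}^4-\tau_q^2+\overline{\mathrm{Id}}\tau_q(2\overline{\mathrm{Id}}-\tau_q')$, to $-2\tau_q\le\overline{\mathrm{Id}}\tau_q'$ as claimed.
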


The above partially simplifies when $Z$ belongs to the integrated Pearson family.
    
\begin{proposition}
\label{prop borne3}
Let $Z\sim \mathrm{IP}(m;\alpha, \beta, \gamma)$. If its support is a finite or a semi-finite interval, we have 
$||(f_h'\tau_q)'||_{\infty}\leq 2$ for all $h\in \mathrm{Lip}(1)$. 
If its support is $\R$, we have $||(f_h'\tau_q)'||_{\infty}\leq 2$ for all $h\in \mathrm{Lip}(1)$ if  $\beta=-2\E[Z]\alpha$ and $ \E[Z]\leq \sqrt{\gamma/\alpha}$. In particular, it is the case if $Z\sim q$ follows a Gaussian or a Student distribution. 
\end{proposition}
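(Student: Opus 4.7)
The plan is to verify the sufficient condition $-2\tau_q \le \overline{\mathrm{Id}}\,\tau_q'$ on $(a,b)$ that is furnished by the last part of Lemma \ref{prop borne 2}. For any integrated Pearson target we have $\tau_q(x)=\alpha x^2+\beta x+\gamma$, $\tau_q'(x)=2\alpha x+\beta$ and $\overline{\mathrm{Id}}(x)=m-x$ with $m=\E[Z]$. A direct expansion shows that the $x^2$ terms cancel, reducing the sufficient condition to the linear inequality
$$L(x) := (2\alpha m+\beta)x + \beta m + 2\gamma \ge 0, \quad x\in(a,b).$$
The rewriting $L(x) = 2\tau_q(x)+(m-x)\tau_q'(x)$ makes the analysis tractable: $L(m)=2\tau_q(m)\ge 0$ since $m\in(a,b)$, and $L(c)=(m-c)\tau_q'(c)$ whenever $\tau_q(c)=0$.

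For a finite support $[a,b]$ one has $\alpha<0$ with $a,b$ both roots of $\tau_q$; then $\tau_q'(a)>0$, $\tau_q'(b)<0$ and $a<m<b$ yield $L(a)>0$ and $L(b)>0$, so $L$ is nonnegative throughout $[a,b]$ by linearity. For a semi-infinite support such as $[a,+\infty)$, either $\alpha=0$ with $\beta>0$, in which case $L(x)=\beta(x+m-2a)\ge 0$ since $x,m\ge a$; or $\alpha>0$ with the second root $b$ of $\tau_q$ satisfying $b\le a$, in which case $\tau_q'(a)=\alpha(a-b)\ge 0$ gives $L(a)\ge 0$ and the slope of $L$ equals $\alpha(2m-a-b)>0$, making $L$ nondecreasing and nonnegative on $[a,+\infty)$. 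The case of support $(-\infty,b]$ is handled symmetrically.

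When the support is $\R$ the linear function $L$ must be nonnegative on all of $\R$, which forces its slope to vanish, i.e.\ $\beta=-2\alpha m$; then $L\equiv \beta m+2\gamma=2(\gamma-\alpha m^2)$ is nonnegative iff $\alpha m^2\le \gamma$, equivalently $m\le \sqrt{\gamma/\alpha}$ (noting that $\tau_q>0$ on $\R$ forces $\alpha\ge 0$, and when $\alpha=0$ the constraint is vacuous). The Gaussian has $\alpha=\beta=0$ and $\gamma=\sigma^2>0$, so both conditions are trivially satisfied; for the Student law, $\beta=0$ and $m=0$ by symmetry while $\alpha,\gamma>0$, so $\beta=-2\alpha m$ holds and $m^2=0\le \gamma/\alpha$. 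The only real bookkeeping challenge is tracking the sign of $\alpha$ imposed by the support type, but the two-point evaluation of $L$ at $m$ and at each endpoint makes each case essentially immediate.
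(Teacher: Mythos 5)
Your proof is correct and follows the same route as the paper's: both verify the sufficient condition $-2\tau_q \le \overline{\mathrm{Id}}\,\tau_q'$ from Lemma \ref{prop borne 2} by analysing the linear function that this inequality reduces to. The packaging differs in a useful way: by writing $L(x)=2\tau_q(x)+(m-x)\tau_q'(x)$ you get for free that $L(m)=2\tau_q(m)\ge 0$ and that $L(c)=(m-c)\tau_q'(c)$ at any root $c$ of $\tau_q$, which collapses each case to a sign check at one or two points and avoids the explicit factored form of $\tau_q$ that the paper works through. One concrete point in your favour: in the semi-finite case the paper's appendix proof asserts $\alpha>0$ outright, but $\alpha=0$ is possible (gamma and exponential, both used later in the paper, have $\tau_q(x)=x/\beta$); your proof treats $\alpha=0$ as a separate subcase via $L(x)=\beta(x+m-2a)$, closing a small gap left in the paper's argument. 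The $\R$ case and the Gaussian/Student specialisations match the paper's reasoning.
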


\begin{remark}
      Lemma \ref{prop borne 2} and Corollary \ref{prop borne3} give the universal bound $||(f_h'\tau_q)'||_{\infty}\leq 2$  for all $h\in \mathrm{Lip}(1)$ for many  cases. This result seems to be new.  We have  not been able to determine such a bound for $||f_h'\tau_q'||_{\infty}$,  though it is easy to bound this quantity on a case-by-case basis.  Moreover the literature also contains bounds on $||f_h'||_{\infty}$ for many interesting settings which we can use directly. 
\end{remark}

 \subsection{The bounds and their proofs} \label{sn:proof}
We begin by  proving Theorem \ref{theo main intro}.
\begin{proof}[Proof of Theorem \ref{theo main intro}]
We suppose that $I$ is finite; the case $I = \mathbb{N}$ follows along the exact same lines. 
For fixed $w$ satisfying our assumptions, recalling \eqref{eq:dDelta}, our purpose is to bound $\left| \E\left[w(X) \left(f_h'(X)-\Delta^{\pi} f_h(X)\right)\right]\right|$
uniformly in $h\in \mathrm{Lip}(1)$ with $f_h$ given by \eqref{eq_coucou}. Now fix $h\in \mathrm{Lip}(1)$.  Writing  $f$ instead of $f_h$ we have 
\begin{align*}
\left| \E\left[w(X) \left(f'(X)-\Delta^{\pi} f(X)\right)\right]\right|
& \leq  \sum_{i=0}^\ell \left| \pi_i \left(f'(x_i)-\Delta_+ f(x_i)\right)w(x_i)p_i \right| \\
& +\sum_{i=0}^\ell \left| (1-\pi_i)\left(f'(x_i)-\Delta_{-}f(x_i)\right) w(x_i)p_i\right| \\
& =: L_1+L_2.
\end{align*}
We only prove the bound on $L_1$, since the bound on $L_2$ follows by the exact same computation.  First, 
\begin{align*}
L_1& =\sum_{i=0}^\ell \left|\frac{\pi_i}{\mesh_{i}^+} \int_{x_i}^{x_{i+1}}\left(f'(x_i)-f'(y)\right)dy\  w(x_i)p_i\right|\\
&=\sum_{i=0}^\ell \left|\frac{\pi_i}{\mesh_{i}^+} \int_{x_i}^{x_{i+1}}\left[f'(x_i)w(x_i)-f'(y)w(y)+(w(y)-w(x_i))f'(y)\right]dy\,  p_i\right|\\
&=\sum_{i=0}^\ell\left| \frac{\pi_i}{\mesh_{i}^+} \int_{x_i}^{x_{i+1}}\int_{x_i}^y\left[-(f'(t)w(t))'+w'(t)f'(y)\right]dt\, dy\,  p_i\right|\\
&=\sum_{i=0}^\ell\left| \frac{\pi_i}{\mesh_{i}^+} \int_{x_i}^{x_{i+1}}\int_{x_i}^y\left[-(f'(t)w(t))'+w'(y)f'(y)+(w'(t)-w'(y))f'(y)\right]dt\,dy\, p_i\right|\end{align*}
Uniformly bounding the innermost expression  we get  the bound 
\begin{align*}
L_1
&\leq \sum_{i=0}^\ell \frac{|\pi_i|}{\mesh_{i}^+} \int_{x_i}^{x_{i+1}}\int_{x_i}^y\left[||(f'w)'||_{\infty}+||w'f'||_{\infty}+||w''||_{\infty}||f'||_{\infty}(y-t)\right]dt\,dy\, p_i\\
&=\sum_{i=0}^\ell |\pi_i|\left[\frac{\mesh_{i}^+}{2} \left(||(f'w)'||_{\infty}+||w'f'||_{\infty}\right)+\frac{(\mesh_{i}^+)^2}{6}||w''||_{\infty}||f'||_{\infty}\right]  p_i.
\end{align*}
This gives our result. \end{proof}

Obviously we made several editorial choices during the above proof, and many variations are possible depending on the specific context at hand. For example whenever $C_q^1(w) \neq 0$ 
it may be handier to use the following result which can be obtained through only minor modifications of the above proof. 

\begin{theorem}\label{theo2dform}
    Under the same assumptions it holds that 
\begin{equation*}
    W_1(X,Z)\leq C_q(w)\mathbb{E}\left[|\pi(X)| \mesh^+(X) + |1-\pi(X)| \mesh^-(X) \right]
\end{equation*}
where $$C_q(w)=1/2\left(\sup_{h \in \mathrm{Lip}(1)} \| f_h'\|_\infty \| w'\|_\infty+\sup_{h \in \mathrm{Lip}(1)}\| (f_h'w)'\|_\infty\right).$$ 
\end{theorem}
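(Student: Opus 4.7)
The plan is to replay the proof of Theorem \ref{theo main intro} but stop the Taylor-like expansion one step earlier, thereby trading the $\|w''\|_\infty$-weighted quadratic mesh term for a $\|w'\|_\infty \|f'\|_\infty$-weighted linear one. I would fix $h\in \mathrm{Lip}(1)$, set $f = f_h$, and split $\mathbb{E}[w(X)(f'(X) - \Delta^\pi f(X))]$ into its forward and backward contributions $L_1$ and $L_2$ exactly as before.

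For $L_1$ I would retain the intermediate identity
\[
L_1 = \sum_{i=0}^\ell \left|\frac{\pi_i}{\mesh_i^+} \int_{x_i}^{x_{i+1}}\bigl[(f'(x_i)w(x_i) - f'(y)w(y)) + (w(y)-w(x_i))f'(y)\bigr]\,dy\right| p_i
\]
already derived in the proof of Theorem \ref{theo main intro}, but not perform the extra inner integration that generates the $\|w''\|_\infty$ contribution. Instead, I would estimate the two inner pieces at first order via the fundamental theorem of calculus,
\[
|f'(x_i)w(x_i) - f'(y)w(y)| \le \|(f'w)'\|_\infty\,(y - x_i), \quad |(w(y)-w(x_i))f'(y)| \le \|w'\|_\infty\, \|f'\|_\infty\,(y-x_i),
\]
and evaluate $\int_{x_i}^{x_{i+1}}(y-x_i)\,dy = (\mesh_i^+)^2/2$ to obtain
\[
L_1 \le \tfrac{1}{2}\bigl(\|(f'w)'\|_\infty + \|w'\|_\infty \|f'\|_\infty\bigr) \sum_{i=0}^\ell |\pi_i|\,\mesh_i^+\, p_i.
\]
The same argument on $[x_{i-1}, x_i]$ in place of $[x_i, x_{i+1}]$ yields the analogous bound on $L_2$, with $1-\pi_i$ and $\mesh_i^-$ replacing $\pi_i$ and $\mesh_i^+$. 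Adding the two estimates and taking the supremum over $h \in \mathrm{Lip}(1)$ collapses the $h$-dependent quantities into the single constant $C_q(w)$, which is the claim.

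There is no serious obstacle, since all of the structural work — the existence of $\pi$ solving \eqref{eq_deltas}, the Stein operator comparison, and the shape of the integral representation of $L_1$ — has already been carried out upstream. The only delicate point I anticipate is bookkeeping: one has to check that $\|w'\|_\infty$ factors out of the supremum over $h$, so that $\sup_h \|w'\|_\infty \|f_h'\|_\infty = \|w'\|_\infty \sup_h \|f_h'\|_\infty$ recombines with $\sup_h \|(f_h' w)'\|_\infty$ exactly as in the definition of $C_q(w)$. This is also where the trade-off with Theorem \ref{theo main intro} becomes visible: by truncating one order earlier, the present bound avoids the $\|w''\|_\infty$ factor entirely at the price of a slightly larger linear-in-mesh constant, which is the natural option when $w$ has a large or inconvenient second derivative.
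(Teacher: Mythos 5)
Your proposal is correct and coincides with what the paper clearly intends when it says Theorem~\ref{theo2dform} follows ``through only minor modifications'' of the proof of Theorem~\ref{theo main intro}: you retain the same decomposition of $\E[w(X)(f_h'(X)-\Delta^\pi f_h(X))]$ into $L_1+L_2$, preserve the same algebraic split of the integrand into $(f'(x_i)w(x_i)-f'(y)w(y))+(w(y)-w(x_i))f'(y)$, and simply stop the Taylor-type expansion one order earlier, bounding each piece by the fundamental theorem of calculus to produce the factor $\tfrac{1}{2}(\|(f_h'w)'\|_\infty+\|w'\|_\infty\|f_h'\|_\infty)\,\mesh_i^+$ instead of continuing to peel off a $\|w''\|_\infty$ term. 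The only point worth noting, and you flag it yourself, is that because $w'$ and $f_h'$ are now evaluated at different points ($t$ and $y$ after the inner integration), you must use the product of sup-norms $\|w'\|_\infty\|f_h'\|_\infty$ rather than $\|w'f_h'\|_\infty$ as in $C_q^0(w)$ — and this is indeed how $C_q(w)$ is defined in the statement, so the bookkeeping is consistent.
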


We will use this result in Section \ref{sn:beta}, in the context of beta approximation. 

\begin{remark}
Of course it may be the case that $\|w''\|_\infty = \infty$. 
If $w$ is a polynomial of order $k$, we propose to  use the following generalization of the previous argument
\begin{align*}
W_1(X,Z)\leq &C_q^0(w)  \, \mathbb{E}\left[|\pi(X)| \mesh^+(X) + |1-\pi(X)| \mesh^-(X) \right]\\
&+\sum_{j=2}^{\ell}\frac{1}{(j+1)!}\sup_{h \in \mathrm{Lip}(1)} \| w^{(j)}f_h'\|_\infty  \, \mathbb{E}\left[|\pi(X)| (\mesh^+(X))^{j} + |1-\pi(X)| (\mesh^-(X))^j \right].
\end{align*}
We have not studied any example where this came in useful, hence do not pursue this further here.  
\end{remark}

It may occur that uniformly bounding on $|(f_h'w)'|$, $|f_h'w'|$, $|f_h'|$ and $|w''|$ as done  above  does not yield interesting bounds. The following variation of the previous result may then be used. 

\begin{theorem}
    \label{rem:notroutine} 
 Still under the same    assumptions it holds that 
\begin{align*}
W_1(X,Z)\leq &  \sum_{i=0}^\ell  \left(C_q^{0,i+1}(w)|\pi_i|  \mesh_{i}^+ + C_q^{0,i}(w)|1-\pi_i|  \mesh_{i}^- \right) p_i\\
&+\sum_{i=0}^\ell  \left(C^{1,i+1}_q(w)|\pi_i|  (\mesh_{i}^+)^2+ C^{1,i}_q(w)|1-\pi_i|  (\mesh_{i}^-)^2\right) p_i
\end{align*}
where $C_q^{0,i}(w)=1/2\sup_{h \in \mathrm{Lip}(1)} \sup_{[x_{i-1},x_{i}]}| (f_h'w)'| +1/2\sup_{h \in \mathrm{Lip}(1)}\sup_{[x_{i-1},x_{i}]} | w'f_h'| $ and $C_q^{1,i}(w)=1/6\sup_{h \in \mathrm{Lip}(1)} \sup_{[x_{i-1},x_{i}]}| f_h'|\sup_{[x_{i-1},x_{i}]}| w''|$ for $i\in I\setminus\left\{0\right\}$ and $C_q^{0,0}(w)=0=C_q^{1,0}(w)$ and $C_q^{0,\ell+1}(w)=0=C_q^{1,\ell+1}(w)$ if $\ell< \infty$. 
     
\end{theorem}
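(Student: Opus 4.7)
The plan is to revisit the proof of Theorem \ref{theo main intro} and localize every supremum bound to the subinterval on which the integrand actually lives. Nothing in the earlier argument required a global essential supremum; the uniform bounds on $|(f_h'w)'|$, $|w'f_h'|$, $|f_h'|$ and $|w''|$ were applied to integrands supported on $[x_i,x_{i+1}]$ (for the forward piece $L_1$) and on $[x_{i-1},x_i]$ (for the backward piece $L_2$). Replacing those global suprema by the corresponding local suprema is what produces the $i$-indexed constants $C_q^{0,i}(w)$ and $C_q^{1,i}(w)$, and costs us nothing.

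First I would fix $h\in\mathrm{Lip}(1)$ and write $f=f_h$ as in the proof of Theorem \ref{theo main intro}, obtaining the decomposition $|\E[w(X)(f'(X)-\Delta^\pi f(X))]|\leq L_1+L_2$ with exactly the same $L_1,L_2$. Then I would carry out, for each $i$ separately, the same two integrations by parts as before, arriving at the identity
\begin{align*}
L_1 &= \sum_{i=0}^{\ell}\left|\frac{\pi_i}{\mesh_i^+}\int_{x_i}^{x_{i+1}}\!\!\int_{x_i}^{y}\!\!\left[-(f'w)'(t)+w'(y)f'(y)+(w'(t)-w'(y))f'(y)\right]dt\,dy\,p_i\right|,
\end{align*}
and an analogous identity for $L_2$ with integration over $[x_{i-1},x_i]$. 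At this stage, the only estimates used are: $|(f'w)'(t)|$, $|w'(y)f'(y)|$, and $|w''|\,|f'|\,|y-t|$ where $t,y\in[x_i,x_{i+1}]$. So I would replace each of these by the supremum over the interval $[x_i,x_{i+1}]$ only; this produces
\begin{align*}
L_1\leq \sum_{i=0}^{\ell}|\pi_i|\!\left[\tfrac{\mesh_i^+}{2}\bigl(\!\sup_{[x_i,x_{i+1}]}|(f'w)'|+\sup_{[x_i,x_{i+1}]}|w'f'|\bigr)+\tfrac{(\mesh_i^+)^2}{6}\sup_{[x_i,x_{i+1}]}|w''|\sup_{[x_i,x_{i+1}]}|f'|\right] p_i,
\end{align*}
and after taking the supremum over $h\in\mathrm{Lip}(1)$ the bracket is bounded above by $C_q^{0,i+1}(w)\mesh_i^+ + C_q^{1,i+1}(w)(\mesh_i^+)^2$, because the interval $[x_i,x_{i+1}]$ equals $[x_{(i+1)-1},x_{i+1}]$, which is precisely the range appearing in the definition of the local Stein factors with upper index $i+1$. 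The same computation for $L_2$ on $[x_{i-1},x_i]$ yields the $C_q^{0,i}(w)$ and $C_q^{1,i}(w)$ constants.

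Finally I would address the boundary by noting that $(\Delta^-f)_0=0$ and, if $\ell<\infty$, $(\Delta^+f)_\ell=0$, so the $i=0$ term in $L_2$ and the $i=\ell$ term in $L_1$ both vanish; this is consistent with (and justifies) the convention $C_q^{0,0}(w)=C_q^{1,0}(w)=0$, together with $C_q^{0,\ell+1}(w)=C_q^{1,\ell+1}(w)=0$ when $\ell<\infty$. Summing $L_1$ and $L_2$ and using the representation \eqref{eqwa} of $W_1$ via $\mathrm{Lip}(1)$ test functions yields the claimed bound. I do not foresee any serious obstacle: the argument is a faithful rerun of the proof of Theorem \ref{theo main intro} with each $\|\cdot\|_\infty$ replaced by a local supremum, and the only bookkeeping point is the index shift that turns forward-difference intervals into the index-$(i+1)$ constants and backward-difference intervals into the index-$i$ constants.
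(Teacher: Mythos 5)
Your proposal is correct and is exactly the argument the paper has in mind: Theorem~\ref{rem:notroutine} is stated in the text as one of the ``variations \dots obtained through only minor modifications of the above proof,'' and your rerun of the proof of Theorem~\ref{theo main intro} with each global $\|\cdot\|_\infty$ replaced by the supremum over the relevant subinterval, together with the index shift $[x_i,x_{i+1}]\leftrightarrow C_q^{\cdot,i+1}(w)$ and the boundary conventions coming from $(\Delta^-f)_0=0$ and $(\Delta^+f)_\ell=0$, is precisely what is needed. The only point worth keeping in mind (and which you implicitly use) is that after localizing one still interchanges $\sup_{h}$ with the sum over $i$, which is legitimate since each summand is nonnegative.
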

 We will use this result in Section \ref{sn:queue}, in the context of convergence to equilibrium of classical queueing models.

\subsection{More about the weights}\label{sn:more}
In the sequel, we will use the notation $\delta_i=\delta_i^-=x_i-x_{i-1}$. 
Whenever it is not possible to obtain explicit expressions for the weights $\pi_i$,   the following results may be useful. 
\begin{proposition}
\label{prop borne sup}
Assume that $I$ is finite (i.e.\ $\ell < \infty$).
To ensure that the coefficients $(\pi_i)_{i\in I}$ defined in \eqref{eq_ai3} satisfy $\pi_i\leq 1$ for all $i\in I$, it is sufficient that one of the following conditions holds.
\begin{enumerate}
\item There exists $i_1\in \left\{0,\ldots,\ell-1\right\}$ such that $p_{i}w_{i}- \delta_{i+1}\sum_{j=0}^is_jp_j$ is negative for $i=0,\ldots,i_1$ and positive for $i=i_1+1,\ldots,\ell-1$.
\item $w_0\leq \delta_1 s_0$, $w_{\ell-1}p_{\ell-1}\leq -\delta_{\ell}s_{\ell}p_{\ell}$ and there exists $i_2\in\left\{0,\ldots,\ell-1\right\}$ such that $p_{i}w_{i}/\delta_{i+1}-p_{i-1} w_{i-1}/\delta_{i}- s_{i}p_{i}$ is negative for $i=1,\ldots,i_2$ and positive for $i=i_2+1,\ldots,\ell-1$.
\item $w_0\leq \delta_1 s_0$, $w_{\ell-1}p_{\ell-1}\geq -\delta_{\ell}s_{\ell}p_{\ell}$ and there exists $i_2,i_3\in\left\{0,\ldots,\ell-1\right\}$ such that $i_2\leq i_3$, $p_{i}w_{i}/\delta_{i+1}-p_{i-1} w_{i-1}/\delta_{i}- s_{i}p_{i}$ is negative for $i=1,\ldots,i_2$ and $i=i_3+1,\ldots,\ell-1$ and positive for $i=i_2+1,\ldots,i_3$.
\end{enumerate}
\end{proposition}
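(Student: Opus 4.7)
The plan is to reformulate $\pi_i\le 1$ as the non-positivity of a sequence, identify its telescoping increment, and then let each of the three conditions control the sign of this increment (or of a rescaled version of it). Concretely, set $A_i = p_i w_i \pi_i = \sum_{j=0}^i((x_j-x_i)s_j+w_j)p_j$ and $B_i = A_i - p_i w_i$, so that $\pi_i\le 1$ is equivalent to $B_i\le 0$. A direct computation of $A_{i+1}-A_i$ (pulling the $x_{i+1}$ term outside the sum) yields the telescoping identity
\begin{equation*}
B_{i+1}-B_i = D_i := p_i w_i - \delta_{i+1}\sum_{j=0}^{i}s_j p_j, \qquad i\in\{0,\ldots,\ell-1\}.
\end{equation*}
Since $\pi_0=1$ we have $B_0=0$, and using the standardisation \eqref{condition1} (which gives $A_\ell=0$, as noted right after Theorem~\ref{theo main intro}) we have $B_\ell = -p_\ell w_\ell\le 0$. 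So the whole argument reduces to showing $B_i\le 0$ for all intermediate $i$, given partial information on the sign pattern of $D_i$.

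For condition (1), the hypothesis states that $D_i$ itself is negative on $\{0,\ldots,i_1\}$ and positive on $\{i_1+1,\ldots,\ell-1\}$. Then $B$ is decreasing on $\{0,\ldots,i_1+1\}$ (so stays $\le B_0=0$) and increasing on $\{i_1+1,\ldots,\ell\}$ (so stays $\le B_\ell\le 0$). Combining both intervals gives $B_i\le 0$ throughout.

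For condition (2), I would reparametrise by introducing $C_i = D_i/\delta_{i+1}$. A straightforward computation shows $C_i-C_{i-1} = p_i w_i/\delta_{i+1} - p_{i-1}w_{i-1}/\delta_i - s_i p_i$, which is exactly the quantity controlled in hypothesis (2). The assumed sign pattern of $C_i-C_{i-1}$ makes $i\mapsto C_i$ V-shaped on $\{0,\ldots,\ell-1\}$, so its maximum is attained at one of the two endpoints. The endpoint values are $C_0 = p_0(w_0-\delta_1 s_0)/\delta_1 \le 0$ (first inequality of (2)) and $C_{\ell-1} = (p_{\ell-1}w_{\ell-1}+\delta_\ell s_\ell p_\ell)/\delta_\ell\le 0$ (second inequality of (2), combined with $\E[s(X)]=0$ which gives $\sum_{j=0}^{\ell-1}s_j p_j = -s_\ell p_\ell$). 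Hence $C_i\le 0$ for all $i$, so $D_i\le 0$, so $B$ is non-increasing, and therefore $B_i\le B_0=0$.

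For condition (3), the same $C_i$ is now V-then-$\Lambda$-shaped, with $C_0\le 0$ and (by the same $\sum_j s_j p_j = 0$ identity) $C_{\ell-1}\ge 0$. The plan is to show this forces a single sign change of $C_i$, reducing to condition (1). On $\{0,\ldots,i_2\}$, $C$ is decreasing so $C_i\le C_0\le 0$. On $\{i_3,\ldots,\ell-1\}$, $C$ is decreasing with $C_{\ell-1}\ge 0$, so $C_i\ge C_{\ell-1}\ge 0$. On the intermediate range $\{i_2,\ldots,i_3\}$, $C$ is monotonically increasing from a non-positive value to a non-negative value, so there is a unique index $k$ at which it crosses zero. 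This produces exactly the neg-then-pos sign pattern of condition (1) with $i_1=k$, and the argument of that case closes the proof. The main book-keeping obstacle is this last reduction, but the monotonicity on each of the three sub-ranges makes it automatic once the endpoint signs are identified.
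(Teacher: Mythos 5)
Your proposal is correct and follows essentially the same route as the paper: your $B_i$ is the paper's $b_i$, your increment $D_i$ is the paper's $c_i$, your rescaled quantity $C_i=D_i/\delta_{i+1}$ has increment equal to the paper's $d_i$, and the reduction of conditions (2) and (3) to the sign-pattern of condition (1) via the endpoint evaluations $C_0$ and $C_{\ell-1}$ (using $\E[s(X)]=0$ for the latter) is exactly the paper's argument. The only cosmetic difference is nomenclature.
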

We have similar conditions for the lower bound.
\begin{proposition}
\label{prop borne inf}
Assume that $I$ is finite (i.e.\ $\ell < \infty$).
To ensure that the coefficients $(\pi_i)_{i\in I}$ defined in \eqref{eq_ai3} satisfy $\pi_i\geq 0$ for all $i\in I$, it is sufficient that one of the following conditions holds.
\begin{enumerate}
\item There exists $i_1\in \left\{0,\ldots,\ell-1\right\}$ such that $p_{i}w_{i}- \delta_{i}\sum_{j=0}^{i-1}s_jp_j$ is positive for $i=1,\ldots,i_1$ and negative for $i=i_1+1,\ldots,\ell$. 
\item $w_{\ell}\leq -\delta_{\ell} s_{\ell}$, $w_{1}p_{1}\leq \delta_{1}s_{0}p_{0}$ and there exists $i_2\in\left\{0,\ldots,\ell-1\right\}$ such that \linebreak$p_{i+1}w_{i+1}/\delta_{i+1}-p_{i} w_{i}/\delta_{i}- s_{i}p_{i}$ is negative for for $i=1,\ldots,i_2$ and positive for $i=i_2+1,\ldots,\ell-1$. 
\item $w_{\ell}\leq -\delta_{\ell} s_{\ell}$, $w_{1}p_{1}\geq \delta_{1}s_{0}p_{0}$ and there exists $i_2,i_3\in\left\{0,\ldots,\ell-1\right\}$ such that $i_2\leq i_3$ and $p_{i+1}w_{i+1}/\delta_{i+1}-p_{i} w_{i}/\delta_{i}- s_{i}p_{i}$ is positive for $i=1,\ldots,i_2$ and $i=i_3+1,\ldots,\ell-1$ and negative for $i=i_2+1,\ldots,i_3$. 
\end{enumerate}
\end{proposition}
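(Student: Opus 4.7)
The plan is to reformulate $\pi_i\geq 0$ as nonnegativity of the auxiliary sequence
\[
A_i := \pi_i p_i w_i = \sum_{j=0}^i \left( (x_j - x_i) s_j + w_j \right) p_j,
\]
which is legitimate because $p_i w_i > 0$ on the support. Two boundary values are crucial: $A_0 = p_0 w_0 > 0$, and $A_\ell = 0$, the latter following (as already noted after Theorem \ref{theo main intro}) from the moment conditions \eqref{condition1-intro}. I would then compute the two relevant discrete derivatives, namely the first difference $D_i := A_i - A_{i-1} = p_i w_i - \delta_i \sum_{j=0}^{i-1} s_j p_j$ (which is exactly the quantity in condition (1)), and the ``normalized second difference''
\[
E_i := \frac{D_{i+1}}{\delta_{i+1}} - \frac{D_i}{\delta_i} = \frac{p_{i+1} w_{i+1}}{\delta_{i+1}} - \frac{p_i w_i}{\delta_i} - s_i p_i,
\]
which is exactly the quantity in conditions (2) and (3).

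Under condition (1), the prescribed sign pattern of $D_i$ (positive then negative) says that $A_i$ first increases then decreases, so $A$ is unimodal; since $A_0 > 0$ and $A_\ell = 0$, unimodality immediately forces $A_i \geq \min\{A_0, A_\ell\} = 0$ for every $i$. For conditions (2) and (3), I would first convert the boundary hypotheses into sign information on $D$ at its endpoints. Using $\E[s(X)] = 0$, we have $\sum_{j=0}^{\ell-1} s_j p_j = -s_\ell p_\ell$, so $D_\ell = p_\ell (w_\ell + \delta_\ell s_\ell)$; hence $w_\ell \leq -\delta_\ell s_\ell$ yields $D_\ell \leq 0$, while the stated inequality between $w_1 p_1$ and $\delta_1 s_0 p_0$ controls the sign of $D_1$ directly.

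Condition (2) then unfolds as follows: the sign pattern of $E_i$ (negative then positive) tells us the sequence $i \mapsto D_i / \delta_i$ is unimodal (decreasing then increasing), and since both its boundary values $D_1/\delta_1$ and $D_\ell/\delta_\ell$ are $\leq 0$ by the previous step, the whole sequence $D_i$ is $\leq 0$. Thus $A_i$ is weakly decreasing, and together with $A_\ell = 0$ this yields $A_i \geq 0$. Condition (3) is the subtlest: the sign pattern $(+,-,+)$ of $E_i$ makes $D/\delta$ go up, then down, then up, with $D_1/\delta_1 \geq 0$ and $D_\ell/\delta_\ell \leq 0$. Because the final increasing block lands at a nonpositive value, the whole final block is $\leq 0$; and because the middle decreasing block starts at a value at least $D_1/\delta_1 \geq 0$ and ends at a value no larger than $D_\ell/\delta_\ell \leq 0$, it crosses zero exactly once. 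Consequently $D_i \geq 0$ for $i$ below some threshold and $D_i \leq 0$ after, so $A$ is again unimodal and the same endpoint argument gives $A_i \geq 0$.

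The main obstacle I expect is the bookkeeping in case (3): the argument relies on the observation that the terminal (increasing) block of $D/\delta$ cannot cross zero, which is in turn dictated by $D_\ell/\delta_\ell \leq 0$, and on the careful placement of the single sign change of $D$ inside the middle block. Once one has reduced each case to showing that $A$ is either weakly monotone or unimodal, the conclusion follows uniformly from the two endpoint values $A_0 > 0$ and $A_\ell = 0$.
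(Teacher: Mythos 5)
Your proposal is correct and follows essentially the same route as the paper: your $A_i$ is the paper's $b_i$, your $D_i$ is the paper's $c_i$, and your $E_i$ is the paper's $d_i$, and the endpoint/unimodality logic is identical. The only (minor) refinement is that you consistently track the normalized quantity $D_i/\delta_i$ in cases (2) and (3), whereas the paper's write-up occasionally blurs $c$ and $c/\delta$ when speaking of monotonicity; your version is slightly more careful on that point, but the underlying argument is the same.
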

It is easy to show that $w_0\leq \delta_1 s_0$ and $w_{\ell}\leq -\delta_{\ell} s_{\ell}$ are necessary conditions to have $0\leq\pi_i\leq 1$ for all $i=0,\ldots,\ell$. If $Z$ is standard Gaussian and $X = (Y-\E[Y])/\sqrt{\Var[Y]}$ with $Y$ integer valued (as in Section \ref{ex:xomepisnorm}) with support $\{0, \ldots, \ell\}$, these last two inequalities can be rewritten as 
\begin{equation}
\label{necessary gaus}
\Var[Y]\leq \min\left\{\E[Y],\ell-\E[Y]\right\}.
\end{equation}



\subsection{Building converging sequences}\label{sn:buildconvsequ}
In this final section we consider the following variation on the question which animated us thus far: given a continuous variable $Z$ and an ordered set of points $\left\{x_i, i\in I\right\}$, build a family of discrete variables $X_{\mesh}$ such that $W_1(X_{\mesh},Z)\leq C \mesh$
for some constant $C>0$ where $\mesh=\sup_{i \in I\setminus\left\{0\right\}}(x_{i}-x_{i-1})$. An answer is provided in our next result. 
\begin{proposition}
\label{prop panjer}
Let $Z\sim q$ and   $w : (a, b) \to \mathbb R^+_0$ be two times differentiable. 
Let  $\tau_q$ be the Stein kernel of $q$ and suppose that  $\tau_q/w$ is bounded on $(a, b)$,     $s\in L^1(q)$, $s$ is strictly decreasing and continuous, and  $\E[s(Z)]=0$.
Let $\left\{x_i, i\in I\right\}\subset \overline{(a,b)}$ be a set of ordered points, with $I=\left\{0,\ldots,\ell\right\}$ or $I=\N$, such that 
$\delta_1 s_0\geq w_0$, $w_i+\delta_i s_i\geq 0$ for all $i\in I\setminus \left\{0\right\}$ and $w_{\ell}=-\delta_{\ell}s_{\ell}$ if $I$ is finite. 
Define $X$ as a discrete variable on $\left\{x_i, i\in I\right\}$ with density $p=(p_i)_{i\in I}$ given by $p_1 w_1=\left(\delta_1 s_0-w_0\right)p_0$, $p_2w_2\delta_1/\delta_2=\left(w_1+\delta_1s_1\right)p_1+w_0p_0$ and
\begin{equation*}
p_{i}= \frac{\delta_{i}}{w_{i}}\left(\frac{w_{i-1}}{\delta_{i-1}}+s_{i-1}\right)p_{i-1},\quad i\in I\setminus \left\{0,1,2\right\}
\end{equation*}
Then, if $w\in L^1(p)$, we have
\begin{equation*}
W_1(X,Z)\leq C_q^0(w)\left(\delta_1p_0+\sum_{i=1}^{\ell}\delta_i p_i\right)+C_q^1(w)\left((\delta_1)^2p_0+\sum_{i=1}^{\ell}(\delta_i)^2p_i\right)
\end{equation*}
where $\ell=\infty$ if $I=\N$.
\end{proposition}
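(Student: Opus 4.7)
The strategy is to apply Theorem~\ref{theo main intro} to the pair $(p,q)$, exploiting the fact that the three-piece definition of $p$ in the statement is tailored so that the bespoke weights from \eqref{eq_ai3-intro} collapse to the extremal values $\pi_0 = 1$ and $\pi_i = 0$ for all $i \ge 1$. Granting this, for each $i \ge 1$ only the backward term $|1-\pi_i|\delta^-(x_i) = \delta_i$ survives in the right-hand side of \eqref{elegantform-intro}, while for $i = 0$ only the forward term $|\pi_0|\delta^+(x_0) = \delta_1$ survives, so the expectations reduce exactly to $\delta_1 p_0 + \sum_{i=1}^\ell \delta_i p_i$ and its squared-mesh analogue, yielding the announced bound. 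Before that, checking that $p$ is a bona fide probability distribution is straightforward: non-negativity of the $p_i$ follows termwise from the hypotheses $\delta_1 s_0 \ge w_0$ and $w_i + \delta_i s_i \ge 0$, and $p_0$ is fixed by normalisation (summability in the case $I = \N$ being secured by $w \in L^1(p)$ together with the positivity of $w$).

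The crux of the proof is showing $\pi_i = 0$ for $i \ge 1$, which I would do inductively on $i$. Using the recurrence form of \eqref{eq_ai3} recalled in the Remark after Proposition~\ref{prop a}, vanishing of $\pi_i$ given the value of $\pi_{i-1}$ is equivalent to
\[
w_i p_i \;=\; \delta_i \sum_{j=0}^{i-1} s_j p_j \;-\; \pi_{i-1} w_{i-1} p_{i-1}.
\]
At $i = 1$ with $\pi_0 = 1$ this is exactly the statement's formula $w_1 p_1 = (\delta_1 s_0 - w_0) p_0$. At $i = 2$, assuming $\pi_1 = 0$, it reads $w_2 p_2 = \delta_2(s_0 p_0 + s_1 p_1)$, and substituting the $i=1$ identity to eliminate $\delta_1 s_0 p_0$ rewrites it as the prescribed $p_2 w_2 \delta_1/\delta_2 = w_0 p_0 + (w_1 + \delta_1 s_1) p_1$. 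For $i \ge 3$, the inductive hypothesis $w_{i-1} p_{i-1} = \delta_{i-1}\sum_{j=0}^{i-2} s_j p_j$ rearranges into $\sum_{j=0}^{i-1} s_j p_j = (w_{i-1}/\delta_{i-1} + s_{i-1}) p_{i-1}$, which combined with $w_i p_i = \delta_i \sum_{j=0}^{i-1} s_j p_j$ recovers the general recurrence from the statement.

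It remains to verify the standardisation conditions \eqref{condition1-intro} needed to invoke Theorem~\ref{theo main intro}. In the finite case, the boundary hypothesis $w_\ell = -\delta_\ell s_\ell$ combined with the identity $w_\ell p_\ell = \delta_\ell \sum_{j=0}^{\ell-1} s_j p_j$ from the previous step gives $\E[s(X)] = \sum_{j=0}^\ell s_j p_j = p_\ell(w_\ell/\delta_\ell + s_\ell) = 0$, and then reading $\pi_\ell = 0$ directly off \eqref{eq_ai3-intro} yields $\E[X s(X) + w(X)] = x_\ell\, \E[s(X)] = 0$; in the infinite case the same two identities follow by a short limiting argument based on $w \in L^1(p)$. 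The main obstacle is really the algebraic bookkeeping of the middle paragraph: the slightly opaque three-piece definition of $p$ exists precisely to accommodate the jump from $\pi_0 = 1$ to $\pi_1 = 0$ (the origin of the $-w_0 p_0$ correction in the $p_1$ formula) and its subsequent propagation, and matching the boundary condition at $\ell$ so that \eqref{condition1-intro} holds is the remaining delicate point.
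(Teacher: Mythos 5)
Your proposal is correct and follows essentially the same route as the paper: show that the three-piece recursion defining $p$ forces the bespoke weights to collapse to $\pi_0=1$, $\pi_i=0$ for $i\ge 1$, and then read off the bound from Theorem~\ref{theo main intro}. The only cosmetic difference is that the paper verifies the defining system \eqref{eq_deltas} component by component (which is what actually drives Theorem~\ref{theo main intro}'s proof), whereas you compute the weights from \eqref{eq_ai3} by induction on the recurrence and separately check \eqref{condition1-intro} before invoking Proposition~\ref{prop a}; both are valid instantiations of the same calculation.
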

\begin{proof}
In order to use Theorem \ref{theo main intro}, we have to check that equality \eqref{eq_deltas} holds with $\pi_0=1$ and $\pi_i=0$ for all $i\in I\setminus \left\{0\right\}$. 
The first equation of \eqref{eq_deltas} is 
$
-p_1 w_1=\left(-\delta_1s_0+w_0\right)p_0.
$
The following $\ell-1$ equations of \eqref{eq_deltas} are
$$
\frac{1}{\delta_1}w_{0}p_{0}+\frac{1}{\delta_1}w_1p_1-\frac{1}{\delta_{2}}w_{2}p_{2}=-s_1p_1
$$
and
$$
\frac{1}{\delta_{i-1}}w_{i-1}p_{i-1}-\frac{1}{\delta_{i}}w_{i}p_{i}=-s_{i-1}p_{i-1}
$$
for $\in I\setminus \left\{0,1,2\right\}$, which can be written as $p_2w_2\delta_1/\delta_2=\left(w_1+\delta_1s_1\right)p_1+w_0p_0$ and $p_{i}w_{i}= \delta_{i}\left(w_{i-1}/\delta_{i-1}+s_{i-1}\right)p_{i-1}$. If $I$ is finite, the last equation of \eqref{eq_deltas} is
$
w_{\ell}p_{\ell}/\delta_{\ell}=-s_{\ell}p_{\ell}
$
which holds by assumption. 
In order to have $p_i\geq 0$ for all $i\in I$, it is enough that $\delta_1 s_0\geq w_0$ and $w_i+\delta_is_i\geq 0$ for all $i\in I\setminus \left\{0\right\}$. 
Finally, if $I$ is infinite, we have $w\in L^1(p)$ by assumption and $\pi w \in L^1(p)$ is obvious.
\end{proof}

\begin{corollary}
\label{coro panjer}
Let $Z\sim \mathrm{IP}(m;\alpha,\beta,\gamma)$ with support  $\overline{(a,b)}$ with $a\in\R$. 
(i) If $b<\infty$, take $\delta>0$ and $\ell\in\N$ such that $a+\delta \ell<b$ and $\tau_q(a+\delta \ell)=-\delta (\E[Z]-a-\delta \ell)$ and set $I=\left\{0,\ldots,\ell\right\}$; (ii) if $b=\infty$ and $\alpha<1$, take $\delta\leq \inf_{(\E[Z],b)}-\tau_q/\overline{\mathrm{Id}}$ and set $I=\N$. In both cases, set $\mathcal X = \left\{a+\delta i,  i\in I\right\}$ and  
define $X$ as a discrete variable on $\mathcal X$ with density $p=(p_i)_{i\in I}$ given by
\begin{equation*}
\label{eq pi}
p_{i}= p_0\prod_{j=0}^{i-1}\frac{\tau_j+\delta \overline{\mathrm{Id}}_j}{\tau_{j+1}}.
\end{equation*}
Then
$
W_1(X,Z)\leq C_q^0(\tau_q)\delta+C_q^1(\tau_q)\delta^2. 
$
\end{corollary}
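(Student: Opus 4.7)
The plan is to apply Proposition~\ref{prop panjer} directly with $w = \tau_q$ on the uniform grid $x_i = a + \delta i$, choosing $\pi_0 = 1$ and $\pi_i = 0$ for $i \ge 1$; the abstract bound from that proposition will then telescope to the stated form because $\sum_i p_i = 1$.

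First I would verify the standing hypotheses on $(q,w)$. The Stein kernel $\tau_q(x) = \alpha x^2 + \beta x + \gamma$ is a polynomial, hence twice differentiable, and $\tau_q/w \equiv 1$ is trivially bounded. With this choice, $s(x) = (\tau_q q)'(x)/q(x) = \overline{\mathrm{Id}}(x) = \E[Z] - x$, which is continuous, strictly decreasing, lies in $L^1(q)$ since $\E|Z| < \infty$, and satisfies $\E[s(Z)] = 0$ tautologically. The standing assumption on $q$ is built into the definition of integrated Pearson.

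Next I would verify the grid conditions of Proposition~\ref{prop panjer}. Since $a \in \R$ is a finite boundary of an integrated Pearson density, $\tau_q(a) = 0$ (this is forced by the integration-by-parts identity \eqref{steinkkibp} defining $\tau_q$), so $w_0 = 0 \le \delta(\E[Z] - a) = \delta s_0$, which gives the initial inequality $w_0 \le \delta_1 s_0$. For the condition $w_i + \delta s_i \ge 0$ ($i \ge 1$), set $g(x) := \tau_q(x) + \delta \overline{\mathrm{Id}}(x)$, a quadratic with leading coefficient $\alpha$. In case (i) with $b < \infty$, the Stein kernel vanishes at both endpoints and is non-negative between, which forces $\alpha \le 0$; thus $g$ is concave and satisfies $g(a) = \delta(\E[Z] - a) > 0$, $g(\E[Z]) = \tau_q(\E[Z]) > 0$, and $g(a+\delta\ell) = 0$ by the defining equation for $\ell$. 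A concave function that is positive at two points and zero at a third must be non-negative on the interval spanned by them: the second root of $g$ must lie to the left of $a$, so $g \ge 0$ on $[a, x_\ell]$. The terminal equality $w_\ell = -\delta s_\ell$ in the finite case is exactly the equation defining $\ell$ in (i). In case (ii), the assumption $\delta \le \inf_{(\E[Z],b)}(-\tau_q/\overline{\mathrm{Id}})$ rewrites as $g(x) \ge 0$ for $x > \E[Z]$, while $g(x) \ge 0$ for $x \le \E[Z]$ is automatic because both summands are non-negative there.

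Having discharged these hypotheses, Proposition~\ref{prop panjer} produces the product formula for $p_i$ (the first two initial recursions of that proposition collapse to a single rule because $w_0 = 0$), and its bound simplifies, using the constant mesh $\delta_i = \delta$ and $\sum_i p_i = 1$, to
\begin{equation*}
W_1(X,Z) \le C_q^0(\tau_q)\,\delta\sum_{i=0}^\ell p_i + C_q^1(\tau_q)\,\delta^2\sum_{i=0}^\ell p_i = C_q^0(\tau_q)\,\delta + C_q^1(\tau_q)\,\delta^2.
\end{equation*}
The main technical obstacle is the concavity/sign argument securing $g \ge 0$ on $[a, x_\ell]$ in case (i); in case (ii) one additionally needs to check $\tau_q \in L^1(p)$, for which I would apply the ratio test to $w_{i+1} p_{i+1} / (w_i p_i) = 1 + \delta\,\overline{\mathrm{Id}}(x_i)/\tau_q(x_i)$, whose limit being strictly less than $1$ is what the hypothesis $\alpha < 1$ is designed to enforce.
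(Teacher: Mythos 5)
Your verification of the grid hypotheses for Proposition~\ref{prop panjer} in case (i) is sound and differs somewhat from the paper's own argument: where you argue via concavity of $g = \tau_q + \delta\,\overline{\mathrm{Id}}$ (which has negative leading coefficient on bounded support), the paper argues that $-\tau_q/\overline{\mathrm{Id}}$ is decreasing on $(\E[Z],b)$, so once $\delta$ is chosen to make this ratio equal to $\delta$ at $x_\ell$, all earlier grid points satisfy the inequality. Both routes work; yours is arguably more self-contained.

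However, case (ii) contains a genuine gap. You propose checking $\tau_q\in L^1(p)$ via the ratio test applied to $\tau_{i+1}p_{i+1}/(\tau_i p_i) = 1 + \delta\,\overline{\mathrm{Id}}_i/\tau_i$, asserting that the limit is strictly below $1$ and that this ``is what $\alpha<1$ is designed to enforce.'' This is false when $0<\alpha<1$: for an Integrated Pearson law with genuinely quadratic kernel on a half-line, $\tau_q(x_i)\sim\alpha\delta^2 i^2$ while $\overline{\mathrm{Id}}_i\sim-\delta i$, so $\overline{\mathrm{Id}}_i/\tau_i\to 0$ and the ratio tends to $1$ — the ratio test is inconclusive, and it is inconclusive precisely because $\alpha>0$. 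The correct argument (and the one the paper uses) is Raabe's test: one computes
\begin{equation*}
 i\!\left(\frac{\tau_i p_i}{\tau_{i+1}p_{i+1}}-1\right)
 = \frac{-\,i\,\delta\,\overline{\mathrm{Id}}_i}{\tau_i + \delta\,\overline{\mathrm{Id}}_i}
 \;\longrightarrow\; \frac{1}{\alpha},
\end{equation*}
and the hypothesis $\alpha<1$ is exactly what makes this limit exceed $1$. As written, your argument only covers $\alpha=0$ (linear Stein kernel, e.g.\ Gamma), where $\overline{\mathrm{Id}}_i/\tau_i$ stays bounded away from $0$; it silently fails on the whole range $\alpha\in(0,1)$ that the corollary is meant to handle. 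You should replace the ratio-test step with the Raabe-test computation above.
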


\section{Applications}
\label{sn:applications}

\subsection{Exponential approximation}
\label{sn:expapp}

Throughout this subsection, we let $Z \sim \mathrm{Exp}(\lambda)$ an   exponential variable  with parameter $\lambda>0$, so that $q(x) \propto e^{- \lambda x} \mathbb{I}[x \ge 0]$.  The exponential distribution belongs to the Integrated Pearson family and $\tau_q(x)=x/\lambda$. Therefore we  can apply Corollary~\ref{prop borne3} to get $\sup_{h\in \text{Lip}(1)}||(f_h'\tau_q)'||_{\infty}\leq 2$. Direct computations also  provide $\sup_{h\in \text{Lip}(1)}||\tau_q'f_h'||_{\infty}\leq 1 $ (see Lemma~\ref{boundlam} in the appendix for a proof) and thus the constant \eqref{eq:stima} reads as 
\begin{equation*}
    C_q^0(\tau_q) = \frac{3}{2} \mbox{ and } C_q^1(\tau_q)=0;
\end{equation*}
in particular this constant does not depend on $\lambda$.  Now let $X \sim p$ be a discrete random variable. In order to satisfy \eqref{condition1} for $w = \tau_q$   it is necessary that  $\mathbb E[X] = 1/\lambda$ and $\mathrm{Var}[X] = {1}/{\lambda^2}$.

\begin{proposition}

Let   $Y$ be  a geometric random variable with parameter $t \in (0,1)$ (defined on $0,1,2,\ldots $) and $Z\sim \mathrm{Exp}(\lambda)$. Let  
$$
X=\frac{t}{\lambda\sqrt{1-t}}\left(Y-\frac{1-t}{t}\right)+\frac{1}{\lambda}.
$$
Then 
\begin{equation}
    W_1(X,Z)\leq \frac{3t}{2\lambda\sqrt{1-t}}.
\label{ineqexp1}
\end{equation}
\end{proposition}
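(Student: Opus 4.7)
\emph{Proof plan.} The strategy is to reduce to Corollary~\ref{cor:regsp} applied with the Stein kernel weight $w = \tau_q(x) = x/\lambda$, then to identify the bespoke weights $(\pi_i)_{i\in\N}$ explicitly and verify they lie in $[0,1]$.

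Since $Y$ has mean $(1-t)/t$ and variance $(1-t)/t^2$ while $Z$ has mean and variance $1/\lambda$ and $1/\lambda^2$, the definition of $X$ coincides with the canonical affine map \eqref{eq:autosta}, so Proposition~\ref{w=tau2} ensures that condition \eqref{condition1} holds with $w = \tau_q$. The support of $X$ is arithmetic with uniform spacing $\delta = t/(\lambda\sqrt{1-t})$, so combining Corollary~\ref{cor:regsp} with the Stein factors $C_q^0(\tau_q) = 3/2$ and $C_q^1(\tau_q) = 0$ established at the start of this subsection gives
\begin{equation*}
    W_1(X, Z) \;\le\; \tfrac{3}{2}\,\delta \cdot \E\bigl[|\pi(X)| + |1-\pi(X)|\bigr].
\end{equation*}
Hence it will suffice to prove that $0 \le \pi_i \le 1$ for every $i\in\N$, which collapses the expectation above to $1$ and yields \eqref{ineqexp1}.

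To identify the weights I plan to work with the one-step recurrence recorded just after Proposition~\ref{prop a}. Writing $\alpha := \sqrt{1-t}$ and $y_i := \lambda x_i = (1 - \alpha) + it/\alpha$, substituting $p_j = t(1-t)^j$, $w_j = x_j/\lambda$, $s_j = 1/\lambda - x_j$, and using the geometric-series identity $\sum_{j=0}^{i-1} s_j p_j = (\alpha t/\lambda)\, i\, (1-t)^{i-1}$, one reduces the recurrence to
\begin{equation*}
    (1-t)\, y_i\, \pi_i \;=\; y_{i-1}\, \pi_{i-1} + (1-\alpha)\bigl((1-t) - it\bigr), \quad i\ge 1,\qquad \pi_0 = 1.
\end{equation*}
The ansatz $\pi_i = (i+1)(1-\alpha)/y_i$, equivalently $\pi_i w_i p_i = (i+1)(1-\alpha)\, t\, (1-t)^i/\lambda^2$, solves this recurrence, as a one-line induction confirms. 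Positivity of $\pi_i$ is then immediate since $\alpha \in (0,1)$ and $y_i > 0$, while the upper bound $\pi_i \le 1$ rearranges to $i\alpha(1-\alpha) \le it$, i.e.\ $\alpha(1-\alpha) \le (1-\alpha)(1+\alpha)$, which is just $\alpha \le 1+\alpha$. The integrability $\pi w \in L^1(p)$ follows from the geometric decay of $\pi_i w_i p_i$.

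The only step that is not pure substitution is spotting the ansatz for $\pi_i$; once that is in hand, every remaining verification is elementary algebra, so I do not foresee any further obstacle.
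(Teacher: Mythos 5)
Your proof is correct and follows essentially the same route as the paper: you apply Corollary~\ref{cor:regsp} with $w=\tau_q$ and $C_q^0(\tau_q)=3/2$, derive the weights $\pi_i = (i+1)\alpha(1-\alpha)/\bigl(\alpha(1-\alpha)+it\bigr)$ (which matches the paper's formula $(1+i)\tfrac{\sqrt{1-t}-(1-t)}{ti+\sqrt{1-t}-(1-t)}$ exactly, with $\alpha=\sqrt{1-t}$), and verify $0\le\pi_i\le1$ via $\alpha(1-\alpha)\le t$. The only difference is that you spell out the recurrence computation that the paper summarizes as ``After some computations,'' and replace the explicit d'Alembert ratio test by an equivalent geometric-decay observation; nothing substantive diverges.
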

\begin{proof}
The standardization of $X$ follows from \eqref{eq:autosta}. 
After some computations, the sequence of weights $(\pi_i)_{i \in \mathbb N}$ from \eqref{eq_ai3} can be seen to be given by 
$$
\pi_i=(1+i)\frac{\sqrt{1-t}-(1-t)}{t\, i+\sqrt{1-t}-(1-t)}, \quad i \in \N.
$$
Since $\lim_{i\to \infty}\pi_{i+1}w_{i+1}p_{i+1}/(\pi_i w_i p_i)=1-t=\lim_{i\to \infty}w_{i+1}p_{i+1}/( w_i p_i)$, we have $\pi w\in L^1(p)$ and $w\in L^1(p)$ by the d'Alembert rule. All assumptions required for Theorem \ref{theo main intro} are seen to be satisfied. 
As $t\geq \sqrt{1-t}-(1-t)$, we have $\pi_i\in [0,1]$ for all  $t\in (0,1)$ and all $i\in \mathbb{N}$ so that  \eqref{simplll} yields the claim. 
\end{proof}
\begin{remark}
Let  $(Y_n)_{n\ge1}$ be a sequence of  geometric random variables  with parameter $t_n \in (0,1)$. It is well known that, given    any sequence  $(\alpha_n)_{n\ge 1}$ such that $\alpha_n\to 0$ and $t_n/\alpha_n\to \lambda$ it then holds that the sequence  $(\alpha_n Y_n)_{n\ge 1}$ converges in distribution to an exponential distribution with rate $\lambda$ when $n \to \infty$.  Combining  \eqref{ineqexp1}  with \eqref{rema1} we get  that
$$  \left|{\alpha_n}\frac{1-t_n}{t_n} -\frac{1}{\lambda} \right| \le   W_1(\alpha_n Y_n,Z)\leq 
\frac{3t_n}{2\lambda\sqrt{1-t_n}}+\frac{1-\sqrt{1-t_n}}{\lambda}+\left|\alpha_n\frac{1-t_n}{t_n}-\frac{\sqrt{1-t_n}}{\lambda}\right|$$  for all $t_n$ sufficiently small 
(the lower bound is simply  the difference of the means). 
Note that when $\lambda = 1$ and $\alpha_n = t_n/(1-t_n)$, e.g.\ \cite[Proposition 7]{boutsikas2002distance} yields  $W_1(\alpha_n Y_n, Z) \le t_n/(1-t_n)$; this last bound is better than ours in that specific case.

\end{remark}

As anticipated in the introduction, we now tackle an  example   from  \cite{chatterjee2011exponential, chatterjee2011nonnormal}. We shall prove the following. 

\begin{proposition}\label{prop:chatt}
Consider the uniform measure on the set of the
$\binom{n}{n/2}$
eigenvalues of the
Bernoulli-Laplace Markov chain appearing in proportion to their multiplicities. Let $Y$ be a random variable chosen from this distribution and $Z\sim \mathrm{Exp}(1)$. If we set $X=\frac{n}{2}Y+1$, we have 
\begin{equation*}
    W_1(X, Z)  \le \frac{3\sqrt2}{\sqrt{n}} + \frac{3}{n}.
\end{equation*}
\end{proposition}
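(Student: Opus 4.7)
My strategy is to apply Theorem \ref{theo main intro} with target $Z \sim \mathrm{Exp}(1)$ and weight $w = \tau_q(x) = x$. Since $\mathrm{Exp}(1)$ is an integrated Pearson distribution, Proposition \ref{w=tau2} reduces the conditions in \eqref{condition1-intro} to $\mathbb{E}[X]=1$ and $\mathrm{Var}[X]=1$; Proposition \ref{prop borne3} gives $\sup_{h \in \mathrm{Lip}(1)} \| (f_h'\tau_q)' \|_\infty \le 2$, while Lemma \ref{boundlam} (invoked already in the preamble of this subsection) yields $\sup_{h \in \mathrm{Lip}(1)} \| \tau_q' f_h' \|_\infty \le 1$; hence $C_q^0(\tau_q)=3/2$ and $C_q^1(\tau_q)=0$. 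Theorem \ref{theo main intro} therefore reduces the claim to estimating
\begin{equation*}
\frac{3}{2}\, \mathbb{E}\bigl[ |\pi(X)|\delta^+(X) + |1-\pi(X)|\delta^-(X) \bigr] \ \le\ \frac{3\sqrt 2}{\sqrt n} + \frac{3}{n}.
\end{equation*}

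\textbf{Step 1 (the distribution of $X$).} I would write down explicitly the eigenvalues of the Bernoulli-Laplace chain, indexed so that $\lambda_i$ is decreasing in $i$, together with their multiplicities; after the affine map $x_i = (n/2)\lambda_i+1$ and reindexing so the points are increasing, this yields the support $\mathcal X = \{x_i\}$ and the probabilities $p_i$ (built from $\binom{n}{i} - \binom{n}{i-1}$ normalised by $\binom{n}{n/2}$). Standard spectral computations for this chain give that $Y$ has mean $0$ and variance $4/n^2$ (the classical Markov chain scaling), so $X$ automatically satisfies $\mathbb{E}[X]=1$ and $\mathrm{Var}[X]=1$.

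\textbf{Step 2 (the bespoke weights).} Plugging $s(x)=1-x$ and $w(x)=x$ into \eqref{eq_ai3-intro} and using the polynomial form of the eigenvalues (quadratic in $i$) together with the algebraic identity $(x_j-x_i)s_j+w_j=(x_j-x_i)(1-x_j)+x_j$, the partial sums collapse into a closed expression for $\pi_i$. I would then verify that $0\le \pi_i\le 1$ for all $i$; one route is to check the sufficient conditions of Proposition \ref{prop borne sup} and Proposition \ref{prop borne inf} on the sequences $p_iw_i - \delta_{i+1}\sum_{j\le i}s_jp_j$, which inherit the monotonicity/unimodality properties of the binomial coefficients. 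With $\pi_i\in[0,1]$, the absolute values disappear and the bound becomes $\tfrac{3}{2}\sum_i \bigl(\pi_i\delta^+_i+(1-\pi_i)\delta^-_i\bigr)p_i$.

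\textbf{Step 3 (mesh estimates and assembly).} The eigenvalue spacings satisfy $\lambda_i-\lambda_{i+1}=O(i/n^2)$, so the $X$-mesh is $\delta_i=O(i/n)$ in the bulk and only the endpoint spacing (near $x_0$) contributes an $O(1)/n$ term on its own. I would split the sum into a bulk piece and a boundary piece. In the bulk, bounding $\pi_i\delta^+_i+(1-\pi_i)\delta^-_i$ by $\max(\delta^+_i,\delta^-_i)$ and using Cauchy--Schwarz against $p_i$ gives
\begin{equation*}
\sum_i \max(\delta^+_i,\delta^-_i)\, p_i \ \le\ \sqrt{\mathbb{E}[\delta(X)^2]} \ \lesssim\ \sqrt{\mathbb{E}[X^2]/n} \ =\ \sqrt{2}/\sqrt n,
\end{equation*}
which is where the factor $\sqrt 2$ originates; the single boundary term contributes $O(1/n)$. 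Tracking constants through this computation gives exactly $\tfrac{3\sqrt 2}{\sqrt n}+\tfrac{3}{n}$.

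\textbf{Main obstacle.} The step I expect to be most delicate is Step 2: obtaining a usable closed form for $\pi_i$ that simultaneously makes $\pi_i\in[0,1]$ transparent and admits a sharp mesh-weighted average in Step 3. The binomial multiplicities do not combine trivially with the quadratic factors $s_j,w_j$, and some care is needed to telescope the partial sums. A useful check along the way is Proposition \ref{prop:gaussthirdmom}'s analogue for the exponential (i.e.\ relating $\mathbb{E}[\pi(X)]$ to a third moment of $Y$), which offers a sanity test against the known value $\mathbb{E}[Y^3]$ for the Bernoulli-Laplace spectrum.
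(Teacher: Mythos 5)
Your overall framework is the paper's (Theorem \ref{theo main intro} with $w=\tau_q(x)=x$, $C_q^0=3/2$, $C_q^1=0$), and Steps 1--2 are essentially what the paper does, though the paper gets a clean closed form directly: with $\ell=n/2$, $x_i=i(i+1)/\ell$, $p_i=\bigl(\binom{2\ell}{\ell-i}-\binom{2\ell}{\ell-i-1}\bigr)/\binom{2\ell}{\ell}$, and $\pi_i=\frac{(\ell-i)(i+1)}{\ell(2i+1)}$, which is visibly decreasing from $1$ to $0$, so there is no need to route through Propositions \ref{prop borne sup}--\ref{prop borne inf}.

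The real issue is Step 3, where your Cauchy--Schwarz conversion contains a factor-of-two error. From $x_i=i(i+1)/\ell$ one has $\delta_i^+=2(i+1)/\ell$, hence $(\delta_i^+)^2=\frac{4}{\ell}\,x_i+\frac{4(i+1)}{\ell^2}$, so the correct relation is $\mathbb{E}[\delta(X)^2]\approx \frac{4}{\ell}\,\mathbb{E}[X]=\frac{8}{n}$, \emph{not} $\mathbb{E}[\delta(X)^2]\lesssim \mathbb{E}[X^2]/n=2/n$ (those two differ by a factor of $4$; for small $i$ your inequality $\delta_i^2\lesssim x_i^2/n$ fails outright, e.g.\ at $i=1$ the ratio is $\Theta(\ell)$). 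With the corrected $\sqrt{\mathbb{E}[\delta(X)^2]}\approx 2\sqrt{2}/\sqrt{n}$ you would still recover $\frac{3}{2}\cdot\frac{2\sqrt{2}}{\sqrt n}=\frac{3\sqrt 2}{\sqrt n}$, but as written your plan ``proves'' a bound of $\frac{3}{\sqrt{2n}}$, which is better than the claim and is not actually established. The paper sidesteps Cauchy--Schwarz entirely: it bounds $\pi_i\delta_i^+ + (1-\pi_i)\delta_i^-\le 2(1+i)/\ell$ (using $\pi_i\in[0,1]$ and $\delta_i^+\ge\delta_i^-$), then computes the first moment $\frac{2}{\ell}\bigl(1+\sum_i i\,p_i\bigr)=\frac{4^\ell+\binom{2\ell}{\ell}}{\ell\binom{2\ell}{\ell}}$ in closed form and applies $\binom{2\ell}{\ell}\ge 2^{2\ell-1}/\sqrt{\ell}$ to get $\le 2/\sqrt{\ell}+1/\ell$. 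This is both sharper in principle (the exact value is $\sim\sqrt{\pi/\ell}$, giving the $3\sqrt{\pi/2}/\sqrt{n}$ behaviour mentioned in the paper's remark) and avoids needing explicit control of $\mathbb{E}[\delta(X)^2]$. I would redo Step 3 along those lines rather than through Cauchy--Schwarz.
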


\begin{proof}
We follow \cite[Section 4]{chatterjee2011exponential}, where it is explained that  $X$ is  the random variable with values in $\mathcal X = \{x_0, \ldots, x_\ell\}$ (here $\ell=n/2$) with  $x_i=i(i+1)/\ell$ for all $i = 0, 1, \ldots, \ell$ and  $p_i = (\binom{2\ell}{\ell-i} - \binom{2\ell}{\ell-i-1}) / \binom{2\ell}{\ell}$ for $0 \le i \le \ell$. Then $\mathbb E[X] = \mathrm{Var}[X] = 1$ so that no standardization is necessary when comparing $X$ to $Z$.  Direct computations yield the  coefficients $\pi_i$ from \eqref{eq_ai3}: 
\begin{equation*}
\pi_i =     \frac{(\ell-i)(i+1)}{\ell(2i+1)}, i = 0, \ldots, \ell. 
\end{equation*}
Clearly $\pi_0 = 1 \geq \pi_{1} \geq \cdots \geq \pi_{\ell-1}  \geq \pi
_\ell   = 0$. In order to apply  Theorem \ref{theo main intro} we note how 
\begin{equation*}
    0 \le \pi_i \delta^+_i + (1-\pi_i) \delta^-_i \le  2\frac{1+i}{\ell}
\end{equation*} 
for all $i = 0, \ldots, \ell$. Direct computations yield 
\begin{equation*}
    \frac{2}{\ell}\left(1+\sum_{i=0}^\ell i p_i\right) = \frac{4^\ell + \binom{2\ell}{\ell}}{\ell \binom{2\ell}{\ell}} \le  \frac{2}{\sqrt{\ell}} + \frac{1}{\ell}
\end{equation*}
(where we used the well-known bound $\binom{2\ell}{\ell} \ge 2^{2\ell-1}/\sqrt \ell$ on the denominator) so that, multiplying by $3/2$, we get the claim. 
\end{proof}

\begin{remark}
In \cite[Section 4]{chatterjee2011exponential}     the proposed bound is in Kolmogorov distance. The paper  \cite[Theorem 3.2]{chatterjee2011nonnormal} considers the same problem in Wassertein distance and gives, through a different application of Stein's method, a bound of the same order as ours but with  a constant equal to 12. Numerical explorations indicate that the value of the constant given by Theorem \ref{theo main intro} is, in fact, close to $3 \sqrt{\pi/2}$; we have not been able to prove this. 
\end{remark}

\subsection{Gamma approximation} 
\label{sec:gamma}
Throughout this subsection, we let $Z \sim \Gamma( \alpha, \beta)$ be a Gamma variable with parameters $\alpha$ and $\beta$ so that $q(x) \propto e^{-\beta x} x^{\alpha-1} \mathbb{I}[x>0]$. This is an Integrated Pearson random variable with Stein kernel  $\tau_q(x)=x/\beta$. We know that $\sup_{h\in \text{Lip}(1)}||(f_h'\tau_q)'||_{\infty}\leq 2$ and Theorem 2.1 in \cite{dobler2018gamma} (see also their Remark 2.2) ensures that $\sup_{h\in \text{Lip}(1)}||f_h'\tau_q'||_{\infty}\leq 2$. We have thus  
$$C^0_q(\tau_q)\leq 2 \mbox{ and }C_q^1(\tau_q) = 0$$
(the first  bound is not optimal and more effort could be invested to lower the value of the constant, but we do not pursue this here).
Now let $X \sim p$ be a discrete random variable. In order to satisfy \eqref{condition1} for $w = \tau_q$   it is necessary that  $\mathbb E[X] = \alpha/\beta$ and $\mathrm{Var}[X] = {\alpha}/{\beta^2}$. 
The following example is an application of Corollary \ref{coro panjer}.
\begin{proposition}
Let $X$ be a discrete variable on $\left\{\mesh i, i\in\N\right\}$, where $\delta\in ]0,1/\beta]$, with density
$$
p_i=\frac{\alpha(1-\mesh \beta)^{i-1}\Gamma\left(\frac{\alpha}{1-\mesh\beta}+i\right)}{\Gamma(i+1)\Gamma\left(\frac{\alpha}{1-\mesh\beta}+1\right)}p_0
$$
for $i\in\N\setminus \left\{0\right\}$ and $Z\sim \Gamma(\alpha,\beta)$. 
Then, we have
$
W_1(X,Z)\leq 2\delta.
$
\end{proposition}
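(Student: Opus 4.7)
The plan is to recognize this as a direct application of Corollary \ref{coro panjer} to the gamma target, and to verify that the explicit density given in the statement matches the one produced by the corollary's recursion.

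First I would check that the hypotheses of part (ii) of Corollary \ref{coro panjer} are satisfied. The gamma law $Z\sim \Gamma(\alpha,\beta)$ is integrated Pearson with $a=0$, $b=\infty$, Stein kernel $\tau_q(x)=x/\beta$, and $\overline{\mathrm{Id}}(x)=\alpha/\beta-x$; in particular the quadratic coefficient of $\tau_q$ is $0<1$, so the condition ``$\alpha<1$'' of the corollary (in its IP sense, not the gamma shape) holds. A quick computation shows that on $(\alpha/\beta,\infty)$ the function $-\tau_q/\overline{\mathrm{Id}}=x/(\beta x-\alpha)$ is decreasing with limit $1/\beta$ as $x\to\infty$, so its infimum equals $1/\beta$, and the admissibility condition $\delta\leq 1/\beta$ is exactly what is assumed.

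Second, I would unpack the product in the density formula of Corollary \ref{coro panjer}. Setting $x_j=\delta j$, one gets $\tau_j=\delta j/\beta$, $\overline{\mathrm{Id}}_j=\alpha/\beta-\delta j$, hence
\begin{equation*}
\frac{\tau_j+\delta\overline{\mathrm{Id}}_j}{\tau_{j+1}}
=\frac{(1-\delta\beta)j+\alpha}{j+1}.
\end{equation*}
Multiplying from $j=0$ to $i-1$ yields (assuming $\delta<1/\beta$, the boundary case being a direct limit)
\begin{equation*}
p_i=p_0\,\frac{(1-\delta\beta)^i}{i!}\prod_{j=0}^{i-1}\Bigl(j+\tfrac{\alpha}{1-\delta\beta}\Bigr)
=p_0\,\frac{(1-\delta\beta)^i\,\Gamma\!\left(\tfrac{\alpha}{1-\delta\beta}+i\right)}{\Gamma(i+1)\,\Gamma\!\left(\tfrac{\alpha}{1-\delta\beta}\right)},
\end{equation*}
which is algebraically the same as the expression in the statement once one uses $\Gamma(\alpha/(1-\delta\beta)+1)=\frac{\alpha}{1-\delta\beta}\,\Gamma(\alpha/(1-\delta\beta))$.

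Third, I would read off the bound from Corollary \ref{coro panjer}, namely $W_1(X,Z)\leq C_q^0(\tau_q)\delta+C_q^1(\tau_q)\delta^2$, and plug in the Stein factors established earlier in the subsection: $C_q^0(\tau_q)\leq 2$ (using Corollary \ref{prop borne3} for $\|(f_h'\tau_q)'\|_\infty\leq 2$ together with the bound $\|f_h'\tau_q'\|_\infty\leq 2$ from \cite{dobler2018gamma}) and $C_q^1(\tau_q)=0$ since $\tau_q''\equiv 0$. This gives the announced $W_1(X,Z)\leq 2\delta$. No step is really an obstacle: the only non-trivial verification is the telescoping of the product into the Pochhammer/gamma form, which is routine.
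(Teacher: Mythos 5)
Your proposal is correct and follows exactly the paper's approach: apply Corollary \ref{coro panjer}(ii) to the gamma target $\mathrm{IP}(\alpha/\beta;0,1/\beta,0)$, check the admissibility condition on $\delta$, and combine with the Stein factors $C_q^0(\tau_q)\leq 2$, $C_q^1(\tau_q)=0$ established at the start of the gamma subsection. The paper's own proof is terser — it only records the one nontrivial check, namely that $\tau_i+\delta\,\overline{\mathrm{Id}}_i\geq 0$ (phrased directly as $\tau_i/(-\overline{\mathrm{Id}}_i)=\delta i/(\beta\delta i-\alpha)\geq 1/\beta\geq\delta$ when $\overline{\mathrm{Id}}_i\leq 0$), which is equivalent to your computation that $\inf_{(\alpha/\beta,\infty)}(-\tau_q/\overline{\mathrm{Id}})=1/\beta$ — and leaves the telescoping of the product into the Pochhammer/Gamma form of $p_i$ to the reader. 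Your extra verifications (the infimum, the product identity, and the $\Gamma(z+1)=z\Gamma(z)$ massaging) are routine and consistent with the paper; no gap.
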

\begin{proof}
The result follows from Corollary \ref{coro panjer}. The condition $\tau_i+\delta \overline{\mathrm{Id}}_i\geq 0$ is verified  because 
$$
\frac{\tau_i}{-\overline{\mathrm{Id}}_i}=\frac{\delta i}{\beta\delta i-\alpha}\geq \frac{1}{\beta}\geq \delta
$$
when $\overline{\mathrm{Id}}_i=\alpha/\beta -\delta i\leq 0$.
\end{proof}

Another example we can tackle easily is the comparison of gamma distribution with a negative binomial law where  $Y \sim \mathrm{NB}(n, p)$  if $p(i) = \binom{n+i-1}{n-1}p^n (1-p)^i$ for $i \in \mathbb N$. Using the same parameterization as in \cite{adell1994approximating} we obtain the following result. 

\begin{proposition}
    Let $Y$ be negative binomial with parameters $n\in \N_0$ and $p = \beta/(t+\beta)$ for $\beta, t >0$. 
    Let
    $$
    X=\frac{1}{\sqrt{t(\beta+t)}}\left(Y-\frac{tn}{\beta}\right)+\frac{n}{\beta}
    $$
    and $Z\sim \Gamma(n,\beta)$.
    Then 
    \begin{equation}
        \label{nbga}
W_1(X, Z)   \le  \frac{2}{\sqrt{t(\beta+t)}}.
    \end{equation}
\end{proposition}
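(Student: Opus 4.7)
The plan is to apply Corollary \ref{cor:regsp} with the weight $w=\tau_q$, the Stein kernel of $Z\sim \Gamma(n,\beta)$, which is $w(x)=x/\beta$. The Stein factor bounds $C_q^0(\tau_q)\le 2$ and $C_q^1(\tau_q)=0$ recalled at the start of this section reduce the problem to verifying $0\le \pi_i\le 1$ for all $i\in\N$, after which the corollary will immediately yield $W_1(X,Z)\le 2\delta=2/\sqrt{t(\beta+t)}$. To invoke the corollary I will first check the prerequisites: the standardization of $X$ gives $\E[X]=n/\beta=\E[Z]$ and $\Var[X]=n/\beta^2=\Var[Z]$, so by Proposition \ref{w=tau2} the moment conditions \eqref{condition1} hold with $w=\tau_q$; and the support of $X$, namely $\{x_i=\delta(i-nt/\beta)+n/\beta:i\in\N\}$ with $\delta:=1/\sqrt{t(\beta+t)}$, has constant mesh equal to $\delta$.

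The heart of the proof is the explicit computation of the weights $\pi_i$ from \eqref{eq_ai3}. Setting $q=t/(\beta+t)$ and $p=\beta/(\beta+t)=1-q$, the negative binomial recurrence $(j+1)p_{j+1}=(n+j)q\,p_j$ telescopes the partial sums $\sum_{j=0}^{i-1}j^k p_j$ for $k=0,1,2$ into expressions of the form $\alpha_k(i)T_i+\gamma_k(i)p_i$, where $T_i:=\sum_{j=0}^{i-1}p_j$ (the $k=1$ case reading $p\sum j p_j=qnT_i-qi p_i$, and the $k=2$ case obtained by a second application of the recurrence). When these are plugged into the formula \eqref{eq_ai3} for $A_i:=\pi_i w_i p_i$, I expect the coefficient of $T_i$ to vanish, as a reflection of the matched mean and variance imposed by \eqref{condition1}, leaving $A_i$ proportional to $p_i$; after some algebra the coefficient works out to $(1-\sqrt q)(i+n)/\beta^2$, and dividing by $w_i p_i=(x_i/\beta)p_i$ with $\beta x_i=pi/\sqrt q+n(1-\sqrt q)$ produces
\begin{equation*}
\pi_i \,=\, \frac{\sqrt q\,(1-\sqrt q)\,(i+n)}{pi+n\sqrt q\,(1-\sqrt q)},\qquad i\in\N.
\end{equation*}

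Once this closed form is in hand, the bounds $0\le\pi_i\le 1$ are immediate: both numerator and denominator are positive, and $\pi_i\le 1$ reduces to $\sqrt q(1-\sqrt q)\le p=(1-\sqrt q)(1+\sqrt q)$, i.e.\ $\sqrt q\le 1+\sqrt q$. Integrability $w,\pi w\in L^1(p)$ follows by the d'Alembert ratio test since $p_{i+1}/p_i=q(n+i)/(i+1)\to q<1$ while the ratios of $w_{i+1}/w_i$ and $\pi_{i+1}/\pi_i$ both tend to $1$. As a sanity check, $\pi_0=1$ is recovered and specialising to $n=1$ reproduces the geometric weights computed in Section \ref{sn:expapp}. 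The main obstacle will be the explicit derivation of $\pi_i$: the partial second moment of the negative binomial has to be handled with care, and a sign error in either application of the recurrence easily generates a spurious $O(i^2)$ term in the numerator that would destroy the upper bound $\pi_i\le 1$ and hence the whole argument.
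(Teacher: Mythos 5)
Your proposal is correct and follows essentially the same route as the paper: standardize via \eqref{eq:autosta}, compute the weights $\pi_i$ explicitly in closed form, observe $0\le\pi_i\le 1$, and invoke Corollary \ref{cor:regsp} with $C_q^0(\tau_q)\le 2$. Your expression $\pi_i=\sqrt{q}(1-\sqrt{q})(i+n)/(pi+n\sqrt{q}(1-\sqrt{q}))$ is algebraically identical to the paper's $\pi_i=(n+i)(\sqrt{t(\beta+t)}-t)/(\beta i+n(\sqrt{t(\beta+t)}-t))$, and the remaining details (integrability by the ratio test, the sanity check at $n=1$) are filled in correctly.
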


\begin{proof}
The standardization of $X$ follows from \eqref{eq:autosta}.
The sequence of weights $(\pi_i)_{i \in \mathbb N}$ from \eqref{eq_ai3}  is given by 
    \begin{equation*}
        \pi_i = \frac{(n + i)(\sqrt{t(\beta+t)} -t)}{\beta i+n(\sqrt{t(\beta+t)} -t)}
    \end{equation*}
    for all $i \in \mathbb{N}$. 
    The sequence of weights is decreasing in $i$ and satisfies 
    $0 \le \pi_i \le 1$
         for all admissible $i, n, \beta$ and $t$. 
    A straightforward application of Theorem \ref{theo main intro} (Corollary \ref{cor:regsp}) yields the claim. 
\end{proof}

\begin{remark}
    The bound in \cite{adell1994approximating} is given in Kolmogorov distance and hence not directly comparable to ours; it may be interesting to note that \cite{adell1994approximating}'s bound does not depend on the parameter  $\beta$ so that the Kolmogorov distance does not tend to 0 as $\beta \to \infty$ whereas the Wasserstein distance does. 
\end{remark}

\subsection{Beta approximation}
\label{sn:beta}
Throughout this subsection, we let  $Z \sim \text{Beta}( \alpha, \beta)$ be a beta variable with parameters $\alpha,  \beta >0$ so that $q(x) \propto x^{\alpha-1} (1-x)^{\beta-1} \mathbb{I}[0<x<1]$. The beta distribution belongs to the  Integrated Pearson family,  with Stein kernel  $\tau_q(x)=x(1-x)/(\alpha+\beta)$.  As in the previous setting we could bound $\sup_{h\in \text{Lip}(1)}||(f'_h \tau_q)'||_{\infty} \leq 2$. To bound $C^0_q(\tau_q)$ and $C^1_q(\tau_q)$, we have the following result; the first statement comes from Lemma 3.4 in \cite{goldstein2013stein}.
\begin{lemma}
\label{lem beta bound}
Let $Z\sim \mathrm{Beta}(\alpha,\beta)$. 
We have
$$
\sup_{h\in \text{Lip}(1)}||f_h'||_{\infty}\leq (b_0+b_1)(\alpha+\beta),
$$
with $b_0$ and $b_1$ given in equations \eqref{eqn:b0} and \eqref{eqn:b1} in the Appendix, and if $\alpha,\beta<1$
\begin{equation}
\label{eq lem beta}
\sup_{h\in \text{Lip}(1)}||\tau_q'f_h'||_{\infty}\leq \frac{2}{1+\min(\alpha,\beta)}.
\end{equation}
\end{lemma}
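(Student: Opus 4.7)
The first inequality is quoted directly from \cite[Lemma 3.4]{goldstein2013stein}, which treats the beta case in detail; I would simply cite it.

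For the second inequality, the plan is to invoke Lemma~\ref{prop borne 2}. Multiplying the bound $|f_h'| \le 2\Gamma_1\Gamma_2/(q\tau_q^2)$ therein by $|\tau_q'|$ reduces the task to showing
\begin{equation*}
\Phi(x) := \frac{|\tau_q'(x)|\,\Gamma_1(x)\,\Gamma_2(x)}{q(x)\,\tau_q^2(x)} \le \frac{1}{1+\min(\alpha,\beta)} \quad\text{for all } x \in (0,1).
\end{equation*}
For $Z\sim\mathrm{Beta}(\alpha,\beta)$ one has $\tau_q(x) = x(1-x)/(\alpha+\beta)$ and $\tau_q'(x) = (1-2x)/(\alpha+\beta)$, so $|\tau_q'|/\tau_q^2 = (\alpha+\beta)|1-2x|/(x(1-x))^2$, and it remains to estimate the ratio $\Gamma_1\Gamma_2/q$.

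I would first verify the bound at the endpoints. For $\alpha < 1$, using the asymptotics $\Gamma_1(x)\sim x^{\alpha+1}/(\alpha(\alpha+1)B(\alpha,\beta))$ and $q(x)\sim x^{\alpha-1}/B(\alpha,\beta)$ as $x\to 0^+$, together with $\Gamma_2(0) = \mu = \alpha/(\alpha+\beta)$, a direct calculation yields $\lim_{x\to 0^+}\Phi(x) = 1/(\alpha+1)$. Applying the same computation to the random variable $1-Z \sim \mathrm{Beta}(\beta,\alpha)$ (equivalently, using the symmetry $x \mapsto 1-x$) gives $\lim_{x\to 1^-}\Phi(x) = 1/(\beta+1)$. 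So at the boundaries $\Phi$ meets the target bound.

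The main obstacle is extending the estimate to the interior $(0,1)$. By the $x\leftrightarrow 1-x$ symmetry it suffices to prove $\Phi(x) \le 1/(1+\alpha)$ on $(0,1/2]$ when $\alpha \le \beta$. A natural approach is to use the identity $q(x)\tau_q(x) = \int_0^x(\mu-y)q(y)\,dy$ to rewrite $\Gamma_1(x), \Gamma_2(x)$ in integral form and, via a monotonicity argument in $x$, to conclude that $\Phi$ attains its supremum at $x=0$. The assumption $\alpha,\beta < 1$ plays a dual role: it produces the specific boundary limit $1/(1+\alpha)$ through the singularity of $q$ at the endpoint, and it is also the regime where this boundary value dominates every interior value, ruling out a hidden interior maximum. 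Proving this monotonicity (or a clever pointwise comparison) is the technically delicate step.
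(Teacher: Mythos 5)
The first inequality is indeed just a citation of \cite[Lemma 3.4]{goldstein2013stein}, exactly as in the paper; no issue there.

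For the second inequality you have identified the correct starting point (the bound $|f_h'|\le 2\Gamma_1\Gamma_2/(q\tau_q^2)$ from Lemma~\ref{prop borne 2}), and your endpoint computations are correct: with $\alpha<1$ one has $\Gamma_1(x)\sim x^{\alpha+1}/(\alpha(\alpha+1)B(\alpha,\beta))$, $q(x)\sim x^{\alpha-1}/B(\alpha,\beta)$, $\tau_q(x)\sim x/(\alpha+\beta)$, $\tau_q'(x)\to 1/(\alpha+\beta)$ and $\Gamma_2(0)=\alpha/(\alpha+\beta)$, which gives $\Phi(0^+)=1/(1+\alpha)$, and symmetrically $\Phi(1^-)=1/(1+\beta)$. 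However, the step you explicitly defer --- ``showing $\Phi$ attains its supremum at the boundary via a monotonicity argument'' --- is precisely the entire content of the proof, and you have not supplied it or even sketched why it should be true. This is a genuine gap.

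Moreover, the route you propose is not the one the paper takes, and proving that $\Phi$ \emph{itself} is monotone on $(0,1/2]$ is likely harder than what is actually needed. The paper instead factors the problem: on $(0,1/2]$ it uses that $\Gamma_2'=-\bar F\le 0$, so $\Gamma_2(x)\le\Gamma_2(0)=\alpha/(\alpha+\beta)$ trivially, and then it suffices to show the remaining factor satisfies $\tau_q'\Gamma_1/(q\tau_q^2)\le M_0:=(\alpha+\beta)/(\alpha(1+\alpha))$, whose product with $\Gamma_2(0)$ is $1/(1+\alpha)$. That single-factor inequality is then reduced, by differentiating twice (each time using that the quantity vanishes at $0$), to a sign condition on an explicit cubic polynomial $g_0$, which is verified from $g_0(0)\le 0$, $g_0(1/2)\le 0$, $g_0'(1/2)\ge 0$ and the fact that $g_0''$ is affine; the hypotheses $\beta\le 1$ (resp.\ $\alpha\le 1$ on the other half) are used exactly here. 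So your boundary-value observations are a useful sanity check but do not by themselves yield the result, and the decomposition into a monotone $\Gamma_2$-factor plus a polynomial reduction is the missing idea you would need to make your sketch into a proof.
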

We thus have
\begin{equation*}
    C^0_q(\tau_q) \leq \begin{cases}
    1+\frac{1}{1+\min(\alpha,\beta)}&\text{if  } \alpha,\beta<1\\
    1+\frac{b_0+b_1}{2}&\text{else}
    \end{cases},
    \quad C^1_q(\tau_q) \leq \frac{b_0+b_1}{3}
\end{equation*} 
and
$$
C_q(\tau_q) \leq 1+\frac{b_0+b_1}{2}.
$$
Now let $X \sim p$ be a discrete random variable. In order to satisfy \eqref{condition1} for $w = \tau_q$   it is necessary that  $\mathbb E[X] = \alpha/(\alpha+ \beta)$ and $\mathrm{Var}[X] = {\alpha \beta}/{((\alpha+\beta)^2(1+\alpha+\beta)}$. 


\begin{proposition}\label{prop:ourgoldrein}
Let $Y$ be a Polya variable with parameters $\alpha>0$, $\beta>0$, $m\geq 1$, $n\geq 1$ and $Z\sim \mathrm{Beta}(A,B)$ with $A=\alpha/m$ and $B=\beta/m$. 
Set
$$
X=\frac{1}{\sqrt{n\left(A+B+n\right)}} \left(Y-\frac{nA}{A+B}\right)+\frac{A}{A+B}.
$$
Then
\begin{equation}\label{boundw1wabe}
W_1(X,Z)\leq \left(1+\frac{b_0+b_1}{2}\right)\frac{1}{\sqrt{n\left(A+B+n\right)}} 
\end{equation}
and for $\alpha,\beta\leq 1$
\begin{equation*}
W_1(X,Z)\leq \frac{2+\min(\alpha,\beta)}{1+\min(\alpha,\beta)}\frac{1}{\sqrt{n\left(A+B+n\right)}} +\frac{b_0+b_1}{3}\frac{1}{n\left(A+B+n\right)}. 
\end{equation*}
\end{proposition}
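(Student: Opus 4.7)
The plan is to apply the machinery of Theorem \ref{theo main intro} (and its variant Theorem \ref{theo2dform}) with $w=\tau_q$ to the standardized Polya variable $X$ and the Beta target $Z\sim\mathrm{Beta}(A,B)$. Since the Beta distribution is Integrated Pearson, Proposition \ref{w=tau2} reduces the standardization condition \eqref{condition1} to the matching of mean and variance, and the formula \eqref{eq:autosta} shows that the specific affine transformation chosen for $X$ does exactly that. The support of $X$ is the set $\mathcal X=\{x_i\}_{i=0}^n$ where $x_i=(i-nA/(A+B))/\sqrt{n(A+B+n)}+A/(A+B)$, so the mesh is constant and equal to $\mesh:=1/\sqrt{n(A+B+n)}$, i.e.\ $\mesh^-_i=\mesh^+_i=\mesh$ for all admissible $i$.

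The first key computation is to plug $w_j=\tau_q(x_j)=x_j(1-x_j)/(A+B)$, $s_j=A/(A+B)-x_j$ and the Polya mass function $p_j$ into formula \eqref{eq_ai3} and to show that the resulting weights $\pi_i$ take an explicit closed form in which telescoping or induction (using the recursion for $p_j$) eliminates the summation; I expect a ratio-of-products expression analogous to the one appearing, e.g., for the Bernoulli--Laplace chain in Proposition \ref{prop:chatt}. The main obstacle is to then verify the two-sided inequality $0\le\pi_i\le 1$ for every $i\in\{0,\ldots,n\}$ and every admissible $(\alpha,\beta,m,n)$. My first attempt would be a direct monotonicity argument (showing that the sequence $\pi_i$ is non-negative and bounded above by $\pi_0=1$, which is forced by \eqref{eq_ai3}), and as a fallback the sufficient conditions collected in Propositions \ref{prop borne sup} and \ref{prop borne inf} could be checked, since the boundary requirements $w_0\le\mesh s_0$ and $w_{\ell-1}p_{\ell-1}\le-\mesh s_\ell p_\ell$ should be easy to verify from the explicit form of $\tau_q$ and the Polya probabilities.

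Once $0\le\pi_i\le 1$ is established, Corollary \ref{cor:regsp} applies because the mesh is constant: the expectation $\E[|\pi(X)|+|1-\pi(X)|]$ collapses to $1$. For the first bound \eqref{boundw1wabe} the plan is to invoke Theorem \ref{theo2dform} rather than Theorem \ref{theo main intro}, since Theorem \ref{theo2dform} produces a single constant $C_q(\tau_q)\le 1+(b_0+b_1)/2$ (using Lemma \ref{lem beta bound} for $\|f_h'\|_\infty$ and the universal bound $\|(f_h'\tau_q)'\|_\infty\le 2$ from Proposition \ref{prop borne3}), which yields precisely $C_q(\tau_q)\,\mesh$. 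For the second (sharper) bound in the regime $\alpha,\beta\le 1$, I would instead apply Theorem \ref{theo main intro} directly via Corollary \ref{cor:regsp}, using the improved bound $\|\tau_q'f_h'\|_\infty\le 2/(1+\min(\alpha,\beta))$ from \eqref{eq lem beta} together with $\|(f_h'\tau_q)'\|_\infty\le 2$ to control $C_q^0(\tau_q)\le 1+1/(1+\min(\alpha,\beta))$, and $\|w''\|_\infty=2/(A+B)$ together with $\|f_h'\|_\infty\le(b_0+b_1)(\alpha+\beta)$ to control $C_q^1(\tau_q)\le(b_0+b_1)/3$. Substituting these two constants and $\mesh=1/\sqrt{n(A+B+n)}$ into \eqref{simplll} yields the claimed two-term bound.

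The delicate point throughout is the explicit evaluation of $\pi_i$ and the verification that it lies in $[0,1]$; once that is in hand, the rest is a matter of bookkeeping with the Stein factors already established earlier in the paper.
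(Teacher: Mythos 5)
Your plan tracks the paper's actual proof very closely: standardize via \eqref{eq:autosta}, note the uniform mesh $\mesh=1/\sqrt{n(A+B+n)}$, compute the explicit $\pi_i$, verify $0\le\pi_i\le1$, then invoke Theorem~\ref{theo2dform} with $C_q(\tau_q)\le 1+(b_0+b_1)/2$ for the first bound and Corollary~\ref{cor:regsp} with $C_q^0(\tau_q)\le(2+\min)/(1+\min)$, $C_q^1(\tau_q)\le(b_0+b_1)/3$ for the refined bound when $\alpha,\beta\le1$. That is exactly the structure of the paper's argument, and your bookkeeping of the Stein factors is correct.

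Two remarks. First, you omit a check that the paper does carry out: before any of the Stein machinery on $(0,1)$ can be used, one must verify that the support $\mathcal X\subset(0,1)$, i.e.\ $x_0>0$ and $x_n<1$ (these follow from the specific form of the affine shift, but they are not automatic and should be stated). Second, the step you flag as "delicate" genuinely is: you propose a direct monotonicity argument for $0\le\pi_i\le1$, but the explicit $\pi_i$ here is a ratio of two quadratics in $i$ and is not monotone in general, so that route would not go through as stated. The paper instead proves $\pi_i\ge0$ and $\pi_i\le1$ separately via sign analysis of the numerator and denominator factors (and, as a second proof, via the sufficient conditions of Propositions~\ref{prop borne sup} and~\ref{prop borne inf}, which is the fallback you correctly identify). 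So you should drop the monotonicity hope and commit to one of those two routes; everything else in your plan is sound and matches the paper.
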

\begin{proof}
The standardization of $X$ follows from \eqref{eq:autosta}.
Let $\mathcal{X}=\left\{x_0,\ldots,x_n\right\}$ be the set of values taken by $X$. We have
$$
x_0=\left(1-\frac{n}{\sqrt{n\left(n+A+B\right)}}\right)\frac{A}{A+B}>0
$$
and 
$$
x_n=\frac{n}{\sqrt{n\left(A+B+n\right)}} \left(1-\frac{A}{A+B}\right)+\frac{A}{A+B}<1.
$$
Hence, it holds that $\mathcal{X}\subset (0,1)$. 
 The coefficients $\pi_i$ from \eqref{eq_ai3} become
$$
\pi_i=\frac{(i + A) (-i + n) \left(B (B + n - \sqrt{n (A + B + n)}) + 
   A (B - n + \sqrt{n (A + B + n)})\right)}{\left(A (-i + n) + 
   B (-i + \sqrt{n (A + B + n)})\right) \left( Bi + 
   A (i - n + \sqrt{n (A + B + n)})\right)}.
$$
In the Appendix we show that  \begin{equation}\label{eq:claim01}
    0\leq \pi_i\leq 1, \quad i=0,\ldots,n
\end{equation}
so that $\E[|\pi(X)|+|1-\pi(X)|]=1$.
(We actually give two proofs of this fact, the first being based on direct computations and the other  based on Propositions \ref{prop borne inf} and \ref{prop borne sup} that doesn't require to compute the coefficient $\pi_i$.) 
A straightforward application of Theorem~\ref{theo2dform} and Corollary~\ref{cor:regsp} yields the claims. 
\end{proof}

\begin{remark}\label{rem:polyaeg}
As mentioned in the introduction, \cite{goldstein2013stein} have already studied the same problem (through a different version of Stein's method). They obtain (see their Theorem 1) 
\begin{equation*}
W_1\left(\frac{Y}{n},Z\right)\leq \frac{1}{nm}\left(\frac{m+\max(\alpha,\beta)}{2}+\frac{\alpha\beta}{\alpha+\beta}\right)(b_0+b_1)+\frac{3}{2n} =: \mathrm{GR}(n).
\end{equation*}
Since $\E[Y]=nA/(A+B)$, we have  \eqref{rema1}
\begin{equation*}
W_1\left(\frac{Y}{n},X\right) 
=\left(1-\frac{n}{\sqrt{n(n+A+B)}}\right)\frac{2A}{A+B}. 
\end{equation*}
Combining this last inequality with  \eqref{boundw1wabe} and the triangle inequality  we conclude 
\begin{equation*}
W_1\left(\frac{Y}{n},Z\right)\leq \frac{b_0+b_1+2}{2\sqrt{n\left(\frac{\alpha}{m}+\frac{\beta}{m}+n\right)}} +\left(1-\frac{n}{\sqrt{n(n+\frac{\alpha}{m}+\frac{\beta}{m})}}\right)\frac{2\alpha}{\alpha+\beta} =: \mathrm{GS}(n).
\end{equation*}
For the sake of illustration, fix $\alpha\ge \beta$. Direct computations lead to 
\begin{align*}
\lim_{n \to \infty} n \mathrm{GR}(n)& =  \frac32   + \frac{b_0+b_1}{2} +  \frac{\alpha}{m}\left(\frac{b_0+b_1}{2}\,\frac{ \alpha+3 \beta}{\alpha + \beta}\right) \\
    \lim_{n \to \infty} n \mathrm{GS}(n)& =1  +  \frac{b_0+b_1}{2}+ \frac{\alpha}{m}
\end{align*}
hence both bounds are of the same order. Since $b_1 \ge 2$ for all values of $\alpha$ and $\beta$, the latter is always less than the former. 
\end{remark}

\begin{proposition}\label{semici}
Let $Y$ be a random variable defined on $\left\{1,\ldots, n-1\right\}$ with density
\begin{equation*}
p_i=\frac{1}{\begin{pmatrix}n\\ 2\end{pmatrix}}\frac{\prod_{j=1}^{i-1}2j+1}{\prod_{j=1}^{i-1}2j}\frac{\prod_{j=1}^{n-i-1}2j+1}{\prod_{j=1}^{n-i-1}2j}.
\end{equation*}
and $Z\sim\mathrm{Beta}(3/2,3/2)$. 
Set
$$
X=\frac{1}{\sqrt{n^2-n-2}}\left(Y-\frac{n}{2}\right)+\frac{1}{2}.
$$
Then
\begin{equation*}
W_1(X,Z)\leq 5\frac{1}{\sqrt{n^2-n-2}}. 
\end{equation*}

\end{proposition}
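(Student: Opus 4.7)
The plan is to apply Theorem \ref{theo2dform} with $w = \tau_q(x) = x(1-x)/3$, since $\mathrm{Beta}(3/2,3/2)$ belongs to the integrated Pearson family with $\E[Z]=1/2$ and $\Var[Z]=1/16$. Verifying the standardization \eqref{condition1} amounts to checking that $\E[X]=1/2$ and $\Var[X]=1/16$, which in turn reduces to $\E[Y]=n/2$ and $\Var[Y]=(n^2-n-2)/16$, combined with Proposition \ref{w=tau2} and \eqref{eq:autosta}. The mean is immediate from the symmetry $p_i=p_{n-i}$, and the variance follows either by induction using the recurrence $p_i/p_{i-1} = (2i-1)(n-i)/((i-1)(2n-2i+1))$ or by rewriting the ratio-of-products in terms of central binomial coefficients. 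The inclusion $\mathcal{X}\subset(0,1)$ reduces, again by symmetry, to $x_1>0$, which is equivalent to $\sqrt{n^2-n-2}>n-2$, i.e.\ $n\geq 3$.

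The main obstacle is showing that the weights $\pi_i$ from \eqref{eq_ai3} satisfy $0\leq \pi_i\leq 1$ for every $i\in\{1,\ldots,n-1\}$. I would follow the strategy adopted in the proof of Proposition \ref{prop:ourgoldrein} and invoke Propositions \ref{prop borne sup} and \ref{prop borne inf}, which sidesteps computing $\pi_i$ in closed form. Both statements require analyzing the sign patterns of expressions such as $p_iw_i - \delta_{i+1}\sum_{j=0}^i s_jp_j$ and $p_iw_i/\delta_{i+1}-p_{i-1}w_{i-1}/\delta_i - s_ip_i$. The symmetries $p_i=p_{n-i}$ and $s(x_i)=-s(x_{n-i})$ restrict the verification to $i\leq n/2$, and the explicit ratio-of-products form of $p_i$ turns each sign check into a finite family of polynomial inequalities in $i$ and $n$. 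This is where the bulk of the work lies; the boundary conditions $w_0\leq \delta_1 s_0$ and $w_{\ell}\leq -\delta_\ell s_\ell$ required by the sufficient conditions should also be verified at this stage.

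Once $0\leq \pi_i\leq 1$ is in hand, the uniform mesh $\delta=1/\sqrt{n^2-n-2}$ and Theorem \ref{theo2dform} give $W_1(X,Z)\leq C_q(\tau_q)\,\delta$, since $\E[|\pi(X)|+|1-\pi(X)|]=1$. To bound the Stein factor I would combine Proposition \ref{prop borne3} (which yields $\sup_{h\in\mathrm{Lip}(1)}\|(f_h'\tau_q)'\|_\infty\leq 2$) with $\|\tau_q'\|_\infty=1/3$ and the first assertion of Lemma \ref{lem beta bound}, noting that since $\alpha=\beta=3/2>1$ the sharper inequality \eqref{eq lem beta} is unavailable. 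This gives $\sup_{h\in\mathrm{Lip}(1)}\|\tau_q' f_h'\|_\infty\leq b_0+b_1$ and hence $C_q(\tau_q)\leq 1+(b_0+b_1)/2$. Evaluating $b_0$ and $b_1$ from \eqref{eqn:b0}--\eqref{eqn:b1} at $\alpha=\beta=3/2$ should then yield $b_0+b_1\leq 8$, giving the claimed $C_q(\tau_q)\leq 5$; should this numerical step be too loose, one can fall back on Theorem \ref{theo main intro} itself to exploit the additional $\delta^2$ slack.
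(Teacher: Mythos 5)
Your high-level plan is sound and matches the paper in most respects: standardization via \eqref{eq:autosta} and Proposition~\ref{w=tau2}, checking $\mathcal{X}\subset(0,1)$ by symmetry, invoking Theorem~\ref{theo2dform} with regular mesh $\delta=1/\sqrt{n^2-n-2}$, noting that the sharper bound \eqref{eq lem beta} is inapplicable because $3/2>1$, and evaluating $b_0+b_1=8$ from \eqref{eqn:b0}--\eqref{eqn:b1} (indeed $b_0=4\cdot\frac12=2$ and $b_1=4(1+\frac12)=6$), giving $C_q(\tau_q)\le 5$. All of this is correct.

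The genuine gap is the central step, which you defer: you propose to establish $0\le\pi_i\le 1$ by the sign-pattern criteria of Propositions~\ref{prop borne sup} and \ref{prop borne inf}, remarking that this ``sidesteps computing $\pi_i$ in closed form,'' and you never actually carry the verification through. But for this particular mass function the weights \emph{do} admit a simple closed form, and the paper uses it:
\begin{equation*}
\pi_i=-\frac{(2i+1)(n-i-1)}{4i^2-4in+n+2},\qquad i=1,\ldots,n-1.
\end{equation*}
From this the verification is a two-line algebra exercise: the numerator $-(2i+1)(n-i-1)$ is $\le 0$ on the range, while the quadratic $4i^2-4in+n+2$ has roots $(n\mp\sqrt{n^2-n-2})/2$, so it is negative on the entire interval $[1,n-1]$, giving $\pi_i\ge 0$; and $\pi_i\le 1$ rearranges to $(i-1)(2i-2n-1)\le 0$, which holds trivially for $1\le i\le n-1$. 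Your indirect route via Propositions~\ref{prop borne sup} and \ref{prop borne inf} would require verifying boundary conditions and sign-change patterns of expressions involving ratios of the $p_i$ --- considerably more work, with no guarantee that you end up with a proof unless you actually perform the checks. You should compute $\pi_i$ explicitly from \eqref{eq_ai3} (the ratio $p_{i}/p_{i-1}=(2i-1)(n-i)/((i-1)(2n-2i+1))$ you already wrote down makes this tractable) and finish directly; as written, the proof is incomplete at its critical point.
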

\begin{proof}
The standardization of $X$ follows from \eqref{eq:autosta}. Let $\mathcal{X}=\left\{x_1,\ldots,x_{n-1}\right\}$ be the set of values taken by $X$. Simple computations show that
$x_0>0$ and $x_{n-1}<1$ if and only if $n\geq 2$. Hence, it holds that $\mathcal{X}\subset ]0,1[$. 
The coefficients $\pi_i$ from \eqref{eq_ai3} become
$$
\pi_i=-\frac{(2 i + 1) (n - i - 1)}{4 i^2 - 4 in + n + 2}.
$$
 We show in the appendix  that $\pi_i\in [0,1]$ for all $i=1,\ldots,n-1$.  
Since  $b_0+b_1=8$, we get the result by Theorem \ref{theo2dform}. 
\end{proof}

\begin{remark}
 Let $Z\sim\mathrm{Beta}(3/2,3/2)$. In \cite{fulman2014stein}, it is shown that
$W_1\left({Y}/{n},Z\right)\leq {59}{/(2n)}.$
Our bound, combined with  \eqref{rema1}, gives 
\begin{equation*}
W_1\left(\frac{Y}{n},Z\right)\leq 5\frac{1}{\sqrt{n^2-n-2}} +\left(\frac{n}{\sqrt{n^2-n-2}}-1\right). 
\end{equation*}
which  is of the same order but with a constant equal to  11/2.
\end{remark}

Now, we look at another example described in \cite{fulman2023beta}, concerning a Beta approximation for the stationary distribution in the two alleles Moran model.
\begin{proposition}
\label{prop:moran}
Let $Y$ be a discrete variable defined on $\left\{0,1,\ldots,2n-1, 2n\right\}$ with mass function given by
\begin{equation*}
 p_i=p_0\frac{(2 n)! \Gamma(i + A)\Gamma(B - i)}{i! (2 n - i)! \Gamma(A) \Gamma(B)},
\end{equation*}
where  $A=2na/(2n-a-b)$, $B=2n(2n-a)/(2n-a-b)$ for some $a, b>0$ and $p_0$ ensures the normalisation, and let $Z\sim\mathrm{Beta}(a,b)$. 
Set
\begin{equation*}
    \delta_M = 
     \sqrt{\frac{1}{4 n^2} - \frac{a + b}{8 n^3 (a + b + 1)}}
\end{equation*}
and consider 
\begin{equation*}
X=\delta_M\left(Y - \frac{2 n a}{a + b}\right)  + 
 \frac{a}{a + b}.
\end{equation*}
For every $a,b>1$, there exists $n_0\in\N$ such that for all $n\geq n_0$ 
\begin{equation}
\label{eqmoo}W_1(X,Z)\leq \left(1+\frac{b_0+b_1}{2}\right) \delta_M  
\end{equation}
\end{proposition}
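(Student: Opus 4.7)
The plan is to follow the same template as the proof of Proposition \ref{prop:ourgoldrein}, applying Theorem \ref{theo2dform} with the Stein kernel weight $w=\tau_q(x)=x(1-x)/(a+b)$ and exploiting the fact that the support of $X$ is equispaced with mesh exactly $\delta_M$, so that once $0\le\pi_i\le 1$ is established the expectation appearing in Theorem \ref{theo2dform} collapses to $\delta_M$.

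First, I would take care of the three preliminary checks. (i) Verify that $\mathcal X:=\{\delta_M(i-2na/(a+b))+a/(a+b):\,i=0,\ldots,2n\}\subset(0,1)$: this is a direct computation on the two endpoints $i=0$ and $i=2n$, noting that $\delta_M\cdot 2n\sim 1$ as $n\to\infty$ and that the lower order correction in $\delta_M$ produces a strict inclusion for $n$ large enough. (ii) Verify the standardisation \eqref{condition1} with $w=\tau_q$, which by Proposition \ref{w=tau2} reduces to $\E[X]=a/(a+b)$ and $\Var[X]=ab/((a+b)^2(a+b+1))$; the definition of $X$ via the affine map $X=\delta_M(Y-\E[Y])+a/(a+b)$ makes the mean identity automatic, and the exact value of $\delta_M$ has been chosen so that $\delta_M^2\Var[Y]=ab/((a+b)^2(a+b+1))$, where $\Var[Y]$ is computed from the Moran mass function (this is the familiar Beta--Binomial/P\'olya formula). (iii) Observe that $\tau_q/w=1$ is bounded and $s(x)=-(x-a/(a+b))$ satisfies all the hypotheses of Theorem \ref{theo main intro}.

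The main obstacle, exactly as in the P\'olya--Eggenberger example, is to prove $0\le\pi_i\le 1$ for all $i\in\{0,\ldots,2n\}$. I would approach this in two ways. The direct route is to plug the mass function of $Y$ and $w=\tau_q$ into the closed form \eqref{eq_ai3}, which (after the telescoping present in the Moran case, where the ratio $p_{i+1}/p_i$ is a simple rational function of $i$) produces an explicit rational expression for $\pi_i$ in $i, n, a, b$, whose sign and comparison with $1$ can be read off. The cleaner, and presumably intended, route is to apply Propositions \ref{prop borne sup} and \ref{prop borne inf}: both sufficient conditions require the quantities $p_iw_i/\delta_{i+1}-p_{i-1}w_{i-1}/\delta_i-s_ip_i$ and $p_iw_i-\delta_i\sum_{j<i}s_jp_j$ to change sign at most a controlled number of times, and for the Moran mass function these quantities are, after simplification by $p_i$, rational in $i$ of bounded degree; the boundary checks $w_0\le\delta_1 s_0$ and $w_{\ell}\le -\delta_\ell s_\ell$ become strict inequalities up to $O(1/n)$ corrections, which is precisely what forces the threshold $n_0=n_0(a,b)$ in the statement and uses the hypothesis $a,b>1$ (so that $(b_0+b_1)/2$ is finite and the relevant leading-order terms have the correct sign).

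Once $0\le\pi_i\le 1$ is secured, the conclusion is immediate: since the mesh is constant, $\delta_i^+=\delta_i^-=\delta_M$ on the interior (and the boundary convention $\pi_0=1$, $\pi_{2n}=0$ kills the missing neighbours), hence
\begin{equation*}
\E\bigl[|\pi(X)|\delta^+(X)+|1-\pi(X)|\delta^-(X)\bigr]=\delta_M\,\E\bigl[\pi(X)+(1-\pi(X))\bigr]=\delta_M.
\end{equation*}
Plugging this into Theorem \ref{theo2dform} and using the bound $C_q(\tau_q)\le 1+(b_0+b_1)/2$ from Lemma \ref{lem beta bound} (valid in the regime $a,b>1$) yields \eqref{eqmoo}.
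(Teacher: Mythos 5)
Your overall template — standardise via \eqref{eq:autosta}, show $0\le\pi_i\le 1$, and then apply Theorem \ref{theo2dform} with $C_q(\tau_q)\le 1+(b_0+b_1)/2$ — is the right one and matches the paper's route. But two things in your plan are off, and the third is where essentially all the work lives.

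First, the ``direct route'' you describe does not work. The paper states explicitly in the proof that no closed form for $\pi_i$ was found in the Moran case; unlike the P\'olya--Eggenberger case, the ratio $p_{i+1}/p_i$ being rational does not make the partial sums in \eqref{eq_ai3} telescope into anything clean. So this branch of your plan should be dropped rather than presented as an option.

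Second, your explanation of where $a,b>1$ enters is imprecise. It has nothing to do with $b_0,b_1$ being finite (they are finite for all $\alpha,\beta>0$), nor is it really forced by the boundary inequalities $w_0\le\delta_1 s_0$ and $w_\ell\le-\delta_\ell s_\ell$, which the paper shows hold asymptotically for any $a,b>0$. It comes from the sign of certain cubic quantities appearing in the hypotheses of Propositions \ref{prop borne inf} and \ref{prop borne sup}: after simplifying $p_{i+1}w_{i+1}/p_i-w_i-\delta s_i$ (resp.\ $w_i-w_{i-1}p_{i-1}/p_i-\delta s_i$) one obtains a degree-3 polynomial $g$ in $i$ with $g'''\ge 0$, and the paper checks that $g(0)\ge 0$ asymptotically exactly when $a>1$ (resp.\ $g(2n)\le 0$ exactly when $b>1$). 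Those two sign dichotomies are the origin of the hypothesis $a,b>1$, and they are why the paper flags the restriction as an artefact of this proof and conjectures \eqref{eqmoo} to hold in general.

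Third, the substance of the paper's proof is the asymptotic bookkeeping that establishes the boundary inequalities and the sign patterns of these cubics for $n\ge n_0(a,b)$, and this is entirely absent from your write-up. Saying the quantities are ``rational in $i$ of bounded degree'' is correct in spirit, but you need to actually compute $g$, observe $g'''\ge 0$ (hence $g$ is convex, concave, or concave-then-convex, so it changes sign at most twice), extract the leading term of $g(0)$ and $g(2n)$ in $n$, and match the resulting sign patterns against case (3) of Propositions \ref{prop borne inf}/\ref{prop borne sup}. Without these computations the proposal is a skeleton, not a proof.
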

\begin{remark}
    We stress that the restriction $a, b>1$  in \eqref{eqmoo} is purely an artefact of our proof: we need to use the fact that  weights  $\pi_i$ (which are not explicit in this case) are between 0 and 1, and though we observe this numerically for all parameter values, we were only able to prove it  when $a, b>1$.   Hence we conjecture that \eqref{eqmoo} holds for all parameter values. 
\end{remark}
\begin{remark}
    In \cite{fulman2023beta} the version of   Stein's method  developed in \cite{dobler2015stein} is applied to the same problem and yields  $d_2(Y/(2n), Z) \le C(a, b)/n$  for all positive $a, b$,  where $d_2(\cdot, \cdot)$ is a smooth Wasserstein metric (i.e.\ the supremum in \eqref{eqwa} is taken over Lipschitz functions with bounded second derivative) and $C(a, b)$ is an explicit constant. Combining  \eqref{rema1} with \eqref{eqmoo} we obtain 
    \begin{equation*}
         W_1\left( \frac{Y}{2n}, Z\right) \le \frac{2a}{a+b} \left(1- 2 n \delta_M\right) + \left( 1 + \frac{b_0 + b_1}{2}\right) \delta_M.
    \end{equation*}
    This bound is of the form  $\widetilde{C}(a, b)/n$ with $\widetilde{C}(a, b)$ an explicit constant; obviously this  also provides a  bound on  the smooth Wasserstein distance $d_2(Y/(2n), Z)$. We note  however that   the  constant $C(a, b)$ from \cite{fulman2023beta} is not competitive with respect to ours (e.g.\ when $a = 2$ and $b=3$ then 
    $C(a, b) = 650$ whereas $\widetilde{C}(a, b) = 9$).  
\end{remark}

\subsection{Normal approximation} \label{sn:normalapprox}
Throughout this subsection, we let $Z \sim \varphi$ with $\varphi$  the standard Gaussian density. The normal distribution belongs to the Integrated Pearson family, with Stein kernel  $\tau_\varphi(x)  = 1$.  Clearly  $$C_\varphi^0(1) = 1 \mbox{ and } C_\varphi^1(1) = 0.$$
Now let $X \sim p$ be a discrete random variable.  In order to satisfy \eqref{condition1} for $w = \tau_q$   it is necessary that  $\mathbb E[X] = 0$ and $\mathrm{Var}[X] = 1$. 

We begin by studying the behavior of the sequence of weights $\pi$ for sums of independent variables. Perhaps unsurprisingly, we obtain  a  result  reminiscent of equation (14) in \cite{courtade2019existence} or \cite{nourdin2014integration} (dealing  with Stein kernels of sums of independent variables), as follows. 
\begin{lemma}
\label{lem a}
Let  $Y_{j}, j = 1, \ldots, n$ be  independent integer valued random variables. Let $\E [Y_{j}] = \mu_{j}$ and $\mathrm{Var}[Y_{j}] = \sigma^2_{j}$ and  set $X_{j} = (Y_{j} - \mu_{j})/\sigma_{j}$ for $j = 1, \ldots, n$.  Define  $X= \sum_{j=1}^n (Y_{j}- \mu_{j}) / s$ with $s^2 = \sum_{j=1}^n \sigma_{j}^2$.   
For each $j = 1, \ldots, n$ let  $\pi_{j}$ be the weights defined by \eqref{eq_ai3gau} for $X_{j}$. 
Then 
\begin{equation}\label{eq:pisum}
\pi(X):=\sum_{j=1}^n\frac{\sigma_{j}^2}{s^2}\E\left[\pi_{j}(X_{j})|X\right]
\end{equation}
satisfies \eqref{eq stein a} for $X$ and $Z$. 
\end{lemma}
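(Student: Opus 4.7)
The plan is to derive the Stein identity $\E[\Delta^{\pi(X)} f(X)] = \E[X f(X)]$ for $X$ (which is \eqref{eq stein a} specialised to $w\equiv 1$, $s(x)=-x$, the setting appropriate for the standard Gaussian target) by reducing it, summand by summand, to the analogous identity that each $X_j$ satisfies by hypothesis. Throughout, $f$ denotes an arbitrary bounded function on the lattice of $X$.

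First I would exploit the independence structure: writing $X = (\sigma_j/s) X_j + R_j$ with $R_j = s^{-1}\sum_{k\ne j}(Y_k - \mu_k)$, the remainder $R_j$ is independent of $X_j$, and the linearity $X = s^{-1}\sum_{j}\sigma_j X_j$ immediately yields
\begin{equation*}
\E[X f(X)] = \sum_{j=1}^n \frac{\sigma_j}{s}\, \E[X_j f(X)].
\end{equation*}

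Next, for each fixed $j$, I would condition on $R_j$ and apply the Stein identity satisfied by $X_j$ to the bounded test function $g_j(x) = f\bigl((\sigma_j/s) x + R_j\bigr)$, which is well-defined on the lattice of $X_j$. The key book-keeping is that, since $Y_j$ is integer-valued, the mesh of $X_j$ is $1/\sigma_j$ while the mesh of $X$ is $1/s$; a forward (resp.\ backward) unit step of $X_j$ therefore corresponds exactly to a forward (resp.\ backward) unit step of $X$. This yields $\Delta^{\pi_j} g_j(X_j) = (\sigma_j/s)\bigl(\pi_j(X_j)\Delta^+ f(X) + (1-\pi_j(X_j))\Delta^- f(X)\bigr)$, hence
\begin{equation*}
\E[X_j f(X)\mid R_j] = \frac{\sigma_j}{s}\,\E\!\left[\pi_j(X_j)\Delta^+ f(X) + (1-\pi_j(X_j))\Delta^- f(X) \,\Big|\, R_j\right].
\end{equation*}

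Finally, I would take unconditional expectations, substitute into the decomposition of the first step, and invoke the tower property in the form $\E[\pi_j(X_j)\Delta^\pm f(X)] = \E[\E[\pi_j(X_j)\mid X]\,\Delta^\pm f(X)]$ (valid because $\Delta^\pm f(X)$ is $\sigma(X)$-measurable). Using $\sum_j \sigma_j^2/s^2 = 1$ to rewrite the coefficient of $\Delta^- f(X)$, the sum collapses to $\E[\pi(X)\Delta^+ f(X) + (1-\pi(X))\Delta^- f(X)] = \E[\Delta^{\pi(X)} f(X)]$, with $\pi(X)$ precisely as in \eqref{eq:pisum}. The main obstacle I anticipate is the bookkeeping in the middle step: carefully identifying the forward/backward lattice steps of $X_j$ with those of $X$. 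This relies crucially on the integer-valuedness of the $Y_j$'s, so that the meshes are uniformly $1/\sigma_j$ and $1/s$ respectively, and consecutive values of $X_j$ (at $R_j$ fixed) map to consecutive values of $X$. Once this translation is in place, the remainder is a clean chain of linear manipulations and tower-property computations.
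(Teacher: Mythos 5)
Your proposal is correct and follows essentially the same route as the paper: your $R_j$ is exactly the paper's $\hat X_j$, and the three steps (linearity of the decomposition of $\E[Xf(X)]$, conditional application of the Stein identity for $X_j$ with the scaled test function $g_j$, then the lattice-step bookkeeping followed by the tower property and $\sum_j\sigma_j^2/s^2=1$) reproduce the paper's argument. The only stylistic difference is that the paper writes out the mesh explicitly in the notation ($\Delta^\pm_{1/\sigma_j}$, $\Delta^\pm_{1/s}$), where you instead justify the mesh correspondence verbally.
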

\begin{proof}
 For the sake of clarity, in this proof we will use the notation $\Delta_{\delta}^+ f(x)=(f(x+\delta)-f(x))/\delta$ and similarly for $\Delta_{\delta}^-$ and $\Delta^{\pi}_{\delta}$. By construction,  \eqref{eq stein a} for each $X_{j}$ reads 
$\mathbb{E}[\Delta^{\pi_{j}}_{1/\sigma_{j}} g(X_{j})] = \E [X_{j}g(X_{j})]$
 for all bounded function $g$. Set $\hat{X}_{i}=\sum_{j=1,j\neq i}^{n} \frac{\sigma_{j}}{s}X_{j}$. Then for $f$ bounded we have
\begin{align*}
\E\left[Xf(X) \right]
&=\sum_{j=1}^n\frac{\sigma_{j}}{s}\E\left[ X_{j} f(X)\right]\\
&=\sum_{j=1}^n\frac{\sigma_{j}}{s}\E\left[\E\left[X_{j} f\left(\frac{\sigma_{j}}{s}X_{j}+\hat{X}_{j}\right) |\hat{X}_{j}\right]\right].
\end{align*}
Fix $j \in \left\{1, \ldots, n\right\}$.  By independence we can use equation \eqref{eq stein a} applied to $X_{j}$ with the function $g_j(x) = f(\sigma_{j}/s x + \hat X_{j})$ to get 
\begin{align*}
\E\left[Xf(X)\right]&
= \sum_{j=1}^n\frac{\sigma_{j}}{s} \E\left[\E \left[   \Delta^{\pi_{j}}_{1/\sigma_{j}} g_j(X_{j})| \hat{X}_{j}\right]  \right].
\end{align*}
Now, remark that 
\begin{align*}
\Delta^{\pi_{j}}_{1/\sigma_{j}} g_j(X_{j})  & = \pi_{j}(i)  \frac{f \left(\frac{\sigma_{j}}{s}(X_{j} + \frac{1}{\sigma_{j}})+\hat{X}_{j}\right)-f \left(\frac{\sigma_{j}}{s}X_{j}+\hat{X}_{j}\right) }{1/\sigma_{j}} \\
& \quad + (1-\pi_{j}(X_{j})) \frac{f \left(\frac{\sigma_{j}}{s} X_{j} +\hat{X}_{j}  \right)-f \left(\frac{\sigma_{j}}{s}(X_{j}-\frac{1}{\sigma_{j}}) +\hat{X}_{j}\right)}{1/\sigma_{j}}\\
&= \frac{\sigma_{j}}{s}\pi_{j}(X_{j})  \frac{f \left(X+\frac{1}{s}\right)-f \left(X\right) }{1/s}\\
&\quad + \frac{\sigma_{j}}{s}(1-\pi_{j}(X_{j})) \frac{f \left(X \right)-f \left(X-\frac{1}{s} \right)}{1/s}\\
&=\frac{\sigma_{j}}{s}\left[\pi_{j}(X_{j})\Delta^{+}_{{1/s}} f\left(X\right) 
+\left(1-\pi_{j}(X_{j})\right)\Delta^{-}_{{1/s}} f\left(X\right)\right].
\end{align*}
Hence, we have
\begin{align*}
\E\left[Xf(X)\right] &=\sum_{j=1}^n\frac{\sigma_{j}^2}{s^2} \E\left[\E \left[\pi_{j}(X_{j})\Delta^{+}_{{1/s}} f\left(X\right) 
+\left(1-\pi_{j}(X_{j})\right)\Delta^{-}_{{1/s}} f\left(X\right)| \hat{X}_{j}\right]  \right]\\
&=\sum_{j=1}^n\frac{\sigma_{j}^2}{s^2} \E\left[\pi_{j}(X_{j})\Delta^{+}_{{1/s}} f\left(X\right) 
+\left(1-\pi_{j}(X_{j})\right)\Delta^{-}_{{1/s}} f\left(X\right)\right].
\end{align*}
By conditioning on $X$ and noting that $s^2=\sum_{j=1}^n \sigma_{j}^2$, we obtain
\begin{align*}
\E\left[X f(X)\right]
&= \E\left[\sum_{j=1}^n\frac{\sigma_{j}^2}{s^2}\E[\pi_{j}(X_{j})|X]\Delta^{+}_{{1/s}} f\left(X\right)
+\left(1-\sum_{j=1}^n\frac{\sigma_{j}^2}{s^2}\E[\pi_{j}(X_{j})|X]\right)\Delta^{-}_{{1/s}} f\left(X\right)\right]
\end{align*}
which gives the claim. 
\end{proof}

A decomposition  such as \eqref{eq:pisum} is of course very handy for producing applications of Theorem \ref{theo main intro} to normal approximations of sums of independent discrete random variables. In particular, the following holds. 

\begin{theorem}
\label{theo TCL}
Under the  notations from Lemma \ref{lem a}, we have 
\begin{equation}
W_1(X,Z)\leq \frac{1}{s^3}\sum_{j=1}^n \sigma_{j}^2\E\left[|\pi_{j}(X_{j})|+\left|1-\pi_{j}(X_{j})\right|\right].
\end{equation}
\end{theorem}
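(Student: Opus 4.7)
The plan is to apply Corollary \ref{cor:regsp} to the random variable $X = \sum_{j=1}^n (Y_j - \mu_j)/s$ with target $Z \sim \varphi$ and weight $w \equiv 1 = \tau_\varphi$. Since the $Y_j$ are integer valued and independent, the support of $X$ is a subset of the lattice $\{(k - \sum_j \mu_j)/s \, : \, k \in \mathbb Z\}$, so the mesh is constant and equal to $\mesh = 1/s$. The standardization is automatic: $\mathbb E[X] = 0 = \mathbb E[Z]$ and $\mathrm{Var}[X] = 1 = \mathrm{Var}[Z]$, so condition \eqref{condition1-intro} holds. Moreover we recall from Section \ref{sn:normalapprox} that $C^0_\varphi(1) = 1$ and $C^1_\varphi(1) = 0$.

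First, I would invoke Lemma \ref{lem a} which identifies the bespoke weight $\pi$ associated to $X$ (with $w=1$) as
\begin{equation*}
\pi(X) = \sum_{j=1}^n \frac{\sigma_j^2}{s^2} \, \E[\pi_j(X_j) \mid X].
\end{equation*}
Since $\sum_{j=1}^n \sigma_j^2/s^2 = 1$, the same formula gives
\begin{equation*}
1 - \pi(X) = \sum_{j=1}^n \frac{\sigma_j^2}{s^2} \, \E[1 - \pi_j(X_j) \mid X].
\end{equation*}
Applying the triangle inequality inside the conditional expectation, then Jensen's inequality, and then taking expectations yields
\begin{equation*}
\E\!\left[|\pi(X)| + |1-\pi(X)|\right] \leq \sum_{j=1}^n \frac{\sigma_j^2}{s^2} \, \E\!\left[|\pi_j(X_j)| + |1-\pi_j(X_j)|\right].
\end{equation*}

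With these two ingredients in hand, Corollary \ref{cor:regsp} delivers
\begin{equation*}
W_1(X, Z) \leq C^0_\varphi(1) \cdot \frac{1}{s} \cdot \E\!\left[|\pi(X)| + |1-\pi(X)|\right] \leq \frac{1}{s} \sum_{j=1}^n \frac{\sigma_j^2}{s^2} \, \E\!\left[|\pi_j(X_j)| + |1-\pi_j(X_j)|\right],
\end{equation*}
which is the desired bound after rearrangement.

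The only subtlety worth checking is that the assumptions needed to apply Theorem \ref{theo main intro} hold: one must verify $w = 1 \in L^1(p)$ (trivial), that $\pi w \in L^1(p)$ (which follows from the finite-moment bound on the right-hand side provided $\E[|\pi_j(X_j)|] < \infty$ for each $j$, and otherwise the theorem becomes vacuous), and the Stein-kernel bound $\tau_\varphi/w = 1$ bounded (immediate). The real work is already packaged in Lemma \ref{lem a}; the present theorem is essentially a clean corollary of that decomposition together with the $w = 1$ specialization of Corollary \ref{cor:regsp}, so I do not anticipate any genuine obstacle.
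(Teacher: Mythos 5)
Your proof is correct and follows essentially the same route as the paper's: identify the bespoke weight of the sum via Lemma \ref{lem a}, bound $\E[|\pi(X)|+|1-\pi(X)|]$ by a triangle-inequality/Jensen argument, note the degenerate case $\E[|\pi(X)|]=\infty$, and then invoke the main Wasserstein bound with $w\equiv 1$, $C_\varphi^0(1)=1$, $C_\varphi^1(1)=0$ and mesh $1/s$. The only cosmetic difference is that you cite Corollary \ref{cor:regsp} whereas the paper applies Theorem \ref{theo main intro} directly; these coincide on a regular lattice.
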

\begin{proof}
Using Lemma \ref{lem a}, we obtain
\begin{align*}
\E[|\pi(\s)|+|1-\pi(\s)|]
&=\E\left[\left|\sum_{j=1}^n\frac{\sigma_{j}^2}{s^2}\E\left[\pi_{j}(X_{j})|\s\right]\right|+\left|\sum_{j=1}^n\frac{\sigma_{j}^2}{s^2}\E\left[1-\pi_{j}(X_{j})|\s\right]\right|\right]\\
&\leq\sum_{j=1}^n\frac{\sigma_{j}^2}{s^2}\E\left[\E\left[|\pi_{j}(X_{j})||\s\right]+\E\left[\left|1-\pi_{j}(X_{j})\right||\s\right]\right]\\
&=\sum_{j=1}^n\frac{\sigma_{j}^2}{s^2}\E\left[|\pi_{j}(X_{j})|+\left|1-\pi_{j}(X_{j})\right|\right]. 
\end{align*}
If $\E[|\pi(X)|]=\infty$, the statement is trivial. Otherwise, we have that $\pi$, which solves equation \eqref{eq stein a} by Lemma \ref{lem a}, is also solution of equation \eqref{eq_deltas}. A direct application of Theorem \ref{theo main intro} yields the claim.
\end{proof}

\begin{corollary}
\label{coro TCL}
If, moreover,  the $(Y_{j})_{j =1, \ldots, n}$ are identically distributed with mean $\mu$ and variance $\sigma^2$ then 
\begin{equation}
\label{eq coro TCL}
W_1(X,Z)\leq \frac{1}{\sigma\sqrt{n}}\E\left[|\pi_{1}(X_{1})|+\left|1-\pi_{1}(X_{1})\right|\right].
\end{equation}
\end{corollary}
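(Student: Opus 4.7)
The plan is to simply specialize Theorem \ref{theo TCL} to the i.i.d.\ setting and carry out the elementary simplifications.

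First I would observe that under the i.i.d.\ hypothesis, the individual variances coincide: $\sigma_j^2 = \sigma^2$ for all $j = 1, \ldots, n$, so that
\begin{equation*}
s^2 = \sum_{j=1}^n \sigma_j^2 = n \sigma^2, \qquad s^3 = n^{3/2} \sigma^3.
\end{equation*}
Moreover, since the $Y_j$ are identically distributed, so are the standardized variables $X_j = (Y_j - \mu)/\sigma$, and consequently the weight sequences $\pi_j$ defined through \eqref{eq_ai3gau} all coincide as functions (they depend only on the common law of $X_j$). In particular the random variables $\pi_j(X_j)$ are identically distributed, so
\begin{equation*}
\E\left[|\pi_j(X_j)| + |1 - \pi_j(X_j)|\right] = \E\left[|\pi_1(X_1)| + |1 - \pi_1(X_1)|\right]
\end{equation*}
for every $j$.

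Next I would plug these identities into the bound from Theorem \ref{theo TCL}:
\begin{equation*}
W_1(X, Z) \leq \frac{1}{s^3} \sum_{j=1}^n \sigma_j^2 \E\left[|\pi_j(X_j)| + |1 - \pi_j(X_j)|\right] = \frac{n \sigma^2}{n^{3/2} \sigma^3} \E\left[|\pi_1(X_1)| + |1 - \pi_1(X_1)|\right],
\end{equation*}
which simplifies immediately to the claimed $\frac{1}{\sigma \sqrt{n}} \E[|\pi_1(X_1)| + |1 - \pi_1(X_1)|]$.

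There is no genuine obstacle here: the corollary is a one-line consequence of Theorem \ref{theo TCL} once one notices that identical distribution forces equality of the summands and that $s^3/(n \sigma^2) = \sigma \sqrt n$.
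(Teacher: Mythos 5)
Your proof is correct and is exactly the intended argument: the paper states Corollary \ref{coro TCL} without proof precisely because it is the one-line specialization of Theorem \ref{theo TCL} you carried out, substituting $\sigma_j^2 = \sigma^2$, $s^2 = n\sigma^2$, and using that the $\pi_j(X_j)$ are identically distributed.
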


If $0\leq \pi_{1}(X_{1})\leq 1$  then $W_1(X,Z)\leq 1/(\sigma \sqrt{n})$. Hence also in the case of normal approximation of the binomial; this  is of course a very classical application, see e.g.\ \cite{feller1945normal} for an overview already back in 1945.
We can directly compare our Theorem~\ref{theo TCL} with specific instantiations of Proposition 2.2 in \cite{goldstein20071} which state that 
if  the summands  are identically distributed then 
\begin{equation}
\label{eq Goldstein}
W_1(X,Z)\leq c \frac{\E[|Y_{1}-\mu|^3]}{\sigma^3\sqrt{n}}
\end{equation}
where $c\leq 3$.
\begin{example} Clearly the bounds on $W_1(X,Z)$ given  by Corollary \ref{coro TCL} and equation \eqref{eq Goldstein} are of the same order; all that remains is to compare the constants.  We use the weights provided in Example \ref{ex:xomepisnorm}. We denote $\Delta_{\mathrm{GS}}$ the  constant appearing in  \eqref{eq coro TCL}, and $\Delta_{\mathrm G}$ that appearing in  \eqref{eq Goldstein}. The following then holds. 
\begin{enumerate}
    \item If {$Y_{1}, \ldots, Y_{n}$} are iid  Bernoulli variables with parameter $t\in(0, 1)$, 
one  gets
\linebreak
$
\Delta_{\mathrm{GS}} = {1}/{\sqrt{t(1-t)}}$ (this follows immediately because  $\pi_1 \in (0,1)$) and $\Delta_{\mathrm G} = ({t^2+(1-t)^2})/{\sqrt{t(1-t)}}$, 
 so our bound is never an improvement. 
\item If $Y_{1}, \ldots, Y_{n}$ are iid  Poisson variable with parameter $\lambda>0$, 
then we get 
$
\Delta_{\mathrm{GS}} = {1}/{\sqrt{\lambda}}{=\E[(X-\lambda)^3]/\lambda^{3/2}}$ (again,  $\pi_1 \in (0,1)$) and $\Delta_{\mathrm G} = c {\E[|X_1-\lambda|^3]}/{\lambda^{3/2}}
$
; taking $c=3$ the latter, for which we have not found a simple expression,  is larger than the former for  {all} values of $\lambda$. 
\item If $Y_{1}, \ldots, Y_{n}$ are iid uniform variable on $\left\{0,\ldots,2\ell\right\}$ for $\ell\in\N$,
 then   $\Delta_{\mathrm{GS}} = \sqrt3 (4+\ell+\ell^2)/(\sqrt{\ell+\ell^2}(2+4\ell))$ whereas 
$  \Delta_{\mathrm G} =  c 3 \sqrt 3{\sqrt{\ell+\ell^2)}}/{(2+4\ell)}$.
 For large $\ell$, it holds that $\Delta_{\mathrm{GS}}/\Delta_{\mathrm G} \approx 1/(3c)$. 

\item If $Y_{1}, \ldots, Y_{n}$ are iid Negative Binomial variables with parameters $\ell$ and $t$,  $\Delta_{\mathrm{GS}} = (2-t)/(\sqrt{\ell(1-t)})$. The constant $  \Delta_{\mathrm G}$ is difficult to compute in this case.
\end{enumerate}
\end{example}

We now tackle examples which do not follow directly from a CLT. 
\begin{example}\label{exa:hyperggg}
Let $Y$ be a hypergeometric  variable with parameters $r, n, N$, $r\geq n$ and $N > n+r$. Set $X = (Y-\mu)/\sigma$ where $\mu$ and $\sigma$ are defined as in \eqref{hypergeomai}.  
Using the $\pi_i$ given in \eqref{hypergeomai} we get  from Corollary \ref{cor:regsp}  
$$
W_1(X,Z)\leq \sqrt{\frac{N^2(N-1)}{nr(N-n)(N-r)}}.
$$
Returning in the original standardization we get, from \eqref{rema1} 
$$
\frac{nr}{N} \le W_1(Y,Z)\leq \frac{nr}{N} + \sqrt{\frac{N^2(N-1)}{nr(N-n)(N-r)}}.
$$
which is of precisely the correct order. This problem is classical,  see e.g.\ \cite{nicholson1956normal}, though we have not found competitor to ours in the literature.

\end{example}

Now, we look at an application to non integer valued random variables, with a non regularly spaced support; this example is inspired from \cite{mckeague2019stein,mckeague2021stein}, and \cite{chen2023optimal}.
\begin{proposition}
Take $n\in\N$ and let $\mathcal{X}=\left\{x_i,\, i=1,\ldots,n\right\}$ be the unique set of size $n$ such that $x_i< x_{i+1}$, 
$$
x_{i+1}-x_i= -\frac{1}{\sum_{j=1}^i x_j}, \quad i=1, \ldots, n-1
$$
and $\sum_{i=1}^n x_i=0$.
Let $X$ be a random variable uniformly distributed on $\mathcal X$. Then  
$$
W_1\left(\sqrt{\frac{n}{n-1}}X,Z\right)
\leq 4\left(\frac{n}{n-1}\right)^{3/2}\frac{\sqrt{\ln(n)}}{n}
$$
for all $n >100$.
\end{proposition}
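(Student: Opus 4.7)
\medskip

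The plan is to apply Theorem~\ref{theo main intro} with target $Z\sim\varphi$ (standard normal) and weight $w=\tau_\varphi\equiv 1$, so that $s(x)=-x$, $C^0_\varphi(1)=1$ and $C^1_\varphi(1)=0$. I would first verify the standing assumptions for the standardised variable $\tilde X := \sqrt{n/(n-1)}\,X$. Writing $S_i=\sum_{j=1}^i x_j$ and $Q_i=\sum_{j=1}^i x_j^2$, the recurrence $x_{i+1}-x_i=-1/S_i$ yields the elementary one-step identity $S_{i+1}x_{i+1}-S_ix_i = x_{i+1}^2-1$. Summing from $i=0$ to $n-1$ and using $S_0=S_n=0$ produces $Q_n=n-1$, so $\Var[X]=(n-1)/n$ and condition~\eqref{condition1-intro} holds for $\tilde X$.

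Next I would use the \emph{partial} version of the same identity, $S_ix_i-Q_i=1-i$ (proved by iterating the one-step relation), to evaluate the weights given by~\eqref{eq_ai3-intro}. With $p_i=1/n$ and $w_i=1$ a short calculation collapses the defining sum to
\[
\pi_i \;=\; i + \frac{n}{n-1}\bigl(S_ix_i-Q_i\bigr) \;=\; \frac{n-i}{n-1}\in[0,1].
\]
The key observation is that $\pi_i+(1-\pi_{i+1})=n/(n-1)$ is independent of $i$, so the expectation on the right-hand side of Theorem~\ref{theo main intro} telescopes:
\[
\frac{1}{n}\sum_{i=1}^n\bigl[\pi_i\tilde\delta_i^+ + (1-\pi_i)\tilde\delta_i^-\bigr]=\frac{\tilde x_n-\tilde x_1}{n-1}.
\]
Uniqueness of the defining system of $\{x_i\}$ under the involution $x_i\mapsto -x_{n+1-i}$ forces $x_{n+1-i}=-x_i$, so $\tilde x_n-\tilde x_1 = 2\tilde x_n$, and Theorem~\ref{theo main intro} gives $W_1(\tilde X,Z)\le (2x_n/n)(n/(n-1))^{3/2}$.

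The hard part is then to establish $x_n\le 2\sqrt{\ln n}$ for $n>100$. I would do this by a one-sided bootstrap on the right tail. Set $T_i=-S_i=\sum_{j>i}x_j$; since $x_1<\cdots<x_n$, whenever $x_{i+1}>0$ one has $T_i\ge (n-i)x_{i+1}$, and combining with the recurrence gives $(x_{i+1}-x_i)(x_i+x_{i+1})\le 2/(n-i)$, i.e.\ $x_{i+1}^2-x_i^2\le 2/(n-i)$. Meanwhile, $Q_n=n-1$ together with the symmetry forces the set $\{j:x_j^2\le 2\}$ to be a symmetric interval around $(n+1)/2$ of length at least $(n+1)/2$, and in particular to contain some $i_0\ge 3n/4$ with $x_{i_0}^2\le 2$ (and $x_{i_0}>0$). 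Summing the per-step inequality from $i_0$ to $n-1$ and using $H_k\le\ln k+1$ yields $x_n^2\le 2+2(\ln(n/4)+1)\le 4\ln n$ for $n>100$. The main subtlety is that the sharp asymptotic $x_n\sim\sqrt{2\ln n}$ is delicate, but the slack constant $2$ we require is well within reach of this elementary argument.
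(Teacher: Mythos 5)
Your proposal is correct and, for the algebraic core of the argument (the variance normalisation, the collapse of the weights to $\pi_i=(n-i)/(n-1)$, and the telescoping of the expectation to $\tilde x_n-\tilde x_1$), it matches the paper's proof step-for-step; the paper packages the telescoping as an Abel summation exploiting that $\pi_{i+1}-\pi_i$ is constant, while you package it via the equivalent identity $\pi_i+(1-\pi_{i+1})=n/(n-1)$, but this is the same computation. The paper's symmetry relation $x_{n+1-i}=-x_i$ and the final form $W_1\le\frac{2\epsilon^3}{n}x_n$ are identical in both arguments.

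Where you genuinely depart from the paper is in the inputs about the configuration $\{x_i\}$: the paper cites Lemma 2.1 of \cite{chen2023optimal} for $\sum x_i^2=n-1$ and Lemma 3.2 of \cite{chen2023optimal} for $x_n\le\sqrt{2(1+\ln m)}$, whereas you derive both from scratch. Your one-step identity $S_{i+1}x_{i+1}-S_ix_i=x_{i+1}^2-1$ gives the variance cleanly, and your bootstrap (the per-step inequality $x_{i+1}^2-x_i^2\le 2/(n-i)$ on the positive tail, combined with a Markov-plus-symmetry argument to locate an $i_0\ge 3n/4$ with $x_{i_0}^2\le 2$, then a harmonic sum) is a genuinely elementary alternative to the cited lemma. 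It is weaker than Chen et al.'s sharp statement but entirely sufficient for the slack constant $2$ in the target bound, and it makes the proof self-contained. This is a nice trade-off: one loses a bit of precision in exchange for not importing external lemmas.

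Two very small slips, neither of which affects the conclusion. First, you say ``summing from $i=0$ to $n-1$'': there is no $x_0$, and the one-step identity is only valid for $i=1,\dots,n-1$; summing over that range and using $S_n=0$, $S_1=x_1$ gives $-x_1^2=(Q_n-x_1^2)-(n-1)$, i.e.\ $Q_n=n-1$ as you claim. Second, when you locate $i_0$, the clean statement is that the interval $\{j:x_j^2\le 2\}$ is centred at $(n+1)/2$ and has more than $(n+1)/2$ elements, hence its right endpoint is at least $\lceil 3(n+1)/4\rceil-1\ge 3n/4$ for $n>100$; this is what makes $i_0\ge 3n/4$ with $x_{i_0}>0$ and $x_{i_0}^2\le 2$ available. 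With those two points tightened, your argument is complete.
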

\begin{proof}
By Lemma 2.1 in \cite{chen2023optimal}, we know that $\Var[X]=\sum_{i=1}^nx_i^2/n={(n-1)}/{n}$. Hence, to satisfy  condition \eqref{condition1-intro} we have to consider the variable $\epsilon X$ with $\epsilon^2=\frac{n}{n-1}$. Remark that
$$
\pi_i=\epsilon^2\sum_{j=1}^i (x_i-x_j)x_j+i
$$
and
\begin{align*}
\pi_{i+1}-\pi_i=\epsilon^2(x_{i+1}-x_i)\sum_{j=1}^i x_j+1=1-\epsilon^2=-\frac{1}{n-1}.
\end{align*}
Hence, we have $\pi_i=1-(i-1)/(n-1)$. 
Now, we compute
\begin{align*}
W_1(\epsilon X,Z)&\leq \frac{\epsilon}{n}\sum_{i=1}^n\left\{|\pi_i|(x_{i+1}-x_i)+|1-\pi_i|(x_i-x_{i-1})\right\}\\
&= -\frac{\epsilon}{n}x_1(|\pi_1|+|1-\pi_2|)+\frac{\epsilon}{n}\sum_{i=2}^{n-1} x_i\left\{|\pi_{i-1}|+|1-\pi_i|-|\pi_i|-|1-\pi_{i+1}|\right\}\\
&\quad+\frac{\epsilon}{n}x_n(|\pi_{n-1}|+|1-\pi_n|)\\
&= -\frac{\epsilon}{n}x_1(1-(\pi_2-\pi_1))+\frac{\epsilon}{n}\sum_{i=2}^{n-1} x_i\left\{(\pi_{i+1}-\pi_i)-(\pi_i-\pi_{i-1})|\right\}\\
&\quad+\frac{\epsilon}{n}x_n(1-(\pi_n-\pi_{n-1}))\\
&=\frac{\epsilon^3}{n}(x_n-x_1)=\frac{2\epsilon^3}{n}x_n.
\end{align*}
By Lemma 3.2 in \cite{chen2023optimal}, we have $x_n\leq \sqrt{2(1+\ln(m))}$ where $m=n/2$ if $n$ is even and $m=(n+1)/2$ if $n$ is odd, when $n>100$. This entails that $x_n \leq 2\ln(n)$ for $n>100$.
\end{proof}

We conclude the section in anticlimactic fashion, with an interesting example which we so far were not able to tackle. 
\begin{remark}\label{rem:vol}
 Let $V_i(K)$ denote the $i^{\text{th}}$ intrinsic volume of  a convex body $K\subset \R^n$ (see e.g.\ \cite{garino2023total} for a definition and some context). If $(K_n)_{n\in\N}$ is a sequence of convex bodies of dimension $n$ with constant diameter and if $Y_n$ is a random variable defined on $\left\{0,\ldots,n\right\}$ with mass function $p_{i}\propto V_i(n^{\alpha }K_n)$, the question of whether $(Y_n-E[Y_n])/\sqrt{\Var[Y_n]}$ converges to a standard Gaussian variable $Z$ when $n\to\infty$ remains open. In the case of the unit ball $B_n\subset \R^n$, we have
$V_i(n^{\alpha}B_n)=n^{\alpha i}\binom{n}{i}\pi^{i/2}{\Gamma\left(1+\frac{n-i}{2}\right)}/{\Gamma\left(1+\frac{n}{2}\right)}$
for $i=0,\ldots,n$. Numerically, we see that in this case,  when $\alpha< 1/2$ and $n$ is large enough,  then the coefficients from \eqref{eq_ai3gau}   satisfy $0\leq \pi_i \leq 1$ for all $i=0,\ldots,n$. If we could verify this it would follow  that $W_1((Y_n-E[Y_n])/\sqrt{\Var[Y_n]},Z)\leq 1/\sqrt{\Var[Y_n]}$ under these conditions. However, the coefficients $(\pi_i)$ cannot be computed analytically and the Propositions \ref{prop borne inf} and \ref{prop borne sup}, although the  conditions seem verified, are difficult to use since $p_{i+1}/p_{i}$  doesn't have a simple expression. Observe that the necessary conditions \eqref{necessary gaus} are satisfied since in this case it holds from a convex domination argument that   $\Var[Y_n]\leq \E[Y_n]$   (Arturo Jaramillo, private communication) and   
$  \E[Y_n]+\Var[Y_n]\leq 2V_1(\alpha^n B_n)=O(n^{\alpha+1/2})
$ (see Corollary 2.8 in \cite{garino2023total}).  
\end{remark}

\subsection{Stationary distribution of Erlang-C system}\label{sn:queue}

Consider the stationary distribution of the Erlang-C queuing system
\begin{equation*}
p_i=\begin{cases}
p_0\left(\frac{\lambda}{\mu}\right)^{i}\frac{1}{i!}&\text{  if  }i=0,\ldots,n\\
p_0\left(\frac{\lambda}{\mu}\right)^{i}\frac{1}{n^{i-n}n!}&\text{  if  }i=n+1,\ldots
\end{cases}
\end{equation*}
where $n\in \N$, $\lambda,\mu>0$ with $\lambda<\mu n$. Let $X\sim p$ and $Z$ be a continuous variable with density given by 
$$q(x)=C\exp\left(\frac{1}{\mu}\int_0^x b\right), \quad x\in \R$$ 
with $C>0$ a normalisation constant and $$b(x)=\mu\max\left\{-x,\sqrt{\frac{\mu}{\lambda}}(\frac{\lambda}{\mu}-n)\right\}.$$ It is known that (see Theorem 1 in \cite{braverman2017stein})
$$
W_1\left(\sqrt{\frac{\mu}{\lambda}}\left(X-\frac{\lambda}{\mu}\right),Z\right)\leq 205\sqrt{\frac{\mu}{\lambda}}.
$$ 
Direct  application of our Theorem \ref{theo main intro} does not yield a competitive bound however a more refined approach  (anticipated in Theorem \ref{rem:notroutine}) yields the following result. 
\begin{proposition}
Let $X$ and $Z$ be defined as above. We have
$$
W_1\left(\sqrt{\frac{\mu}{\lambda}}\left(X-\frac{\lambda}{\mu}\right),Z\right)\leq  31\sqrt{\frac{\mu}{\lambda}}. 
$$
\end{proposition}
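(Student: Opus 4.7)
The plan is to apply Theorem \ref{rem:notroutine} with weight $w \equiv 1$. Writing $\tX=\sqrt{\mu/\lambda}(X-\lambda/\mu)$ on the equally-spaced lattice $x_i=\sqrt{\mu/\lambda}(i-\lambda/\mu)$, $i\in\N$, with mesh $\m=\sqrt{\mu/\lambda}$, one has $s(x)=q'(x)/q(x)=b(x)/\mu$: this equals $-x$ for $x\leq x_*$ and the constant $-x_*$ for $x>x_*$, where $x_*=\sqrt{\mu/\lambda}(n-\lambda/\mu)=x_n>0$. In particular $s(x_i)=\sqrt{\mu/\lambda}(\lambda/\mu-d(i))$ with $d(i)=\min(i,n)$. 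The two conditions in \eqref{condition1-intro} reduce to $\E[d(X)]=\lambda/\mu$ and $\E[X(d(X)-\lambda/\mu)]=\lambda/\mu$, which are the standard Erlang-C stationary identities obtained by testing the generator against $f(i)=i$ and $f(i)=i^2$. The strict-monotonicity hypothesis on $s$ must here be read as weak monotonicity, which is justified by perturbing the drift slightly below $x_*$ and passing to the limit.

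Next I compute the weights $(\pi_i)_{i\in\N}$ through the recurrence
\[
\pi_i p_i=\pi_{i-1}p_{i-1}+p_i-\m\sum_{j=0}^{i-1}s_j p_j,\qquad \pi_0=1.
\]
The detailed balance $\lambda p_{j-1}=d(j)\mu p_j$ gives $\m s_j p_j=p_j-p_{j-1}$ for $j\geq 1$ and $\m s_0 p_0=p_0$, so the sum telescopes uniformly in $i$ to $p_{i-1}$. The recurrence collapses to $\pi_i p_i=\pi_{i-1}p_{i-1}+p_i-p_{i-1}$ and yields $\pi_i=1$ for every $i\in\N$ by induction. Hence the bespoke derivative is just the forward derivative, $\E[|\pi(\tX)|+|1-\pi(\tX)|]=1$, and $\pi w\in L^1(p)$ trivially.

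With $\pi\equiv 1$, $w\equiv 1$, $\|w''\|_\infty=0$ and uniform mesh $\m$, Theorem \ref{rem:notroutine} collapses to
\[
W_1(\tX,Z)\leq \m\sum_{i=0}^{\infty}C_q^{0,i+1}(1)\,p_i,\qquad C_q^{0,i+1}(1)=\tfrac12\sup_{h\in\mathrm{Lip}(1)}\sup_{[x_i,x_{i+1}]}|f_h''|.
\]
The pointwise bound of Lemma \ref{prop borne} on $|f_h''|=|(f_h'w)'|$ with $w=1$ reads $|s+s^2\bar F/q-s'\bar F/q|\Gamma_1+|s-s^2F/q+s'F/q|\Gamma_2+1$. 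Because $q$ is of Gaussian form on $(-\infty,x_*]$ and exponentially decaying with rate $x_*$ on $[x_*,\infty)$, each of $F,\bar F,\Gamma_1,\Gamma_2$ and the Mills-type ratios $F/q,\bar F/q$ admit closed-form or sharply estimable expressions on the three regimes $x<0$, $0\leq x\leq x_*$, $x>x_*$; the jump of $s'$ at $x_*$ appears only multiplied by an integrable tail probability and is therefore harmless. This produces an explicit majorant $M(x_i)$ for $C_q^{0,i+1}(1)$, locally adapted to each lattice interval.

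The final step is to verify $\E[M(\tX)]\leq 31$ by splitting the expectation over $\{i<n\}$, $\{i=n\}$ and $\{i>n\}$. On $\{i<n\}$ the Erlang-C has Poisson-like mass and $M$ is $O(1)$ of Gaussian-Mills-ratio type; on $\{i>n\}$ the mass is geometric of ratio $\lambda/(n\mu)$ and $M$ converges to a bounded limit depending on $x_*$ and the Mills ratio at $x_*$. The main obstacle, and the reason a global supremum of $M$ yields the constant $205$ of \cite{braverman2017stein} rather than $31$, is the transition around $i=n$, where the jump in $s'$ and the change of character of $q$ inflate the pointwise bound; pairing the local factors $C_q^{0,i+1}(1)$ with the small probability mass $p_i$ concentrated there is exactly the gain one exploits through Theorem \ref{rem:notroutine} compared with the global Theorem \ref{theo main intro}.
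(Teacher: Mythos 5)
Your approach is the same as the paper's: apply Theorem~\ref{rem:notroutine} with a constant weight, show that the bespoke derivative is just the forward difference ($\pi_i\equiv1$), and then bound the local Stein factors $C_q^{0,i}$ separately on $\{i\le n\}$ and $\{i>n\}$. Your computation that $\pi_i = 1$ via detailed balance and telescoping is correct, and the observation that taking $w\equiv 1$ instead of $w\equiv\mu$ changes nothing (since $w'=0$ and the Stein solutions rescale) is sound. Your reading of the standardisation conditions as the stationary identities for $X$ and $X^2$ under the Erlang-C generator is also fine.

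The gap is in the final and crucial step. You say you will control $|f_h''|$ by plugging the three-regime structure of $q$ (Gaussian below $x_n$, exponential above) into the generic bound of Lemma~\ref{prop borne}, and that this ``produces an explicit majorant $M(x_i)$''; you then assert $\E[M(\tX)]\le 31$ by splitting over $\{i<n\}$, $\{i=n\}$, $\{i>n\}$. But none of this is carried out: there is no estimate of the Gaussian/exponential Mills ratios, no numerical bound on $M$ in either regime, and no control of the transition at $x_n$. The paper closes this gap by citing two quantitative inputs from \cite{braverman2017stein}: Lemma~3 there gives $|f_h''(x)|\le (23+13/x_n)/\mu$ for $x\le x_n$ and $|f_h''(x)|\le 2/\mu$ for $x\ge x_n$, and equation~(3.15) there gives $\mathbb P[X\le x_n]\le (2+\sqrt{\mu/\lambda}\,)x_n$; plugging these two facts into Theorem~\ref{rem:notroutine}, using $\mu\le\lambda$ and $\pi_i\equiv1$, is exactly what produces the constant $31$. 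Without those (or an equivalent hard computation of the Stein factor and tail probability for this hybrid density, which your Lemma~\ref{prop borne}-route does not actually supply), the argument does not establish any explicit numerical constant, let alone $31$. You have the right skeleton but have omitted the quantitative core of the proof.
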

\begin{proof}
As a Stein operator for $Z$, we can use 
$
\T_Z f = \mu f'+bf
$ (see equation (3.2) in \cite{braverman2017stein}). So, in this example, we have $w =\mu$, $s=b$ and $x_i=\sqrt{{\mu}/{\lambda}}\left(i-{\lambda}/{\mu}\right)$ for $i \in \mathbb N$. We can compute that $\E[b(X)]=0$, $\E[Xb(X)]+\mu=0$ and $\pi_i=1$ for all $i\in\N$ (i.e.\ here the bespoke derivative is none other than the right derivative). We have $\lim_{i\to\infty}\pi_{i+1}p_{i+1}w_{i+1}/(\pi_ip_iw_i)=\lambda/(\mu n)<1$ and thus $\pi w\in L^1(p)$. 
We also know that (see Lemma 3 in \cite{braverman2017stein}) 
$$
|f''_h(x)|\leq 
\begin{cases}
(23+\frac{13}{x_n})/\mu&\text{  if  }x\leq x_n\\
2/\mu&\text{  if  }x\geq x_n.
\end{cases}
$$ 
Hence, returning to the notations introduced in Theorem \ref{rem:notroutine}, we have $C^{0,i}_q(w)\leq (23+{13}/{x_n})$ when $i=0,\ldots,n$ and $C^{0,i}_q(w)\leq 2$ when $i\geq n+1$. Using Theorem \ref{rem:notroutine}, 
we get
\begin{align*}
W_1\left(\sqrt{\frac{\mu}{\lambda}}\left(X-\frac{\lambda}{\mu}\right),Z\right)&\leq \frac{1}{2}\sqrt{\frac{\mu}{\lambda}}\sum_{i=0}^\infty\left\{C^{0,i+1}_q(w)|\pi_i|+C^{0,i}_q(w)|1-\pi_i|\right\}p_i\\
&\leq \frac{1}{2}\sqrt{\frac{\mu}{\lambda}}\left(\left(23+\frac{13}{x_n}\right)\sum_{i=0}^{n-1} p_i
+2\sum_{i=n}^{\infty}p_i\right)\\
&\leq \frac{1}{2}\sqrt{\frac{\mu}{\lambda}}\left(23+13\left(2+\sqrt{\frac{\mu}{\lambda}}\right)\right)
\end{align*}
where we have used the fact that $\mathbb{P}[X\leq x_n]\leq (2+\sqrt{\mu/\lambda})x_n$ (see equation (3.15) in \cite{braverman2017stein}). As we assume $\mu\leq\lambda$, we have the desired conclusion.
\end{proof}

\section*{Acknowledgements}
We thank Ivan Nourdin for drawing our attention to the  interesting question in Remark \ref{rem:vol}.  Also, we thank Anum Fatima and Gesine Reinert for drawing our attention to their paper  \cite{fatima2022stein} which  contains a generalisation of the bound from \cite{goldstein2013stein} allowing, like us,  to compare discrete and  continuous distributions on the real line, in so-called bounded Wasserstein. We have so far not been able to tackle their example (see their Section 3.2.2) with our methods. Finally, we note that a lot of computations were  performed through the software \texttt{Mathematica}; interested readers are invited to browse the webpage https://yvik.swan.web.ulb.be/research.html where they can download the corresponding notebook. 

\bibliographystyle{plain}
\bibliography{weibibl}

\appendix

\section{Proofs}
\label{sec:proofs}

\begin{proof}[Proof of Proposition \ref{prop:pstinope}]
If $I$ is finite, the assertion is trivial, so we consider $I=\N$. Let $f\in \mathcal{X}^{\star}$ be bounded. We have to show that $\E\left[w(X)\Delta^{\pi} f(X)\right]=\E\left[\frac{(\Delta^{\pi})^* (wp)}{p}(X)f(X)\right]$. By the Fubini theorem for infinite series, this is true if $\sum_{i,j}\left|f(x_j)\Delta^{\pi}_{ij}w(x_i)p_i\right|<\infty$. In fact, we have
\begin{align*}
\sum_{i=0}^{\infty}\sum_{j=0}^{\infty}\left|f(x_j)\Delta^{\pi}_{ij}w(x_i)p_i\right|&=\sum_{i=0}^{\infty}|\pi_if(x_{i+1})|w(x_i)p_i+\sum_{i=0}^{\infty}(1-2\pi_i)f(x_{i})|w(x_i)p_i\\
&\quad +\sum_{i=1}^{\infty}|(\pi_i-1)f(x_{i-1})|w(x_{i})p_i\\
&\leq ||f||_{\infty}  \left(4\E[|\pi(X)w(X)|]+2\E[|w(X)|]\right). \qedhere
\end{align*}
\end{proof}

\begin{proof}[Proof of Proposition \ref{prop:qsteinope}]
See Proposition 2.14 in \cite{dobler2015stein} and observe that the expression of $S$ in Remark 2.15 in \cite{dobler2015stein} is simply $\tau_q/w$ in our notations. 
\end{proof}

\begin{proof}[Proof of Proposition \ref{prop a}]
First, we consider $I=\left\{0,\ldots,\ell\right\}$. 
As \eqref{eq_deltas} is equivalent to \eqref{eq stein a}, choosing $f(x)=1$ and $f(x)=x$ in \eqref{eq stein a}, we obtain \eqref{condition1} because $\Delta^{\pi} 1=0$ and $\Delta^{\pi} x=1$. 
Hence, the equations \eqref{condition1} are necessary and we have to show that they are sufficient. Set $\delta_i:=\delta_i^-=x_i-x_{i-1}$. Equation \eqref{eq_deltas} means
\begin{equation}
\label{truc1}
-\frac{\pi_0}{\delta_1}w_0p_0+\frac{\pi_1-1}{\delta_1}w_1p_1=-s_0p_0,
\end{equation}
\begin{equation}
\label{truc3}
\frac{\pi_{\ell-1}}{\delta_{\ell}}w_{\ell-1}p_{\ell-1}+\frac{1-\pi_{\ell}}{\delta_{\ell}}w_{\ell}p_{\ell}=-s_{\ell}p_{\ell},
\end{equation}
and
\begin{equation}
\label{truc2}
\frac{\pi_{i-1}}{\delta_i}w_{i-1}p_{i-1}+\left(\frac{1}{\delta_i}-\pi_i\left[\frac{1}{\delta_i}+\frac{1}{\delta_{i+1}}\right]\right)w_ip_i+\frac{\pi_{i+1}-1}{\delta_{i+1}}w_{i+1}p_{i+1}=-s_ip_i
\end{equation}
for all $i=1,\ldots, \ell-1$. By computation, we can see that
the coefficients $(\pi_i)$ given in \eqref{eq_ai3} satisfy \eqref{truc1} and \eqref{truc2} for all $i=1,\ldots, \ell-1$. Since, we want $\pi_{\ell}=0$, we must have
$$
\sum_{j=0}^{\ell} \left((x_j-x_\ell)s(x_j)+w(x_j)\right)p_j=0.
$$
Moreover, equation \eqref{truc3} is equivalent to
$$
\sum_{j=0}^\ell \left((x_j-x_{\ell-1})s(x_j)+w(x_j)\right)p_j=0.
$$
The last two equations are equivalent to \eqref{condition1}. Hence, if they are satisfied, the equations \eqref{truc1}, \eqref{truc3} and \eqref{truc2} are verified and we have $\pi_0=1$ and $\pi_{\ell}=0$.

Now, we assume that $I=\N$. Equation \eqref{eq_deltas} means that equation \eqref{truc1} is satisfied as well as equation \eqref{truc2} for all $i\geq 1$. By computation, we can see that
the coefficients $(\pi_i)$ given in \eqref{eq_ai3} satisfy these equations. 

If $\pi w\in L^1(p)$, it entails that equation \eqref{eq stein a} holds for all bounded $f\in\R^{\ell}$. In particular, if we take the functions $f=1$ and $f_n(x)=x1_{[-n,n]}(x)$, we obtain $\E[s(X)]=0$ and $\E[w(X)\Delta^{\pi}f_n(X)]=-\E[f_n(X)s(X)]$. As $|\Delta^{\pi}f_n|\leq 1$ and $w\in L^1(p)$, we have $\E[w(X)\Delta^{\pi}f_n(X)]\to \E[w(X)]<\infty$ by dominated convergence. Since $-s$ is increasing, $(-f_ns)_{n\in\N}$ is an increasing sequence of functions (for $n$ large enough) integrable with respect to $X$ and which converges simply to $-\mathrm{Id}\, s$. 
Hence, we have  $-\E[f_n(X)s(X)]\to -\E[Xs(X)]<\infty$ by monotone convergence and $\E[w(X)]=-\E[X s(X)]$. \qedhere

\end{proof}

\begin{proof}[Proof of Proposition \ref{w=tau2}]
By equation \ref{eq:w=tau}, we know that $\E[X]=\E[Z]$ and $\Var[X]=\E[\tau_q(X)]$ which can be written as $\E[X^2]-\E[X]^2= \alpha \E[X^2]+\beta \E[X]+\gamma$. Recall that the Stein kernel satisfies $\Var[Z]=\E[\tau_q(Z)]$. Hence, we have
$$
(1-\alpha)\E[X^2]=\E[X]^2+\beta\E[X]+\gamma=\E[Z]^2+\beta\E[Z]+\gamma=(1-\alpha)\E[Z^2].
$$ 
We deduce that $\Var[X]=\E[X^2]-\E[X]^2=\E[Z^2]-\E[Z]^2=\Var[Z]$.
\end{proof}

\begin{proof}[Proof of Proposition \ref{prop:gaussthirdmom}]
We compute
\begin{align*}
\E\left[\pi\left(Y\right)\right]&=\sum_{i=0}^{\ell} \sum_{j=0}^i \left(\frac{1}{\sigma^2}(j-i)(\mu-j)+1\right)p_j \\
&=\sum_{j=0}^{\ell} \sum_{i=j}^{\ell} \left(\frac{1}{\sigma^2}(j-i)(\mu-j)+1\right)p_j \\
&=\sum_{j=0}^{\ell}\left\{\frac{1}{\sigma^2}(\mu-j) \sum_{i=j}^{\ell} \left(j-i\right)+\ell-j+1\right\}p_j \\
&=\sum_{j=0}^{\ell}\left\{\frac{1}{2\sigma^2}(j-\mu)(\ell-j)(\ell-j+1)\right\}p_j+\ell-\mu+1 \\
&=\frac{1}{2\sigma^2}\left\{-\sigma^2(2\ell+1)-\mu\E[Y^2]+\E[Y^3]\right\}+\ell-\mu+1\\
&=\frac{1}{2\sigma^2}\left\{-\sigma^2(2\ell+1+\mu)-\mu^3+\E[Y^3]\right\}+\ell-\mu+1\\
&=\frac{1}{2}-\frac{3\mu}{2}-\frac{\mu^3}{2\sigma^2}+\frac{1}{2\sigma^2}\E[Y^3]\\
&=\frac{1}{2}\left\{1+\frac{1}{\sigma^2}\E\left[(Y-\mu)^3\right]\right\}.\qedhere
\end{align*}
\end{proof}

\begin{proof}[Proof of  Lemma \ref{prop borne}]

For the bound on $|f_h'|$, see Proposition 2.14 in \cite{dobler2015stein}. 
Now, we consider the bound on $|(f'w)'|$. 
For $h\in \mathrm{Lip}(1)$, the equality $\T_{q,w} f_h=h-\E[h(Z)]$ can be rewritten as $f_h'w=-sf_h+h-\E[h(Z)]$, from which we deduce that $(f_h'w)'=-sf_h'-s'f_h+h'$. 
Take $x\in (a,b)$. By Lemma 5.3 (b) in \cite{dobler2015stein}, we know that
\begin{align*}
f_h(x)&=\frac{1}{(qw)(x)}\int_a^x (\E[h(Z)]-h)q=-\frac{1}{qw(x)}\left\{\bar{F}(x)\int_a^xh'F
+F(x)\int_x^b h' \bar{F}\right\}
\end{align*}
and by equation (47) in \cite{dobler2015stein}, we have
\begin{equation*}
f_h'(x)=\frac{1}{qw^2(x)}\left\{\left(qw+s\bar{F}\right)(x)\int_a^xh'F
-\left(qw-sF\right)(x)\int_x^b h' \bar{F}\right\}.
\end{equation*}
Putting the three last equalities together, we obtain
\begin{align*}
&\left|(f_h'w)'(x)\right|=\left|\left(-sf_h'-s'f_h+h'\right)(x)\right|\\
&=\left|\frac{-sqw-s^2\bar{F}+s'w\bar{F}}{qw^2}(x)\int_a^xh'F
+\frac{sqw-s^2F+s'wF}{qw^2}(x)\int_x^b h' \bar{F}+h'(x)\right|\\
&\leq\left|\frac{-sqw-s^2\bar{F}+s'w\bar{F}}{qw^2}(x)\right|\int_a^x|h'|F
+\left|\frac{sqw-s^2F+s'wF}{qw^2}(x)\right|\int_x^b |h'| \bar{F}+|h'(x)|.
\end{align*}
Since $h$ is Lipschitz, we get the desired result.
\end{proof}

\begin{proof}[Proof of Lemma \ref{prop borne 2}]
In this proof, we set $w=\tau_q$ and $s=\overline{\mathrm{Id}}$. Concerning equation \eqref{eq Gamma12}, see Lemma 5.2 in \cite{dobler2015stein}.
By Lemma \ref{prop borne}, we know that
\begin{align*}
|f'_h(x)|\leq \frac{1}{qw^2(x)}\left\{\left(qw+s\bar{F}\right)(x)\int_a^x F
+\left(qw-sF\right)(x)\int_x^b \bar{F}\right\}
\end{align*}
for $x\in (a,b)$. Observe that
\begin{align*}
\int_a^x F=xF(x)-\int_a^x t q(t)dt=-sF(x)+qw(x)
\end{align*}
and
\begin{equation*}
\int_x^b \bar{F}=\int_x^b t q(t)dt-x\bar{F}(x)=qw(x)+s\bar{F}(x).
\end{equation*}
So, we get the desired expression for $|f'_h|$. Now, we look at the bound on $|(f'_hw)'|$. We have
\begin{align*}
\left(\frac{s}{w}-\frac{s^2F}{qw^2}-\frac{F}{qw}\right)\Gamma_2&= \left(\frac{s}{w}+\frac{s^2\bar{F}}{qw^2}+\frac{\bar{F}}{qw}-\frac{s^2}{qw^2}-\frac{1}{qw}\right)(\Gamma_1+s)\\
&=\left(\frac{s}{w}+\frac{s^2\bar{F}}{qw^2}+\frac{\bar{F}}{wq}\right)\Gamma_1-\left(\frac{s^2}{qw^2}+\frac{1}{qw}\right)\Gamma_1\\
&\quad +\left(\frac{s}{w}-\frac{s^2F}{qw^2}-\frac{F}{qw}\right)s.
\end{align*}
Then, we compute
\begin{align*}
&-\left(\frac{s^2}{qw^2}+\frac{1}{qw}\right)\Gamma_1 +\left(\frac{s}{w}-\frac{s^2F}{qw^2}-\frac{F}{qw}\right)s\\
&\quad=-\left(\frac{s^2}{qw^2}+\frac{1}{qw}\right)\left(qw-sF \right)+\left(\frac{s}{w}-\frac{s^2F}{qw^2}-\frac{F}{qw}\right)s\\
&\quad=-\left(\frac{s^2}{qw^2}+\frac{1}{qw}\right)qw+\frac{s^2}{w}=-1.
\end{align*}
So, we have
\begin{equation*}
\left(\frac{s}{w}-\frac{s^2F}{qw^2}-\frac{F}{qw}\right)\Gamma_2=\left(\frac{s}{w}+\frac{s^2\bar{F}}{qw^2}+\frac{\bar{F}}{wq}\right)\Gamma_1-1.
\end{equation*}
and we obtain the new bound on $|(f'_hw)'|$. As a consequence, $|(f'_h w)'|\leq 2$ if 
$\frac{s}{w}+\frac{s^2\bar{F}}{qw^2}+\frac{\bar{F}}{wq}\geq 0$ (since $\Gamma_1>0$) and 
$\frac{s}{w}-\frac{s^2F}{qw^2}-\frac{F}{qw}\leq 0$ (since $\Gamma_2>0$). The first condition is equivalent to $F\leq \frac{sqw}{s^2+w}+1$ and the second one to $\frac{sqw}{s^2+w}\leq F$. The first equation is true if $\lim_{x\to b}sqw/(s^2+w)(x)= 0$ and (by taking the derivative on both sides)
\begin{equation}
\label{chose3}
q\geq \frac{-qw+s^2q}{s^2+w}+sqw\frac{2s-w'}{(s^2+w)^2}
=q\frac{-w^2+s^4+sw(2s-w')}{(s^2+w)^2}.  
\end{equation}
The condition on the limit in $b$ is always true because $s/(s^2+w)$ is bounded and $\lim_{x\to a}qw=0$, while the inequality \eqref{chose3} is equivalent to $-2w\leq sw'$. By a similar reasoning, we can show that $\frac{sqw}{s^2+w}\leq F$ is true if $-2w\leq sw'$. 
\end{proof}
 
\begin{proof}[Proof of Proposition \ref{prop borne3}]
We set $\mu=\E[Z]$. By Proposition \ref{prop borne 2}, we have to show that $-2\tau_q\leq \overline{\mathrm{Id}}\tau_q'$. 
This inequality translates into $-2\gamma-\mu\beta \leq (\beta+2\mu\alpha)x$ for all $x\in (a,b)$ when $Z\sim \mathrm{IP}(\alpha,\beta,\gamma)$. Assume first that the support of $q$ is $[a,b]$ with $a,b\in\R$. We know that $\tau_q(x)\geq 0$ for $x\in ]a,b[$ and $\tau_q(a)=0=\tau_q(b)$ by Proposition 2.1 (v) in \cite{Afendras}. Hence, we must have  $\tau_q(x)=\alpha(x-a)(x-b)=\alpha x^2-\alpha (a+b)x+\alpha ab$ with $\alpha<0$. So, in this case, we have $\beta=-\alpha(a+b)$ and $\gamma=\alpha ab$ and we have to check that
$$
\mu(a+b)-2ab\geq (2\mu-a-b)x
$$
for all $x\in [a,b]$. For $x=a$ this is equivalent to $(b-a)(\mu-a)\geq 0$ and for $x=b$ to $(b-\mu)(b-a)\geq 0$, which are both true since $a<\mu<b$. Hence, the inequality is verified for all $x\in [a,b]$.

Now assume that the support of $q$ is $[a,\infty[$ with $a\in\R$. We know that $\tau_q(x)> 0$ for $x>a$ and $\tau_q(a)=0$ by Proposition 2.1 (v) in \cite{Afendras}. Hence, we must have $\alpha>0$ and there exists  $c\in ]-\infty,a]$ such that $\tau_q(x)=\alpha(x-a)(x-c)=\alpha x^2-\alpha(a+c)x+\alpha ac$ with $\alpha>0$. So, in this case, we have $\beta=-\alpha(a+c)$ and $\gamma=\alpha ac$ and we have to check that
$$
\mu(a+c)-2ac\leq (2\mu-a-c)x
$$
for all $x\geq a$. Since $\mu\geq a\geq c$, we can replace $x$ by $a$ in the last inequality. This gives $0\leq (a-c)(\mu-a)$ which is true for the same reason. 
If the support of $q$ is $]-\infty,b]$ with $b\in\R$, we get the same conclusion by a similar reasoning. 


If the support of $q$ is $\R$, we have $-2\gamma-\mu\beta \leq (\beta+2\mu\alpha)x$ for all $x\in \R$, and thus $-2\tau_q\leq \overline{\mathrm{Id}}\tau_q'$, if and only if $\beta=-2\mu\alpha$ and $-\mu\beta/2=\alpha\mu^2\leq \gamma$. 
By Table 2.1 in \cite{Afendras}, we know that $\mu=0=\beta$, $\alpha=1/(n-1)=\gamma/n$ with $n>1$ when $X\sim t_n$ and $\beta=0=\alpha$ and $\gamma=\sigma^2$ when $X\sim \mathcal{N}(\mu,\sigma^2)$. For these values of the parameters, the conditions $\beta=-2\mu\alpha$ and $\alpha\mu^2\leq \gamma$ are true.
\end{proof}

\begin{proof}[Proof of Proposition \ref{prop borne sup}]
First, we show that (1) implies $\pi_i\leq 1$ for all $i=0,\ldots, \ell$. Observe that $\pi_i\leq 1$ if and only if $b_i:=\sum_{j=0}^{i-1}\left[(x_j-x_i)s_j+w_{j}\right]p_j\leq 0$ for $i=0,\ldots,\ell$ (with the convention $b_0=0$). Set $c_i:=p_{i}w_{i}- \delta_{i+1}\sum_{j=0}^is_jp_j$ and remark that 
\begin{align*}
b_i&=\sum_{j=0}^{i-1}\left(w_j-s_j\sum_{t=j+1}^i\delta_t\right)p_j=\sum_{j=0}^{i-1}w_j p_j-\sum_{t=1}^i\delta_t\sum_{j=0}^{t-1}s_jp_j\\
&=\sum_{k=0}^{i-1}\left(w_kp_k-\delta_{k+1}\sum_{j=0}^{k}s_jp_j\right)=\sum_{k=0}^{i-1}c_k.
\end{align*}
Since $b_0=0$ and $b_{\ell}=\pi_{\ell}w_{\ell}p_{\ell}-w_{\ell}p_{\ell}=-w_{\ell}p_{\ell}\leq 0$, we have $b_i\leq 0$ for all $i=1,\ldots,\ell$ if $b$ is decreasing or decreasing then increasing, $\text{i.e.}$ if 
there exist $i_1\in \left\{0,\ldots,\ell-1\right\}$ such that $c_i$ is negative for $i=0,\ldots,i_1$ and positive for $i=i_1+1,\ldots,\ell-1$.

Now we prove that (2) and (3) both entail (1). We set $d_0=p_0w_0/\delta_1-s_0p_0$ and $d_i:=p_{i}w_{i}/\delta_{i+1}-p_{i-1}w_{i-1}/\delta_i- s_{i}p_{i}$ for $i=1,\ldots,\ell-1$. We thus have $c_i/\delta_{i+1}=\sum_{j=0}^i d_j$. 
Assume that $c_{\ell-1}=w_{\ell-1}p_{\ell-1}+\delta_{\ell}s_{\ell}p_{\ell}\leq 0$. As $c_0=(w_0- \delta_1 s_0)p_0\leq 0$ and $c_{\ell-1}\leq 0$, we have $c_i\leq 0$ for all $i=0,\ldots,\ell-1$ if $c$ is monotone or decreasing then increasing, $\text{i.e.}$ if there exist $i_2\in\left\{0,\ldots,\ell-1\right\}$ such that $d_i$ is negative for $i=1,\ldots,i_2$ and positive for $i=i_2+1,\ldots,\ell-1$. 
We have thus $(2)\implies (1)$ with $i_1=\ell-1$. 

Now, assume that $c_{\ell-1}\geq 0$. Since $c_0\leq 0$ and $c_{\ell-1}\geq 0$, $c$ is negative then positive on $\left\{0,\ldots,\ell-1\right\}$ if $c$ is decreasing then increasing then decreasing, $\text{i.e.}$ if there exist $i_2,i_3\in\left\{0,\ldots,\ell-1\right\}$ such that $i_2\leq i_3$, $d_i$ is negative for $i=1,\ldots,i_2$ and $i=i_3+1,\ldots,\ell-1$ and positive for $i=i_2+1,\ldots,i_3$.
We thus have $(3)\implies (1)$ with $i_1\in [i_2,i_3]$.
\end{proof}

\begin{proof}[Proof of Proposition \ref{prop borne inf}]
First, we show that (1) implies $\pi_i\geq 0$ for all $i=0,\ldots, \ell$. Observe that $\pi_i\geq 0$ if and only if $b_i:=\sum_{j=0}^{i}\left[(x_j-x_i)s_j+w_{j}\right]p_j\geq 0$ for $i=0,\ldots,\ell$. Set $c_i:=p_{i}w_{i}- \delta_{i}\sum_{j=0}^{i-1}s_jp_j$ for $i=1,\ldots,\ell$ and remark that 
\begin{align*}
b_i=\sum_{j=0}^i\left(w_j-s_j\sum_{t=j+1}^i\delta_t\right)p_j=w_0p_0+\sum_{t=1}^i\left(w_tp_t-\delta_t\sum_{j=0}^{t-1}s_jp_j\right)=\sum_{t=0}^{i}c_t
\end{align*}
with the convention that $c_0=p_0w_0$. Since $b_0=p_0 w_0$ and $b_{\ell}=0$, we have $b_i\geq 0$  for all $i=i_1,\ldots,\ell$ if $b$ is decreasing or increasing then decreasing, i.e. if  there exist $i_1\in \left\{0,\ldots,\ell-1\right\}$ such that $c_i$ is positive for $i=1,\ldots,i_1$ and negative for $i=i_1+1,\ldots,\ell$. 

Now we prove that (2) and (3) both entail (1). We set $d_0=p_1w_1/\delta_1-s_0p_0$ and $d_i:=p_{i+1}w_{i+1}/\delta_{i+1}-p_{i}w_{i}/\delta_{i}- s_{i}p_{i}$ for $i=1,\ldots,\ell-1$. We thus have $c_i/\delta_{i}=\sum_{j=0}^{i-1} d_j$. 
Assume that $c_1=w_1p_1- \delta_1 s_0p_0\leq 0$. As $c_1\leq 0$ and $c_{\ell}=(w_{\ell}+\delta_{\ell}s_{\ell})p_{\ell}\leq 0$, we have $c_i\leq 0$ for all $i=1,\ldots,\ell$ if $c$ is monotone or decreasing then increasing, $\text{i.e.}$ if there exist $i_2\in\left\{0,\ldots,\ell-1\right\}$ such that $d_i$ is negative for for $i=1,\ldots,i_2$ and positive for $i=i_2+1,\ldots,\ell-1$. 
We have thus $(2)\implies (1)$ with $i_1=\ell-1$. 

Now, assume that $c_1\geq 0$. Since $c_1\geq 0$ and $c_{\ell}\leq 0$, $c$ is positive then negative on $\left\{1,\ldots,\ell\right\}$ if $c$ is increasing then decreasing then increasing, $\text{i.e.}$ if there exist $i_2,i_3\in\left\{0,\ldots,\ell-1\right\}$ such that $i_2\leq i_3$ and $d_i$ is positive for $i\in\left\{1,\ldots,i_2\right\}\cup\left\{i_3+1,\ldots,\ell-1\right\}$ and negative for $i\in\left\{i_2+1,\ldots,i_3\right\}$. We thus have $(3)\implies (1)$ with $i_1\in [i_2,i_3]$.
\end{proof}

\begin{proof}[Proof of Corollary \ref{coro panjer}]
 We have to check that the assumptions of Proposition \ref{prop panjer} hold. Set $\tau_i:=\tau_q(x_i)$ and $\overline{\mathrm{Id}}_i:=\overline{\mathrm{Id}}(x_i)$. By Proposition 2.1 in \cite{Afendras}, we know that $\tau_0=\tau_q(a)=0$ since $a\in\R$. The condition $\delta \overline{\mathrm{Id}}_0\geq \tau_0$ is thus directly verified. If $b<\infty$, we have also $\tau_q(b)=0$ by Proposition 2.1 in \cite{Afendras}. Take $\ell\in\N$ and observe that $\tau_q(a+\delta \ell)+\delta (\E[Z]-a-\delta \ell)$ is positive for $\delta=(\E[Z]-a)/\ell$ and negative for $\delta=(b-a)/\ell$. So, there exists $\delta>0$ such that $\tau_q(a+\delta \ell)+\delta (\E[Z]-a-\delta \ell)=0$. For $x>\E[Z]$, we know that $\tau_q$ is decreasing and $-\overline{\mathrm{Id}}$ increasing. Hence, $-\tau_q/\overline{\mathrm{Id}}$ is decreasing on $(\E[Z],b)$. 
 The condition $\tau_i+\delta \overline{\mathrm{Id}}_i\geq 0$ is thus satisfied for all $i\in I$. If $b=\infty$, the condition $\tau_i+\delta \overline{\mathrm{Id}}_i\geq 0$ is verified by definition of $\delta$ and we have to check that $\tau_q\in L^1(p)$. By the Raabe test, the sequence $(\tau_ip_i)$ is summable if $\lim_{i\to\infty}i\left(p_i\tau_i/(p_{i+1}\tau_{i+1})-1\right)>1$. We can compute that
$$
i\left(\frac{p_i\tau_i}{p_{i+1}\tau_{i+1}}-1\right)=i\left(\frac{\tau_i}{\tau_{i}+\delta \overline{\mathrm{Id}}_i}-1\right)=-\frac{i\delta \overline{\mathrm{Id}}_i}{\tau_{i}+\delta \overline{\mathrm{Id}}_i}=\frac{\delta i(\delta i+a-\E[Z])}{\tau_q(a+\delta i)+\delta \overline{\mathrm{Id}}(a+\delta i)}.
$$
The numerator and the denominator of the last expression are polynomials of degree two with leading term $\delta^2 i^2$ and $\alpha \delta^2 i^2$ respectively. Hence, the limit of this expression when $i\to\infty$ is $1/\alpha$. We can conclude with Proposition \ref{prop panjer}.  
\end{proof}

\begin{lemma}\label{boundlam}
Let $Z \sim \mathrm{Exp}(\lambda)$ be an exponential variable with parameter $\lambda$. We have $||f'_h||_{\infty}\leq \lambda$ for all $h\in \mathrm{Lip}(1)$. 
\end{lemma}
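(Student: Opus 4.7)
The plan is to invoke Lemma \ref{prop borne 2} with the natural choice $w = \tau_q$, since $\tau_q(x) = x/\lambda$ is the Stein kernel of $\mathrm{Exp}(\lambda)$ (this distribution is integrated Pearson). Because the law of $Z$ is supported on $[0,\infty)$ with mean $1/\lambda$, the ingredients appearing in that lemma can be computed in closed form: writing $F(x) = 1 - e^{-\lambda x}$, $\bar{F}(x) = e^{-\lambda x}$, $\overline{\mathrm{Id}}(x) = 1/\lambda - x$, direct integration gives $\Gamma_1(x) = x - (1-e^{-\lambda x})/\lambda$ and $\Gamma_2(x) = e^{-\lambda x}/\lambda$.

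Substituting these along with the identity $q(x)\tau_q(x)^2 = x^2 e^{-\lambda x}/\lambda$ into the first bound of Lemma \ref{prop borne 2}, the exponential factors cancel and one obtains
$$|f_h'(x)| \;\leq\; \frac{2\,\Gamma_1(x)\Gamma_2(x)}{q(x)\tau_q(x)^2} \;=\; \frac{2(\lambda x - 1 + e^{-\lambda x})}{\lambda x^2}$$
uniformly in $h \in \mathrm{Lip}(1)$. The remaining task is to verify that the right-hand side never exceeds $\lambda$. After the substitution $u = \lambda x$ this reduces to the single elementary inequality $e^{-u} \leq 1 - u + u^2/2$ for all $u \geq 0$. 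I would prove this by setting $\psi(u) = 1 - u + u^2/2 - e^{-u}$ and observing that $\psi(0) = \psi'(0) = 0$ while $\psi''(u) = 1 - e^{-u} \geq 0$; two integrations then give $\psi \geq 0$ on $[0,\infty)$.

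The only genuine work is the elementary one-variable inequality at the end; the rest is a direct calculation specialised from Lemma \ref{prop borne 2}. As a sanity check one observes, via the Taylor expansion $\lambda x - 1 + e^{-\lambda x} = (\lambda x)^2/2 + O(x^3)$, that the bound $\lambda$ is attained in the limit $x \to 0^+$, so the constant cannot be improved without a different argument. Note also that this bound is equivalent to the claim $\|\tau_q' f_h'\|_\infty \leq 1$ used in Section \ref{sn:expapp}, since $\tau_q'(x) = 1/\lambda$.
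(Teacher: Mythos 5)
Your proposal is correct and follows essentially the same route as the paper: both specialise Lemma \ref{prop borne 2} with $w=\tau_q(x)=x/\lambda$, compute the same quantity $|f_h'(x)|\leq 2(\lambda x - 1 + e^{-\lambda x})/(\lambda x^2)$, and then show this is at most $\lambda$. The only (cosmetic) divergence is in the last step: the paper shows the right-hand side is decreasing and computes its limit $\lambda$ at $0^+$ via two applications of L'H\^opital plus a further two levels of differentiation, whereas you verify the pointwise inequality directly by reducing it to $e^{-u}\leq 1-u+u^2/2$ and checking $\psi(0)=\psi'(0)=0$, $\psi''\geq 0$ — a slightly more economical finish to the same argument.
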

\begin{proof}
Set $w(x)=\tau_q(x)=x/\lambda$ and $G(x)=2\left(qw+s\bar{F}\right)\left(qw-sF\right)/qw^2(x)=2(e^{-\lambda x} + \lambda x -1)/(\lambda x^2)$. By Proposition \ref{prop borne 2}, we know that $||f'_h||_{\infty}\leq ||G||_{\infty}$ for all $h\in \mathrm{Lip}(1)$. 
Applying two times the Hospital' theorem, we get
\begin{align*}
\lim_{x\to 0}G(x)=\lim_{x\to 0}2\frac{1 - e^{-\lambda x} - \lambda x}{\lambda x^2}=\lim_{x\to 0}\frac{e^{-\lambda x}-1}{x}=\lim_{x\to 0}\lambda e^{-\lambda x}=\lambda.
\end{align*}
Hence, if we show that $G$ is decreasing, we will obtain $||G||_{\infty}= \lim_{x\to 0}G(x)=\lambda$. Since $G'(x)=(2 -\lambda x- e^{-\lambda x}(2+\lambda x))/(\lambda x^3)$,  $G'\leq 0$ is equivalent to $G_2(x)=2 -\lambda x- e^{-\lambda x}(2+\lambda x)\leq 0$. As $G_2(0)=0$, it is enough to show that $G_2$ is decreasing. We have $G_2'(x)=-\lambda+e^{-\lambda x}(\lambda+\lambda^2x)$ and thus $G_2'(0)=0$. Once again, it's enough to show that $G_2'$ is decreasing, which is true since $G_2''(x)=-\lambda^3xe^{-\lambda x}\leq 0$.

\end{proof}

\begin{definition}
The constants $b_0$ and $b_1$ from \cite[Lemma 3.4]{goldstein2013stein} are 
\begin{equation}
    \label{eqn:b0}
    b_0(\alpha, \beta) = \begin{cases}
    & 4 \max(|\alpha-1|,|\beta-1|)\mbox{ if }\alpha\leq 2,\ \beta\leq 2\\
    & (\alpha+\beta-2)\max\left(\frac{\alpha-1}{(\alpha-2)^2},\frac{|\beta-1|}{\beta^2}\right)\mbox{ if }\alpha\geq 2,\ \beta\leq 2\\
    & (\alpha+\beta-2)\max\left(\frac{|\alpha-1|}{\alpha^2},\frac{\beta-1}{(\beta-2)^2}\right)\mbox{ if }\alpha\leq 2,\ \beta\geq 2\\
    & (\alpha+\beta-2)\max\left(\frac{1}{\alpha-1},\frac{1}{\beta-1}\right)\mbox{ if }\alpha\geq 2,\ \beta\geq 2
    \end{cases}
\end{equation}
and 
\begin{equation}
    \label{eqn:b1}
    b_1(\alpha, \beta) = \begin{cases}
   & 4 \left(1+\frac{\max(\alpha,\beta)}{\alpha+\beta}\right)\mbox{ if }\alpha\leq 2,\ \beta\leq 2\\
    & \frac{(\alpha+\beta-2)^2}{\min(\alpha-2,\beta)^2}+2\max\left(\frac{\alpha}{\alpha-2},1\right)\mbox{ if }\alpha\geq 2,\ \beta\leq 2\\
    & \frac{(\alpha+\beta-2)^2}{\min(\alpha,\beta-2)^2}+2\max\left(1,\frac{\beta}{\beta-2}\right)\mbox{ if }\alpha\leq 2,\ \beta\geq 2\\
    & \frac{(\alpha+\beta-2)^2}{\min(\alpha-1,\beta-1)^2}+2\max\left(\frac{\alpha}{\alpha-1},\frac{\beta}{\beta-1}\right)\mbox{ if }\alpha\geq 2,\ \beta\geq 2
    \end{cases}
\end{equation}
\end{definition}

\begin{proof}[Proof of Lemma \ref{lem beta bound}]
In this proof, we write $w=\tau_q$ and $s=\overline{\mathrm{Id}}$. We have to prove inequality \eqref{eq lem beta}. We know that $|w'f_h'|\leq 2|w'|/(qw^2)\Gamma_1\Gamma_2$ on $]0,1[$ for all $h\in \text{Lip}(1)$ by Lemma \ref{prop borne 2}. We consider the subintervals $]0,1/2]$ and $]1/2,1[$ separately. We have $\Gamma_2\leq s(0)=\alpha/(\alpha+\beta)$ and we want to show that $w'/(qw^2)\Gamma_1\leq w'(0)/(s(0)^2+w'(0)s(0))=(\alpha+\beta)/(\alpha+\alpha^2)=:M_0$ on $]0,1/2]$. This is equivalent to $\Gamma_1-M_0qw^2/w'\leq 0$. As it is true in 0, it is also true on $]0,1/2]$ if the expression is decreasing, i.e. if 
$$
F-M_0\left(\frac{qsw}{w'}+qw-\frac{qw^2w''}{(w')^2}\right)\leq 0
$$ 
on $]0,1/2]$. In turn, as this is true in 0, it is true on $]0,1/2]$ if
$$
q-M_0q\left(\frac{s^2}{w'}-\frac{w}{w'}-2\frac{sww''}{(w')^2}+s-\frac{ww''}{w'}+2\frac{w^2(w'')^2}{(w')^3}\right)\leq 0
$$
or, equivalently,
$$
(w')^3-M_0\left((sw')^2-w(w')^2-2sww'w''+s(w')^3-w(w')^2w''+2w^2(w'')^2\right)\leq 0
$$
on $]0,1/2]$. In the case of the Beta distribution, this inequality becomes
\begin{align*}
g_0(x):=&2 (1 + \alpha) (-1 + \beta) - (-2 + \alpha^2 + \beta (7 + \beta) + 5 \alpha (-1 + 2 \beta)) x\\ 
&+ 4 (\alpha^2 + 4 \alpha \beta + \beta (2 + \beta)) x^2 - 
   4 (\alpha + \beta) (1 + \alpha + \beta) x^3\leq 0
\end{align*}
on $]0,1/2]$. We have $g_0(0)=2 (1 + \alpha) (-1 + \beta)$ and $g_0(1/2)=-1$, which are both negative since $\beta\leq 1$, and $g_0'(1/2)=2 (1 + \alpha - \beta)\geq 0$. The second derivative $g_0''(x)=8 (\alpha^2 + 4 \alpha \beta + \beta (2 + \beta))  - 
   24 (\alpha + \beta) (1 + \alpha + \beta) x$ is either positive or positive then negative on $]0,1/2]$. In both cases, these observations are sufficient to ensure that $g_0\leq 0$ on $]0,1/2]$. Combining the two bounds, we obtain that $w'/(qw^2)\Gamma_1\Gamma_2\leq 1/(1+\alpha)$ on $]0,1/2]$.
   
Now, we want to show that $-w'/(qw^2)\Gamma_2\leq (\alpha+\beta)/(\beta+\beta^2)=:M_1$ on $]1/2,1[$. We proceed in a similar way. It is equivalent to $\Gamma_2+M_1 qw^2/w'\leq 0$. As it is true in 1, it is also true on $]1/2,1]$ if the expression is increasing, i.e.  if
$$
-\bar{F}+M_1\left(\frac{qsw}{w'}+qw-\frac{qw^2w''}{(w')^2}\right)\geq 0
$$ 
on $]1/2,1]$. In turn, as this is true in 1, it is true on $]1/2,1]$ if
$$
q+M_1q\left(\frac{s^2}{w'}-\frac{w}{w'}-2\frac{sww''}{(w')^2}+s-\frac{ww''}{w'}+2\frac{w^2(w'')^2}{(w')^3}\right)\geq 0
$$
or, equivalently,
$$
(w')^3+M_1\left((sw')^2-w(w')^2-2sww'w''+s(w')^3-w(w')^2w''+2w^2(w'')^2\right)\geq 0
$$
on $]1/2,1]$. In the case of the Beta distribution, this inequality becomes
\begin{align*}
g_1(x):=&2 (1 - \alpha) (1 + \beta)
+ (2 - \alpha (7 + \alpha) + 5 \beta - 10 \alpha \beta - \beta^2) (x-1)\\
&-4 (\alpha (2 + \alpha) + 4 \alpha \beta + \beta^2) (x-1)^2 
 -4 (\alpha + \beta) (1 + \alpha + \beta) (x-1)^3\geq 0
\end{align*}
on $]1/2,1]$. We have $g_1(0)=2 (1 - \alpha) (1 + \beta)$ and $g_1(1/2)=1$, which are both positive since $\alpha\leq 1$, and $g_1'(1/2)=2 (1 -\alpha+\beta)\geq 0$. The second derivative $g_1''(x)=-8 (2 \alpha + \alpha^2 + 4 \alpha \beta + \beta^2) -24 (\alpha + \beta) (1 + \alpha + \beta) (x-1)$ is either negative or positive then negative on $]1/2,1[$. In both cases, these observations are sufficient to ensure that $g_1\geq 0$ on $]1/2,1[$. Using the inequality $\Gamma_1\leq -s(1)=\beta/(\alpha+\beta)$, we obtain that $-w'/(qw^2)\Gamma_1\Gamma_2\leq 1/(1+\beta)$ on $]1/2,1[$.
\end{proof}

\begin{proof}[Direct proof of inequality \eqref{eq:claim01}]
On the one hand, since 
\begin{align*}
&An\left(B (B + n - \sqrt{n (A + B + n)}) + 
   A (B - n + \sqrt{n (A + B + n)})\right)\\
   &\quad=\left(An + 
   B \sqrt{n (A + B + n)}\right) A\left( 
    - n + \sqrt{n (A + B + n)}\right)\geq 0,
\end{align*}
we have $\pi_i\geq 0$ for all $i=0,\ldots,n$. On the other hand, the inequality $\pi_i\leq 1$ is equivalent to 
\begin{equation*}
i (B - i + n) \left(-A (A + B) + (A-B)\left( \sqrt{n (A + B + n)}-n\right)\right)\leq 0.
\end{equation*}
This polynomial in $i$ is negative between its roots $0$ and $n+B$, and so between $0$ and $n$, if and only if $A (A + B) + (B-A)\left( \sqrt{n (A + B + n)}-n\right)\geq 0$. 
If $B\geq A$, it is trivially true. Otherwise, we have
\begin{align}
\label{eq inequality}
\frac{\sqrt{n (A + B + n)}-n}{A+B}&=\frac{1}{A+B} \int_0^{A+B}\left(\sqrt{n(x+n)}\right)'dx \nonumber\\
\nonumber
&=\frac{1}{A+B}\int_0^{A+B}\frac{1}{2}\frac{n}{\sqrt{n(x+n)}}dx\\
&\leq \frac{1}{2}\leq \frac{A}{A-B}.
\end{align}
\end{proof}

\begin{proof}[Alternative proof of inequality \eqref{eq:claim01}]
In this proof, we write $w=\tau_q$ and $s=\overline{\mathrm{Id}}$. We begin by showing that $\pi_i\geq 0$ for all $i=0,\ldots, n$ using Propositions \ref{prop borne inf}. The necessary condition $w_{\ell}\leq -\delta s_{\ell}$ becomes 
$$
B(A+B)+(A-B)\left(\sqrt{n (A + B + n)}-n\right)\geq 0,
$$
which is true as we have shown in \eqref{eq inequality}, and the condition $w_1\geq\delta s_0 p_0/p_1$ takes the form
$$
(An-B-A)\left(B(A+B)+(A-B)\left(\sqrt{n (A + B + n)}-n\right)\right)\geq 0,
$$
which is true when $n\geq (A+B)/A$. This means that we can use the third point of Proposition \ref{prop borne inf}. 
We can compute that $p_{i+1}w_{i+1}/p_i-w_i-\delta s_i$ is equal to
\begin{align*}
&\left(B (A+B) + (A - B) \left(\sqrt{n (A + B + n)}-n\right)\right)\\
   &\left((-1 + B) B i (1 + i) + 
   A^2 (i - n) (1 + i - n) + A (i - n) ((-1 + 2 B) (1 + i) + n)\right)\\
   &\left((A + B)^3 (1 + i) n (A + B + n) (B + n-i-1)\right)^{-1}
\end{align*}
with $i=0,\ldots,n-1$. 
This expression has the same sign as the polynomial
$$
g_0(i):=(-1 + B) B i (1 + i) + 
   A^2 (i - n) (1 + i - n) + A (i - n) ((-1 + 2 B) (1 + i) + n)
$$
for $i\in [0,n-1]$. 
The extremum of $g_0$ is 
$$
-\frac{1}{4} (-1 + A + B) (A + B) - A B n - \frac{A B n^2}{A + B}
$$
and is reached in $i= A n/(A + B)-1/2$. Remark that the extremum of $g_0$ is negative as soon as $n\geq 1/(4\min(A,B))$ and it is reached in $[0,n-1]$ as soon as $n\geq (A+B)/(2\min(A,B))$. Hence, when $n$ is large enough, there are three possibilities. Either $g_0$ doesn't change sign in $[0,n-1]$, either $g_0$ changes sign only once, or $g_0$ is positive then negative then positive in $[0,n-1]$. In the three cases, the conditions of Proposition \ref{prop borne inf} are verified. 

We can show that $\pi_i\leq 1$ for all $i=0,\ldots,n$ with Proposition \ref{prop borne sup} in a similar way. The necessary condition $w_{0}\leq \delta s_{0}$ becomes 
$$
A(A+B)+(B-A)\left(\sqrt{n (A + B + n)}-n\right)\geq 0,
$$
which is true as we have already shown, and the condition $w_{\ell-1}\geq-\delta s_{\ell} p_{\ell}/p_{\ell-1}$ takes the form
$$
(Bn-B-A)\left(A(A+B)+(B-A)\left(\sqrt{n (A + B + n)}-n\right)\right)\geq 0,
$$
which is true when $n\geq (A+B)/B$. This means that we can use the third point of Proposition \ref{prop borne sup}. 
We can compute that $w_i-w_{i-1}p_{i-1}/p_i-\delta s_i$ 
has the same sign as the polynomial
$$
g_1(i):=(1 - A - B) (A + B) i^2 - 
 i (1 - A - B) (A + B+2A n) + ( 1-A) A (1+n)n 
$$
for $i\in [1,n]$. 
The extremum of $g_1$ is 
$$
\frac{1}{4} (-1 + A + B) (A + B) + A B n + \frac{A B n^2}{A + B}
$$
and is reached in $i= A n/(A + B)+1/2$. Remark that the extremum of $g_1$ is positive as soon as $n\geq 1/(4\min(A,B))$ and it is reached in $[1,n]$ as soon as $n\geq (A+B)/(2\min(A,B))$. Hence, when $n$ is large enough, there are three possibilities. Either $g_1$ doesn't change sign in $[1,n]$, either $g_1$ changes sign only once, or $g_1$ is negative then positive then negative in $[1,n]$. In the three cases, the conditions of Proposition \ref{prop borne sup} are verified. 
\end{proof}

\begin{proof}[End of proof of Proposition \ref{semici}]
    All that remains is to prove the weights $\pi_i$ lie in $[0,1]$.  On the one hand, since $4 i^2 - 4 in + n + 2$ is a polynomial of order 2 in $i$, it is negative between its roots $(n-\sqrt{n^2-n-2})/2$ and $(n+\sqrt{n^2-n-2})/2$. It is easy to see that $(n-\sqrt{n^2-n-2})/2\leq 1$ and $n-1\leq (n+\sqrt{n^2-n-2})/2$ for all $n\geq 2$. So,
we have $\pi_i\geq 0$ for all $i=1,\ldots,n-1$. On the other hand, the inequality $\pi_i\leq 1$ is equivalent to 
\begin{equation*}
(-1 + i) (-1 + 2 i - 2 n)\leq 0,
\end{equation*}
which is true for all $i=1,\ldots,n-1$. 
\end{proof}

\begin{proof}[Proof of Proposition \ref{prop:moran}]
 By Corollary 2.3 and Proposition 2.4 in \cite{fulman2023beta} we know that 
   $$\E[Y]= \frac{2na}{a+b} \mbox{  and }\Var[Y]=\frac{8n^3 a b}{(a+b)^2(2n(a+b+1)-a-b)}$$
  (our $Y$ is $2n W$ in the notations from \cite{fulman2023beta}). The standardization of $X$ follows from \eqref{eq:autosta}. Unfortunately we have not been able to obtain an expression for the weights $\pi_i$; we resort to using the propositions from Section \ref{sn:more}.  We begin by showing that $\pi_i\geq 0$ for all $i=0,1,\ldots, 2n$. The necessary condition of Proposition \ref{prop borne inf} is $\Var[Y]\tau_q(x_{2n})\leq \Var[Z](2n-\E[Y])$, which is equivalent to
$$
\frac{(a - b) b}{(a + b)^3} + \frac{b}{4 (1 + a + b) n^2} - \frac{ b (a + a^2 + 2 a b + b^2)}{2 (a + b)^2 (1 + a + b) n}
+  \sqrt{1-\frac{a + b}{2 (1 + a + b) n} } \frac{(b - a) b}{(a + b)^3}\leq 0.
$$
As 
$$
\left(1- \sqrt{1-\frac{a + b}{2 (1 + a + b) n} }\right)n\to (a+b)/(4(1+a+b)),
$$
this expression behaves as $b (a + b - (1+2a + 2b) n)/(4 (a + b) (1 + a + b) n^2)$ when $n\to \infty$, which is indeed negative. The condition $w_1-\delta s_0 p_0/p_1\geq 0$ is equivalent to 
\begin{align*}
&\frac{(a + b)^4 - 2 (a + b)^3 (1 + 2 a + b) n +  4 (a + b)^2 (b + a (a + b)) n^2}{16 (1 + a + b) n^4} \\
&+  \frac{8 (a + b) (a + 2 a^2 - b (1 + b)) n^3 - 16 a (a - b) (1 + a + b) n^4}{16 (1 + a + b) n^4} \\
&+ \sqrt{1-\frac{a + b}{2 (1 + a + b) n} } \frac{(a - b) (-b + a (-1 + 2 n))}{2 n}\geq 0.
\end{align*}
When $n\to\infty$, this expression behaves as
$$
\frac{(a + b)^2 - 2 (a + b) (1 + 2 a + b) n + 2 (1 + 2 a) (a + b) n^2 + 
 4 a n^3}{16 (a + b) (1 + a + b) n^4}
$$
which is positive for $n$ large enough. Thus, we can use statement (3) of Proposition \ref{prop borne inf}. We can compute that, as soon as $2n\geq a+b$, $p_{i+1}w_{i+1}/p_i-w_i-\delta s_i$ has the same sign as a polynomial $g$ of order 3 with 
\begin{align*}
g'''=\frac{(a + b)^4 (2n-1 + 1/(1 + a + b))}{8 n^3}\geq 0.
\end{align*}
Hence, $g$ is either convex or concave or concave then convex.
The leading terms of $g(0)$ are
\begin{align*}
&\frac{a}{1 + a + b} \left(2 (-a + a^3 + 2 a^2 (-1 + b) + b - b^3) n - 4 (-1 + a) (a - b) (1 + a + b) n^2\right) \\
&+ \sqrt{1-\frac{a + b}{2 (1 + a + b) n} }a (a - b)  (-2 (-1 + b) n + 4 (-1 + a) n^2)
\end{align*}
This expression behaves as
$$
\frac{a (a + b) ((a - b) (-1 + b) + 2 (-1 + a) (a + b) n)}{2 (1 + a + b)}
$$
when $n\to\infty$. Hence, for $n$ large enough, $g(0)$ is negative when $a<1$ and positive when $a>1$. 

Hence, if $a>1$ and for $n$ large enough, $g(0)\geq 0$ and $g$ is either convex or concave or concave then convex. In all cases, $g$ can change sign at most twice. The conditions of Proposition \ref{prop borne inf} are thus verified and we can conlude that $p_i\geq 0$ for all $i=0,1,\ldots,2n$.

Now, we show that $\pi_i\leq 1$ for all $i=0,1,\ldots,2n$ with Proposition \ref{prop borne sup} in a similar way. The necessary condition of Proposition \ref{prop borne sup} is $\Var[Y]w(x_{0})\leq \Var[Z]\E[Y]$, which is equivalent to
\begin{align*}
&\frac{a ((a + b)^3 - 2 (a + b) (b + (a + b)^2) n - 
    4 (a - b) (1 + a + b) n^2)}{4 (a + b)^3 (1 + a + b) n^2} \\
&+ \sqrt{1-\frac{a + b}{2 (1 + a + b) n}} \frac{
   a (a - b) }{(a + b)^3}\leq 0.
\end{align*}
This expression behaves as $a (a + b - (1+2a +2b) n)/(4 (a + b) (1 + a + b) n^2)$ when $n\to \infty$, which is indeed negative. The condition $w_{2n-1}\geq -\delta s_{2n} p_{2n}/p_{2n-1}$ is equivalent to 
\begin{align*}
&\frac{-(a + b)^4 + 2 (a + b)^3 (3 + a + 2 b) n - 4 (a + b)^2 (2 + a (3 + b) + b (6 + b)) n^2}{16 (a + b)^3 (1 + a + b) n^4}\\
&+ \frac{  8 (a + b) (a + a^2 + 4 a b + b (3 + 4 b)) n^3 + 
  16 (a - b) b (1 + a + b) n^4}{16 (a + b)^3 (1 + a + b) n^4} \\
 & + \sqrt{1-\frac{a + b}{2 (1 + a + b) n}}\frac{(a - b) (a + b - 2 b n)}{2 (a + b)^3n}\geq 0.
\end{align*}
When $n\to\infty$, this expression behaves as
$$
-\frac{(a + b)^2 - 2 (a + b) (3 + a + 2 b) n + 
  2 (4 + a (7 + 2 b) + b (11 + 2 b)) n^2 - 4 (4 + 4 a + 5 b) n^3}{
 16 (a + b) (1 + a + b) n^4}
$$
which is positive for $n$ large enough.  Thus, we can use statement (3) of Proposition \ref{prop borne sup}. We can compute that, as soon as $2n\geq a+b$, $w_{i}-w_{i-1}p_{i-1}/p_i-\delta s_i$ has the same sign as a polynomial $g$ of order 3 with 
\begin{align*}
g'''=-\frac{(a + b)^4 (a + b - 2 (1 + a + b) n)}{8 (1 + a + b) n^3}\geq 0.
\end{align*}
Hence, $g$ is either convex or concave or concave then convex. 
We have $g(2n)$ equal to
\begin{align*}
   &-\frac{b ((a + b)^4 - 2 (a + b)^2 (1 + a + b) (a + 2 b) n + 
    4 (a + b) (-a + (2 + a + a^2) b + 2 (1 + a) b^2 + b^3) n^2 )}{4 (1 + a + b) n^2}\\
    &-\frac{ b(8 (a - a^3 + 2 a b^2 + b (-1 + (-2 + b) b)) n^3 - 
    16 (-1 + b) (-a + b) (1 + a + b) n^4)}{4 (1 + a + b) n^2}\\
    &- b (-a + b) (-a (1 + 2 n) + (-1 + 2 n) (b + 2 (-1 + b) n)) \sqrt{1-\frac{a + b}{2 (1 + a + b) n} } 
\end{align*}
When $n\to\infty$, this expression behaves as
$$
-\frac{b (a + b)^2 ((a + b)^2 - (a + b) (3 + 2 a + 4 b) n + 
   2 (1 + a + 2 a b + 2 b (1 + b)) n^2 + 4 (-1 + b) n^3)}{4 (1 + a + 
   b) n^2}
$$
Hence, for $n$ large enough, $g(2n)$ is negative when $b>1$ and positive when $b<1$. 

Therefore, if $b>1$ and for $n$ large enough, $g(2n)\leq 0$ and $g$ is either convex or concave or concave then convex. This means that $g$ can change sign at most twice, and is negative on the boundaries in such case. The conditions of Proposition \ref{prop borne sup} are thus verified and we can conclude that $\pi_i\leq 1$ for all $i=0,1,\ldots,2n$. The result follows by Theorem \ref{theo2dform}.
\end{proof}

\end{document}